\providecommand{\U}[1]{\protect \rule{.1in}{.1in}}
\numberwithin{equation}{section}
\newtheorem{Theorem}{Theorem}[section]
\newtheorem{definition}[Theorem]{Definition}
\newtheorem{lemma}[Theorem]{Lemma}
\newtheorem{proposition}[Theorem]{Proposition}
\newtheorem{remark}[Theorem]{Remark}
\newtheorem{assumption}[Theorem]{Assumption}
\newtheorem*{assumption*}{Assumption}
\begin{document}
\title{Infinite horizon McKean-Vlasov FBSDEs and applications to mean field  control problems}
\author{Tianjiao Hua \thanks{School of Mathematical Sciences, Shanghai Jiao Tong University, China (htj960127@sjtu.edu.cn)}
\and
Peng Luo \thanks{School of Mathematical Sciences, Shanghai Jiao Tong University, China (peng.luo@sjtu.edu.cn)}}
\maketitle
\begin{abstract}
    In this paper, we study a class of infinite horizon fully coupled McKean-Vlasov forward-backward stochastic differential equations (FBSDEs).  We propose a generalized monotonicity condition involving two flexible functions. Under this condition, we establish the well-posedness results for infinite horizon McKean-Vlasov FBSDEs by the method of continuation, including the unique solvability, an estimate of
the solution, and the related continuous dependence property of the solution on the coefficients. Based on the solvability result, we study an infinite horizon mean field control problem. Moreover, by choosing appropriate form of the flexible functions, we can eliminate the different phenomenon between the linear-quadratic (LQ) problems on infinite horizon and finite horizon proposed in Wei and Yu (SIAM J. Control Optim. 59: 2594–2623, 2021). 
\end{abstract}
\textbf{Key words}:  McKean-Vlasov FBSDE, infinite horizon, monotonicity condition, mean field control.\\
\textbf{MSC-classification}: 60H10, 93H20, 49N80.
\section{Introduction}  
Mean field control problems have, in the recent years, drawn the attention of the applied mathematics community. Being an extension of the classical optimal control, it has been studied from different angles. The first one is to use the so-called dynamic programming principle (DPP). Compared with the classical control problems, the presence of the distribution of the controlled process in the coefficients brings additional difficulty. One can refer to \cite{2018DPP,Pham2017dpp,pham2018dpp} for related research on establishing DPP for mean field control problems. A second way is based on the Pontryagin's maximum principle. This approach has been successfully developed in many literature, see \cite{carmonamkfbsde,lijuan,Andersson2011,meanfield2018book1}. Recently, Bayraktar and Zhang \cite{bayraktar2023solvability} studied a  mean field control problem on infinite horizon using FBSDE techniques, where the state volatility is a constant.

In this paper, we consider the following infinite horizon mean field control problem: Minimize
\begin{equation}\label{cost functional}
J(\alpha):=\mathbb{E}\left[\int_0^{\infty} e^{2 K t} f\left(t, X_t, \mathcal{L}\left(X_t\right), \alpha_t\right) d t\right]    
\end{equation}
subject to
\begin{equation}\label{state dynamics}
  \left\{  \begin{aligned}
dX_t&=b\left(t, X_t, \mathcal{L}\left(X_t\right), \alpha_t\right) dt+\sigma(t,X_{t},\mathcal{L}\left(X_t\right),\alpha_{t}) d W_t, \quad t\in[0,\infty),\\
X_0&=\xi, 
\end{aligned}\right.
\end{equation}
where $K\in \mathbb{R}$ is a constant, $W_{t}$ is a $d$-dimensional Brownian motion, $(b,f,\sigma):[0,\infty)\times \mathbb{R}^{n} \times \mathcal{P}_{2}(\mathbb{R}^{n}) \times A \rightarrow \mathbb{R}^{n}\times \mathbb{R} \times \mathbb{R}^{n\times d}$ are measurable mappings, and $\alpha=\left(\alpha_t\right)_{t
\geq 0 }$ is a progressively measurable process with values in a measurable space $(A, \mathcal{A})$. It can be noticed that we introduce a term $e^{2Kt}$ to "suppress" the growth of the running cost function $f(t,X(t),\mathcal{L}(X_{t}),\alpha_{t})$, thereby obtaining the well-posedness of the cost functional \eqref{cost functional}. In the literature \cite{yu2021,wei2023infinite,bayraktar2023solvability}, they used the same technique when studying infinite horizon control problems. As indicated in \cite{yu2021}, the choice of parameter $K$ depends on the properties of the coefficients. Specifically, if the monotonicity of $b$ with respect to $x$ is sufficiently negative, the parameter $K$ may take a positive real number. Therefore, the appearance of parameter $K$ is friendly with various models and we can always identify the parameter $K$ according to the the intrinsic properties of coefficients and formulate the control problems.

Following the probabilistic approach to finite horizon mean field  control problems, we establish an appropriate form of the Pontryagin's maximum principle for the infinite horizon case and then the infinite horizon mean field control problem \eqref{cost functional}-\eqref{state dynamics} is reduced to solving an infinite horizon McKean-Vlasov FBSDE, which is also called a Hamiltonian system. Motivated by this, we aim to establish the well-posedness of a more general form of infinite horizon coupled McKean-Vlasov FBSDEs. In detail, we consider the following infinite horizon McKean-Vlasov FBSDE:
\begin{equation}\label{FBSDE}
\left\{\begin{aligned}
d X_t&=B\left(t, X_t, Y_t, Z_{t},\mathcal{L}\left(X_t, Y_t, Z_{t}\right)\right)  dt+\sigma\left(t, X_t, Y_t, Z_{t},\mathcal{L}\left(X_t, Y_t, Z_{t}\right)\right)  d W_t, \quad t\in[0,\infty),\\
d Y_t&=F\left(t, X_t, Y_t, Z_{t}, \mathcal{L}\left(X_t, Y_t, Z_{t}\right)\right)  dt+Z_t  d W_t, \quad t \in [0,\infty), \\
X_0&=\xi,
\end{aligned}\right.    
\end{equation}
where $W_{t}$ is a d-dimensional Brownian motion,  $X, Y, Z$ take values in $\mathbb{R}^n, \mathbb{R}^m, \mathbb{R}^{m \times d}$, $B, \sigma,F$ are progressively measurable functions with appropriate dimensions, $\xi$ is an $\mathcal{F}_{0}$-measurable square integrable random variable and  $\mathcal{L}\left(X_t, Y_t, Z_t\right)$ denote the probability measures induced by $\left(X_t, Y_t, Z_t\right)$. We aim to look for solutions $(X,Y,Z)$ to \eqref{FBSDE} in $L^{2,K}_{\mathbb{F}}(0,\infty;\mathbb{R}^{n+m+m\times d})$, where $K\in \mathbb{R}$ and $L^{2,K}_{\mathbb{F}}(0,\infty;\mathbb{R}^{n})$ is the Hilbert space of all $\mathbb{R}^{n}$-valued adapted processes $\nu_{t}$ such that
 \begin{equation}\label{solution space}
     \mathbb{E}\int_{0}^{\infty} |e^{Kt}\nu_{t}|^{2} dt < \infty.
\end{equation}
 
Finite horizon classical FBSDEs were first investigated by Antonelli \cite{antonelli1993backward} and a local existence and uniqueness result was obtained. For the global solvability results, there exist four main methods: the method of contraction mapping used by Pardoux and Tang \cite{tang1999}, four-step scheme approach introduced by Ma, Protter and Yong \cite{ma1994}, continuation method initiated by Hu and Peng \cite{hu1995}, Peng and Wu \cite{peng1999fully} and improved by Yong \cite{yong1997,yong2010}, random decoupling field introduced by Ma et al. \cite{ma2015} and extended by Fromm and Imkeller \cite{fromm2013existence} and Hua and Luo \cite{hua2022unified}. For more detailed results on finite horizon FBSDEs, one can refer to the monograph by Ma and Yong \cite{mabook2007}. The research on finite horizon mean field FBSDEs builds upon these methods and  further developments have been made, see \cite{bensoussan2015well,carmonamkfbsde,meanfield2018book1,carmonamffbsde,hua2023well}.

In \cite{peng2000}, Peng and Shi, for the first time, investigated fully coupled classical infinite horizon FBSDEs by the method of continuation. Later, Wu \cite{wu2003fully} studied this problem in some different monotonicity framework. Yin \cite{yin2008solutions,yin2011forward} studied the same issue by the method of contraction mapping. Shi and Zhao \cite{shi2020} extended the results \cite{peng2000} to a larger space and studied the connection of infinite horizon FBSDEs with corresponding PDEs. Besides, Yu \cite{yu2017} investigated infinite horizon FBSDEs driven by both Brownian motions and Poisson processes. Bayraktar and Zhang \cite{bayraktar2023solvability} extended the infinite horizon FBSDEs results to a type of infinite horizon McKean-Vlasov FBSDEs where the coefficients $B,F$ depend on $(X,Y,\mathcal{L}(X,Y))$ and the coefficient $\sigma$ is a constant. Recently, Wei, Xu and Yu \cite{wei2023infinite} studied a kind of infinite horizon linear mean field type FBSDEs with jumps.

In this paper, we establish an existence and uniqueness result and a pair of estimates for the solutions to infinite horizon McKean-Vlasov FBSDE \eqref{FBSDE} with the method of continuation.
 The key point of the method of continuation lies in proposing suitable motononicity condition. For the finite horizon McKean-Vlasov FBSDEs, Bensoussan et al. \cite{bensoussan2015well} extended the results in Peng and Wu \cite{peng1999fully} to include mean-field terms by proposing the following motononicity condition:
$$
 \begin{aligned}
& \mathbb{E}\left[\left \langle \Gamma\left(t ; \Theta_{1}, \mathcal{L}(\Theta_{1})\right)-\Gamma\left(t ; \Theta_2, \mathcal{L}(\Theta_{2})\right), \Theta_1-\Theta_2\right\rangle \right] \\
\leq & -\beta_1 \mathbb{E}\left[\left|G (X_{1}-X_{2})\right|^2\right]-\beta_2\left(\mathbb{E}\left[\left|G^{\top} (Y_{1}-Y_{2})\right|^2\right]+\mathbb{E}\left[\left|G^{\top}(Z_{1}-Z_{2})\right|^2\right]\right),
\end{aligned}
$$
for any $\Theta_{1} = (X_{1},Y_{1},Z_{1}), \Theta_{2} = (X_{2},Y_{2},Z_{2})\in L^{2}(\mathbb{R}^{n+m+m \times d})$, where $\Gamma=(G^{\top}F,GB,G\sigma)$, $G$ is a matrix and $\beta_{1},\beta_{2}$ are non-negative constants with $\beta_{1}+\beta_{2}>0$. 
Recently, Reisinge et al. \cite{reisinger2020path} proposed a generalized monotonicity condition by introducing two flexible measurable functions $\phi_{1}$ and $\phi_{2}$:
\begin{equation}\label{finite general monotonicity}
 \begin{aligned}
& \mathbb{E}\left[\left\langle \Gamma\left(t ; \Theta_{1}, \mathcal{L}(\Theta_{1})\right)-\Gamma\left(t ; \Theta_2, \mathcal{L}(\Theta_{2})\right), \Theta_1-\Theta_2\right\rangle \right] \\
& \leq-\beta_1 \phi_1\left(X_1, X_2\right)-\beta_2 \phi_2\left(t, \Theta_1, \Theta_2, \mathcal{L}(\Theta_{1}),\mathcal{L}(\Theta_{2})\right).
\end{aligned}    
\end{equation}
For the infinite horizon classical FBSDEs, Wei and Yu \cite{yu2021} proposed the following domination-monotonicity condition:
\begin{equation}\label{yu monotonicity}
\left\{
\begin{aligned}
&|g(s, x, y_{1}, z_{1})-g(s, x, y_{2}, z_{2})|   \leq\frac{1}{\nu}|A(s)(x_{1}-x_{2})|, \\
   & |F(s, x, y_{1}, z_{1})-F(s, x, y_{2}, z_{2})| \leq \frac{1}{\mu}|B(s) (y_{1}-y_{2})+C(s) (z_{1}-
{z}_{2})|, \\
  &\langle \Gamma(s, \theta_{1})-\Gamma(s, \theta_{2}), \theta_{1}-\theta_{2}\rangle+2 K\langle x_{1}-x_{2}, y_{1}-y_{2}\rangle \\
  & \quad  \leq-\nu |A(s)(x_{1}-x_{2})|^{2}-\mu|B(s) (y_{1}-y_{2})+C(s) (z_{1}-z_{2})|^2,
\end{aligned}\right.
\end{equation}
for any $\theta_{1} = (x_{1},y_{1},z_{1}),\ \theta_{2} = (x_{2},y_{2},z_{2})\in \mathbb{R}^{n+n+n \times d}$,  where $g = B,\sigma$, and $A(\cdot),B(\cdot),C(\cdot)$ are three bounded matrix-valued stochastic processes and $\nu,\mu$ are non-negative constants with $\nu+\mu>0$.
Motivated by above conditions, we introduce an infinite horizon generalized monotonicity condition involving two flexible functions $\phi_{1}$ and $\phi_{2}$ (see Assumption (H2)(i)). Moreover, different from finite horizon case, two additional monotonicity conditions for the coefficients $B$ and $F$ are proposed (see Assumption (H2)(ii)). 

Our work is closely related to \cite{bayraktar2023solvability} and \cite{yu2021}. We provide a comparison with their results and summarize our main innovations as follows:
 \begin{itemize}
     \item [(i)] We study a more general coupled (McKean-Vlasov) FBSDE in comparison with \cite{bayraktar2023solvability} and \cite{yu2021}.
     \item [(ii)] The introduction of the two functions $\phi_{1}$ and $\phi_{2}$ makes our conditions more general and flexible than those considered in \cite{bayraktar2023solvability} and \cite{yu2021}, thereby improving application scope of our solvability results. In particular, by selecting appropriate functions $\phi_{1}$ and $\phi_{2}$, our conditions can be reduced to those proposed in \cite{bayraktar2023solvability} and \cite{yu2021} (see Remark \ref{remark flexicity monotonicity} (ii)).
     \item [(iii)] As mentioned above, we need to identify the parameter $K$ based on the intrinsic properties of the coefficients, which is an additional consideration compared to finite horizon situation. When studying infinite horizon mean field control problems, under our generalized monotonicity condition, once the mean field control problem is well-defined for parameter $K$ determined by the coefficients, then it is solvable without additional constraint on $K$. In particular, we can solve the mean field control problem considered in \cite{bayraktar2023solvability} under a rather weaker constraint on $K$ (see Remark \ref{K}). Moreover, for stochastic LQ control problems on infinite horizon, \cite{yu2021} stated that there exists a phenomenon that whether the cross term coefficient $S(\cdot)$ in the cost functional is equal to zero or not may bring different results to the solvalibility of the LQ problem. With help of our theoretical result of FBSDE, we can eliminate this phenomenon under strictly convex condition (see Remark \ref{S}).
 \end{itemize}

The rest of the paper is organized as follows. In section 2, we present the necessary notations, concepts and study the well-posedness of the infinite horizon McKean-Vlasov SDEs and McKean-Vlasov BSDEs as a basis of the following study. In section 3, under the infinite horizon generalized monotonicity condition, we obtain the existence, uniqueness and related estimates of the solutions for the infinite horizon McKean-Vlasov FBSDEs with the method of continuation. Finally, in section 4, we investigate an infinite horizon mean field control problem by applying the obtained FBSDE result. 

\section{Preliminaries}
 Let $(\Omega, \mathcal{F}, \mathbb{P})$ be a complete probability space on which is defined a $d$-dimensional Brownian motion $W_{t}$, and $\mathbb{F}=\left\{\mathcal{F}_t\right\}_{t \geq 0}$ be the natural filtration of $W$ augmented with an independent $\sigma$-algebra $\mathcal{F}_{0}$. For any given $n,m \in \mathbb{N}$ and $x \in \mathbb{R}^n$, we denote by $\mathbb{I}_n$ the $n \times n$ identity matrix, by $\mathbb{R}^{n \times m}$ the Euclidean space of all $(n \times m)$ real matrices, especially, $\mathbb{R}^n=\mathbb{R}^{n \times 1}$, by $0_n$ the zero element of $\mathbb{R}^n$ and by $\delta_x$ the Dirac measure supported at $x$. We denote $\langle\cdot, \cdot\rangle$ and $|\cdot|$ to be the respective usual inner product and norm in Euclidean space, and for any $A,B \in \mathbb{R}^{n\times m}$, we define $\left \langle A,B \right \rangle  \triangleq \text{tr}(A^{\top}B)$, $|A|= \{\text{tr}(A^{\top}A)\}^{\frac{1}{2}}$, where the superscript $\top$ denotes the transpose of a vector or matrix. In this paper we use the operator norm of matrices:
 $$
\|A\|:=\sup _{0 \neq x \in \mathbb{R}^m} \frac{|A x|}{|x|}, \quad \text { for any } A \in \mathbb{R}^{n\times m} \text {. }
$$
Now we introduce some spaces which will be used in our following analysis.
For any $t\in[0, \infty)$ and constant $K \in \mathbb{R}$,
\begin{itemize}
\item $L_{\mathcal{F}_t}^2\left(\Omega ; \mathbb{R}^n\right)$ is the set of $\mathbb{R}^n$-valued $\mathcal{F}_t$-measurable random variables $\xi$ such that
$$
\|\xi\|_{L^2}:=\mathbb{E}\left[|\xi|^2\right]^{\frac{1}{2}}<\infty;
$$
    \item $L^{2,K}_{\mathbb{F}}(t,\infty;\mathbb{R}^{n})$ is that set of $\mathbb{R}^{n}$-valued $\mathbb{F}$-progressively measurable processes $f(\cdot)$ such that 
   \begin{equation}\label{space}
\|f(\cdot)\|_K:=\mathbb{E}\left[\int_t^{\infty} \left|e^{K s}f(s)\right|^2 d s\right] ^{\frac{1}{2}}< \infty;       
\end{equation}
\item  $L^{\infty}\left(t, \infty ; \mathbb{R}^{n \times m}\right)$ is the set of all Lebesgue measurable functions $A:[t, \infty) \rightarrow$ $\mathbb{R}^{n \times m}$ such that
$$
\|A(\cdot)\|_{\infty}:=\operatorname{esssup}_{s \in[t, \infty)}\|A(s)\|<\infty.
$$
\end{itemize}
Clearly, for any $K_1<K_2$, we have $L_{\mathbb{F}}^{2, K_2}\left(t, \infty ; \mathbb{R}^n\right) \subset L_{\mathbb{F}}^{2, K_1}\left(t, \infty ; \mathbb{R}^n\right)$, i.e., the sequence of spaces $\left\{L_{\mathbb{F}}^{2, K}\left(t, \infty ; \mathbb{R}^n\right)\right\}_{K \in \mathbb{R}}$ is decreasing in $K$.

In the sequel, we will use the notation $\mathcal{L}(\Theta)$ to denote the law of the random variable $\Theta$. Let $\mathcal{W}_2$ denote 2-Wassertein's distance on $\mathcal{P}_2\left(\mathbb{R}^n\right)$ defined by
\begin{equation}
\mathcal{W}_2\left(\mu_1, \mu_2\right) \triangleq \inf \left\{\left[\int_{\mathbb{R}^n \times \mathbb{R}^n}|x-y|^2 \pi(d x, d y)\right]^{\frac{1}{2}}, \pi \in \mathcal{P}_2\left(\mathbb{R}^n \times \mathbb{R}^n\right) \text { with marginals } \mu_1 \text { and } \mu_2\right\}.    
\end{equation}
 It is obvious from its definition that
$$
\mathcal{W}_{2}\left(\mu_1, \mu_2\right) \leq\mathbb{E}\left[\left|X_1-X_2\right|^2\right]^{\frac{1}{2}},
$$
where $X_1$ and $X_2$ are $n$-dimensional random vectors that follow the distributions $\mu_1$ and $\mu_2$ respectively.

For a function defined on space of measures, its Lipschitz continuity and differentiability upon the measure variable $\mu$ is understood in the sense of $2$-Wassertein distance and L-differentiability, respectively.

Now we briefly introduce the structure of L-derivative for functions defined on space of measures and we refer the readers to \cite[Chapter 5]{meanfield2018book1} for details. Let $\tilde{\Omega}$ be a Polish space and $\tilde{\mathbb{P}}$ an atomless measure over $\tilde{\Omega}$. 
 The notion of differentiability is based on the lifting of functions $\mathcal{P}_2\left(\mathbb{R}^d\right) \ni \mu \mapsto H(\mu)$ into functions $\tilde{H}$ defined on the Hilbert space $L^2(\tilde{\Omega} ; \mathbb{R}^d)$ over some probability space $(\tilde{\Omega}, \tilde{\mathcal{F}}, \tilde{\mathbb{P}})$ by setting $\tilde{H}(\tilde{X})=H(\tilde{\mathbb{P}}_{\tilde{X}})$ for $\tilde{X} \in L^2(\tilde{\Omega} ; \mathbb{R}^d)$. Then a function $H$ is said to be differentiable at $\mu_0 \in \mathcal{P}_2(\mathbb{R}^d)$ if there exists a random variable $\tilde{X}_0$ with law $\mu_0$ such that the lifted function $\tilde{H}$ is Fr\'{e}chet differentiable at $\tilde{X}_0$. Whenever this is the case, the Fr\'{e}chet derivative of $\tilde{H}$ at $\tilde{X}_0$ can be viewed as an element of $L^2(\tilde{\Omega} ; \mathbb{R}^d)$, denoted by $D\tilde{H}(\tilde{X}_{0})$, by identifying $L^2(\tilde{\Omega} ; \mathbb{R}^d)$ and its dual. It can be shown that there exists a measurable function : $\partial_\mu H(\mu_0): \mathbb{R}^d \rightarrow \mathbb{R}^d$ such that $\partial_\mu H(\mu_0)(\tilde{X}_0)=D \tilde{H}(\tilde{X}_0),\ \mathbb{P}$-a.s. Therefore, we define the derivative of $H$ at $\mu_0$ as the 
measurable function $\partial_\mu H(\mu_0)$, which satisfies
$$
H(\mu)=H\left(\mu_0\right)+\tilde{\mathbb{E}}\left[\partial_\mu H\left(\mu_0\right)(\tilde{X}_0) \cdot(\tilde{X}-\tilde{X}_0)\right]+o(\|\tilde{X}-\tilde{X}_0\|_2),
$$
where $\mathcal{L}(\tilde{X})=\mu, \mathcal{L}(\tilde{X}_0)=\mu_0$.

As a basis of the investigation of infinite horizon McKean-Vlasov FBSDEs, we give the well-posedness of infinite horizon McKean-Vlasov SDEs and infinite horizon McKean-Vlasov BSDEs in the rest of this section. The corresponding classical infinite SDEs and BSDEs have been studied in \cite{yu2021} and it can be observed that our conditions can degenerate to their conditions when there are no mean field terms.
\subsection{Infinite horizon McKean-Vlasov SDEs}
For $t\in[0, \infty)$, consider the following infinite horizon McKean-Vlasov SDE:
\begin{equation}\label{mean field sde}
\left\{  \begin{aligned}
        dX_{s} & = b\left(s,X_{s},\mathcal{L}(X_{s})\right)  ds+\sigma\left(t,X_{s},\mathcal{L}(X_{s})\right) dW_{s},\quad  s\in [t,\infty),\\
        X_{t} & = x_{t},    \end{aligned}\right.
\end{equation}
where $b: [t,\infty)\times \Omega \times \mathbb{R}^{n} \times \mathcal{P}_{2}(\mathbb{R}^{n}) \rightarrow \mathbb{R}^{n}$ and $\sigma:[t,\infty)\times \Omega \times \mathbb{R}^{n} \times \mathcal{P}_{2}(\mathbb{R}^{n})\rightarrow \mathbb{R}^{n\times d}$ are measurable functions. We introduce the following assumptions.
\begin{assumption}\label{assumption x}
(i) $x_{t}\in L^{2}_{\mathcal{F}_{t}}(\Omega;\mathbb{R}^{n})$. For any $x\in \mathbb{R}^{n},\ \mu\in \mathcal{P}_{2}(\mathbb{R}^{n})$, the processes $b(\cdot,x,\mu)$ and $\sigma(\cdot,x,\mu)$ are $\mathbb{F}$-progressively measurable. Moreover, there exists a constant $K \in \mathbb{R}$ such that $b(\cdot, 0,\delta_{0_{n}}) \in L_{\mathbb{F}}^{2, K}\left(t, \infty ; \mathbb{R}^n\right)$ and $\sigma(\cdot, 0,\delta_{0_{n}}) \in L_{\mathbb{F}}^{2, K}\left(t, \infty ; \mathbb{R}^{n \times d}\right)$.\\
(ii) The functions $b(t, x, \mu)$, $\sigma(t,x,\mu)$ are uniformly Lipschitz in $(x, \mu)$, i.e., there exist positive constants $l_{bx},\ l_{b\mu},\ l_{\sigma x},\ l_{\sigma \mu}$ such that for any $x,x^{\prime} \in \mathbb{R}^{n}$, $\mu, \mu^{\prime} \in \mathcal{P}_2(\mathbb{R}^{n})$ and almost all $(s,\omega) \in [t,\infty)\times \Omega$,  
\begin{equation}
\begin{aligned}
     |b(s, x,\mu)-b(s, x^{\prime},\mu^{\prime})| & \leq l_{bx}|x-x^{\prime}|+l_{b\mu} \mathcal{W}_2\left(\mu, \mu^{\prime}\right),\\
     |\sigma(s, x,\mu)-\sigma(s, x^{\prime},\mu^{\prime})| &\leq l_{\sigma x}|x-x^{\prime}|+ l_{\sigma\mu} \mathcal{W}_2\left(\mu, \mu^{\prime}\right).
\end{aligned}
\end{equation}
 (iii) There exists a constant $\kappa_{x}\in \mathbb{R}$ such that for any $\mu \in \mathcal{P}_2(\mathbb{R}^{n})$, $x, x^{\prime} \in \mathbb{R}^{n}$ and almost all $(s,\omega)\in [t,\infty)\times \Omega$, it holds that
\begin{equation}\label{monotonicity x}
\left \langle
x-x^{\prime},b(s, x, \mu)-b\left(s, x^{\prime}, \mu\right) \right \rangle \leq-\kappa_{x}\left|x-x^{\prime}\right|^2.    
\end{equation}
\end{assumption}
It follows from the classical theory of McKean-Vlasov SDEs on finite horizon (see \cite{meanfield2018book1}), under Assumption \ref{assumption x} (i) (ii), McKean-Vlasov SDE \eqref{mean field sde} admits a unique solution on $[t,\infty)$. Furthermore, similar with the proof of \cite[Proposition 2.1]{yu2017}), we can easily get the following result.
\begin{lemma}\label{infinite value}
     Let Assumption \ref{assumption x} (i) (ii) hold. If the solution $X$ to McKean-Vlasov SDE \eqref{mean field sde} belongs to $L_{\mathbb{F}}^{2, K}\left(t, \infty ; \mathbb{R}^n\right)$, then we have
\begin{equation}
\lim _{T \rightarrow \infty} \mathbb{E}\left[\left|e^{K T}X_{T}\right|^2\right]=0.  \end{equation}
\end{lemma}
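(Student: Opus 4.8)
The plan is to show that $\mathbb{E}[|e^{KT}X_T|^2]$ converges as $T\to\infty$, and then to identify the limit as zero by using the integrability hypothesis $X\in L^{2,K}_{\mathbb{F}}(t,\infty;\mathbb{R}^n)$. The natural starting point is to apply It\^o's formula to the process $s\mapsto e^{2Ks}|X_s|^2$ on an interval $[t,T]$. This yields, after taking expectations (and noting that the stochastic integral against $dW_s$ has zero expectation once we verify the integrand is square-integrable, which follows from $X\in L^{2,K}_{\mathbb{F}}$ together with the Lipschitz bounds on $\sigma$),
\begin{equation}\label{eq:ito-expansion}
\mathbb{E}\bigl[e^{2KT}|X_T|^2\bigr]=\mathbb{E}\bigl[e^{2Kt}|x_t|^2\bigr]+\mathbb{E}\int_t^T e^{2Ks}\Bigl(2K|X_s|^2+2\langle X_s,b(s,X_s,\mathcal{L}(X_s))\rangle+|\sigma(s,X_s,\mathcal{L}(X_s))|^2\Bigr)\,ds.
\end{equation}

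Next I would argue that the integrand on the right-hand side of \eqref{eq:ito-expansion} is integrable over $[t,\infty)$, so that the right-hand side, viewed as a function of $T$, converges to a finite limit as $T\to\infty$. To see this, I use Assumption \ref{assumption x}(ii) to bound $|b(s,X_s,\mathcal{L}(X_s))|$ and $|\sigma(s,X_s,\mathcal{L}(X_s))|$ by an affine function of $|X_s|$ and $\mathcal{W}_2(\mathcal{L}(X_s),\delta_{0_n})\le \mathbb{E}[|X_s|^2]^{1/2}$, plus the terms $|b(s,0,\delta_{0_n})|$ and $|\sigma(s,0,\delta_{0_n})|$. Combining these with $|X_s|$ and applying Young's inequality, the whole integrand is dominated by a constant multiple of $|X_s|^2+|b(s,0,\delta_{0_n})|^2+|\sigma(s,0,\delta_{0_n})|^2$. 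Multiplying by $e^{2Ks}$ and integrating, each piece is finite: the first because $X\in L^{2,K}_{\mathbb{F}}$, and the latter two because of the integrability assumed in Assumption \ref{assumption x}(i). Hence $\mathbb{E}\int_t^\infty e^{2Ks}(\cdots)\,ds$ is absolutely convergent, and therefore $\lim_{T\to\infty}\mathbb{E}[e^{2KT}|X_T|^2]$ exists and is finite; call it $\ell\ge 0$.

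Finally I would show $\ell=0$. The function $T\mapsto \mathbb{E}\int_t^T e^{2Ks}|X_s|^2\,ds$ is nondecreasing and bounded above (since $X\in L^{2,K}_{\mathbb{F}}$), so $\int_t^\infty e^{2Ks}\mathbb{E}[|X_s|^2]\,ds<\infty$. If the limit $\ell=\lim_{T\to\infty}\mathbb{E}[e^{2KT}|X_T|^2]$ were strictly positive, then $\mathbb{E}[e^{2Ks}|X_s|^2]$ would be bounded below by $\ell/2$ for all large $s$, contradicting the finiteness of $\int_t^\infty e^{2Ks}\mathbb{E}[|X_s|^2]\,ds=\int_t^\infty \mathbb{E}[e^{2Ks}|X_s|^2]\,ds$. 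Therefore $\ell=0$, which is the desired conclusion.

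I expect the main technical obstacle to be the justification that the stochastic-integral term has vanishing expectation, i.e.\ that $s\mapsto e^{2Ks}\langle X_s,\sigma(s,X_s,\mathcal{L}(X_s))\rangle$ is genuinely integrable so that the local martingale is a true martingale on $[t,T]$; this requires combining the membership $X\in L^{2,K}_{\mathbb{F}}$ with the linear growth of $\sigma$ coming from Assumption \ref{assumption x}(ii). The rest is a monotone-limit argument that is essentially the same as in \cite{yu2017}, to which the statement already refers.
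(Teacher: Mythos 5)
Your proof is correct and follows essentially the same route as the one the paper relies on (it invokes the argument of Proposition 2.1 in the cited work of Yu): apply It\^{o}'s formula to $e^{2Ks}|X_s|^2$, use the Lipschitz/integrability assumptions to see that the drift-side integrand is absolutely integrable on $[t,\infty)$ so that $\mathbb{E}[e^{2KT}|X_T|^2]$ converges, and then use the integrability of $s\mapsto e^{2Ks}\mathbb{E}[|X_s|^2]$ to force the limit to be zero. The only step to tighten is the zero-expectation claim for the stochastic integral: square-integrability of $e^{2Ks}X_s^{\top}\sigma(s,X_s,\mathcal{L}(X_s))$ would need fourth moments of $X$, so one should instead localize with stopping times, or use the standard finite-horizon bound $\mathbb{E}\bigl[\sup_{s\in[t,T]}|X_s|^2\bigr]<\infty$ together with the Burkholder--Davis--Gundy inequality --- exactly the routine fix you anticipate in your closing remark.
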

Now, we give the main result for McKean-Vlasov SDE \eqref{mean field sde} as follows.
\begin{lemma}
    \label{x propostion}
     Let Assumption \ref{assumption x} holds. We further assume that $K<\kappa_{x}-\frac{(l_{\sigma x}+l_{\sigma \mu})^{2}}{2}-l_{b\mu}$. Then the solution $X$ to McKean-Vlasov SDE \eqref{mean field sde} belongs to $L_{\mathbb{F}}^{2, K}\left(t, \infty ; \mathbb{R}^n\right)$. Moreover, for any $\varepsilon>0$, we have the following estimate:
\begin{equation}\label{x estimate} 
\begin{aligned}
& \left(2 \kappa_{x}-2 K-2l_{b\mu}-(l_{\sigma x}+l_{\sigma \mu})^{2}-3 \varepsilon\right) \mathbb{E} \int_t^{\infty}\left|e^{K s}X_{s}\right|^2 d s \\
& \quad \leq \mathbb{E}\left\{\left|e^{K t}x_t\right|^2+\int_t^{\infty}\left[\frac{1}{\varepsilon}\left|e^{K s}b(s, 0,\delta_{0_{n}})\right|^2+\left(1+\frac{l_{\sigma x}^2+l_{\sigma \mu}^{2}}{\varepsilon}\right)\left|e^{K s}\sigma(s, 0,\delta_{0_{n}})\right|^2\right] d s\right\} .
\end{aligned}  
\end{equation} 
\end{lemma}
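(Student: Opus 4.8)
The plan is to apply It\^o's formula to the process $s \mapsto e^{2Ks}|X_s|^2$ and to exploit the dissipativity built into Assumption \ref{assumption x}. Since the finite-horizon McKean--Vlasov theory already guarantees that $X$ is square integrable on every compact interval, I would first work on $[t,T]$ for fixed $T<\infty$, where the stochastic integral $\int_t^T e^{2Ks}\langle X_s, \sigma(s,X_s,\mathcal{L}(X_s))\,dW_s\rangle$ is a genuine martingale (localizing by a stopping time if needed and removing it in the limit), so that taking expectations yields
\begin{equation*}
\mathbb{E}\big[e^{2KT}|X_T|^2\big]-\mathbb{E}\big[e^{2Kt}|x_t|^2\big]=\mathbb{E}\int_t^T e^{2Ks}\Big[2K|X_s|^2+2\langle X_s, b(s,X_s,\mathcal{L}(X_s))\rangle+|\sigma(s,X_s,\mathcal{L}(X_s))|^2\Big]\,ds.
\end{equation*}

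Next I would estimate the integrand. For the drift I split $b(s,X_s,\mathcal{L}(X_s))$ into $[b(s,X_s,\mathcal{L}(X_s))-b(s,0,\mathcal{L}(X_s))]+[b(s,0,\mathcal{L}(X_s))-b(s,0,\delta_{0_n})]+b(s,0,\delta_{0_n})$: the monotonicity \eqref{monotonicity x} controls the first piece by $-\kappa_x|X_s|^2$; the Lipschitz bound in $\mu$ together with $\mathcal{W}_2(\mathcal{L}(X_s),\delta_{0_n})\le \mathbb{E}[|X_s|^2]^{1/2}$ and Cauchy--Schwarz handle the second by $l_{b\mu}\mathbb{E}[|X_s|^2]$ after taking expectation; and Young's inequality absorbs the inhomogeneous term into $\varepsilon|X_s|^2+\tfrac{1}{\varepsilon}|b(s,0,\delta_{0_n})|^2$. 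For the diffusion I write $\sigma=[\sigma-\sigma(s,0,\delta_{0_n})]+\sigma(s,0,\delta_{0_n})$; the Lipschitz bounds give $\mathbb{E}[|\sigma-\sigma(s,0,\delta_{0_n})|^2]\le (l_{\sigma x}+l_{\sigma\mu})^2\mathbb{E}[|X_s|^2]$, while the cross term is split into two parts, each handled by Young's inequality, contributing $2\varepsilon\mathbb{E}[|X_s|^2]$ and $\tfrac{l_{\sigma x}^2+l_{\sigma\mu}^2}{\varepsilon}|\sigma(s,0,\delta_{0_n})|^2$. Collecting everything, the coefficient multiplying $\mathbb{E}\int_t^T e^{2Ks}|X_s|^2\,ds$ is exactly $2\kappa_x-2K-2l_{b\mu}-(l_{\sigma x}+l_{\sigma\mu})^2-3\varepsilon$, which is strictly positive once $\varepsilon$ is small, precisely because of the standing assumption $K<\kappa_x-\tfrac{(l_{\sigma x}+l_{\sigma\mu})^2}{2}-l_{b\mu}$.

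Finally, since $\mathbb{E}[e^{2KT}|X_T|^2]\ge 0$, I would drop this nonnegative terminal term to obtain, for every finite $T$, the bound \eqref{x estimate} with all integrals taken over $[t,T]$; its right-hand side is finite and uniform in $T$ because $x_t\in L^2_{\mathcal{F}_t}(\Omega;\mathbb{R}^n)$ and $b(\cdot,0,\delta_{0_n}),\sigma(\cdot,0,\delta_{0_n})\in L^{2,K}_{\mathbb{F}}$. Letting $T\to\infty$ and invoking monotone convergence then shows $\mathbb{E}\int_t^\infty|e^{Ks}X_s|^2\,ds<\infty$, i.e.\ $X\in L^{2,K}_{\mathbb{F}}(t,\infty;\mathbb{R}^n)$, and delivers \eqref{x estimate}; as a consistency check, Lemma \ref{infinite value} now applies and gives $\lim_{T\to\infty}\mathbb{E}[e^{2KT}|X_T|^2]=0$. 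The main obstacle I anticipate is the apparent circularity in the first step: the $L^{2,K}$-estimate is needed to justify the global computation, yet it is also the conclusion. Resolving it cleanly requires the finite-horizon truncation together with the martingale/localization argument, so that the uniform-in-$T$ bound is obtained a priori and the global integrability is recovered only in the limit; the remaining care is purely bookkeeping, tracking the Young parameters so the constants line up with the stated estimate.
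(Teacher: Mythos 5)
Your proposal is correct, and its core is identical to the paper's own argument: It\^o's formula applied to $e^{2Ks}|X_s|^2$ on $[t,T]$, the same three-way splitting of $b$ (monotonicity in $x$, Lipschitz in $\mu$ via $\mathcal{W}_2(\mathcal{L}(X_s),\delta_{0_n})\le \mathbb{E}[|X_s|^2]^{1/2}$, Young for the inhomogeneous part) and two-way splitting of $\sigma$, with the Young parameters distributed exactly as in the paper, so that the coefficient $2\kappa_x-2K-2l_{b\mu}-(l_{\sigma x}+l_{\sigma\mu})^2-3\varepsilon$ and the weights $\frac{1}{\varepsilon}$, $1+\frac{l_{\sigma x}^2+l_{\sigma\mu}^2}{\varepsilon}$ come out identically.

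The one place where you genuinely deviate is the passage $T\to\infty$, and your version is the more careful one. The paper at this point invokes Lemma \ref{infinite value} to kill the terminal term $\mathbb{E}[|e^{KT}X_T|^2]$; but that lemma's hypothesis is precisely $X\in L^{2,K}_{\mathbb{F}}(t,\infty;\mathbb{R}^n)$, which is the conclusion being established, so as written the paper's limiting step has a circular flavor. Your route — keep the terminal term on the favorable side, discard it since it is nonnegative, obtain a bound uniform in $T$, and then pass to the limit by monotone convergence — removes this wrinkle entirely: the $L^{2,K}$ membership is \emph{derived} from the uniform finite-horizon bound rather than presupposed, and Lemma \ref{infinite value} is then only a consequence (as you note, a consistency check), not an ingredient. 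Your explicit localization of the stochastic integral on $[t,T]$ is likewise a detail the paper suppresses but that is needed to justify taking expectations.
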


\begin{proof}
   For any $T>t$, applying It\^{o}'s formula to $\left|X_{s} e^{Ks}\right|^2$ on the interval $[t, T]$ yields
$$
\begin{aligned}
&\quad \mathbb{E}\left\{\left|X_{T}e^{K T}\right|^2-2 K \int_t^T\left|X_{s} e^{K s}\right|^2 d s\right\} \\
&=\mathbb{E}\left\{\left|x_t e^{K t}\right|^2+\int_t^T\left[2\langle X_{s}, b(s, X_{s},\mathcal{L}(X_{s}))\rangle+\left|\sigma\left(s, X_{s},\mathcal{L}(X_{s})\right)\right|^2\right] e^{2 K s} d s\right\} \\
&\leq \mathbb{E}\Bigg\{\left|x_t e^{K t}\right|^2+\int_t^T\Big[2\left\langle X_{s}, b(s, X_{s},\mathcal{L}(X_{s}))-b(s,0,\mathcal{L}(X_{s}))+b(s,0,\mathcal{L}(X_{s}))-b(s, 0,\delta_{0_{n}})\right\rangle\\
&\quad \quad +2|X_{s}||b(s, 0,\delta_{0_{n}})|+\left(|\sigma(s, X_{s},\mathcal{L}(X_{s}))-\sigma(s, 0,\delta_{0_{n}})|+|\sigma(s, 0,\delta_{0_{n}})|\right)^2\Big] e^{2 K s} d s\Bigg\} .
\end{aligned}
$$
By the monotonicity condition of $b$ and Lipschitz condition of $b$ and $\sigma$, we have
$$
\begin{aligned}
& \quad \mathbb{E}\left\{\left|X_{T} e^{K T}\right|^2-2 K \int_t^T\left|X_{s} e^{K s}\right|^2 d s\right\} \\
&  \leq \mathbb{E}\left\{\left|x_t e^{K t}\right|^2+\int_t^T\left[-2 \kappa_{x}|X_{s}|^2+2l_{b\mu}|X_{s}|\sqrt{\mathbb{E}[|X_{s}|^{2}]}+2|X_{s}||b(s, 0,\delta_{0_{n}})|\right.\right.\\
&\quad \quad  \left.\left.+\left(l_{\sigma x}|X_{s}|+l_{\sigma \mu}\sqrt{\mathbb{E}[|X_{s}|^{2}]}+|\sigma(s, 0,\delta_{0_{n}})|\right)^2\right] e^{2 K s} d s\right\}.
\end{aligned}
$$
For any $\varepsilon>0$, with the help of the inequality $2 a b \leq \varepsilon|a|^2+(1 / \varepsilon)|b|^2$, we derive
$$
\begin{aligned}
&\quad \mathbb{E}\left\{\left|X_{T} e^{K T}\right|^2-2 K \int_t^T\left|X_{s} e^{K s}\right|^2 d s\right\} \\
&\leq \mathbb{E}\left\{\left|x_t e^{K t}\right|^2+\int_t^T\left[\left(l_{\sigma x}^2+l_{\sigma \mu}^{2}+2l_{b\mu}+2l_{\sigma x}l_{\sigma\mu}+3 \varepsilon-2 \kappa_{x} \right)|X_{s}|^2+\frac{1}{\varepsilon}|b(s, 0,\delta_{0_{n}})|^2\right.\right. \\
& \quad \left.\left.\quad+\left(1+\frac{l_{\sigma x}^2+l_{\sigma\mu}^{2}}{\varepsilon}\right)|\sigma(s, 0,\delta_{0_{n}})|^2\right] e^{2 K s} d s\right\} .
\end{aligned}
$$
Let $T\rightarrow \infty$, thanks to Lemma \ref{infinite value}, we have
\begin{equation}\label{x estimate 2}
\begin{aligned}
& \left(2 \kappa_{x}-2 K-2l_{b\mu}-(l_{\sigma x}+l_{\sigma \mu})^{2}-3 \varepsilon\right) \mathbb{E} \int_t^\infty\left|X_{s} e^{K s}\right|^2 d s \\
& \quad \leq \mathbb{E}\left\{\left|x_t e^{K t}\right|^2+\int_t^\infty\left[\frac{1}{\varepsilon}\left|b(s, 0,\delta_{0_{n}}) e^{K s}\right|^2+\left(1+\frac{l_{\sigma x}^2+l_{\sigma \mu}^{2}}{\varepsilon}\right)\left|\sigma(s, 0,\delta_{0_{n}}) e^{K s}\right|^2\right] d s\right\} .
\end{aligned}    
\end{equation}
The condition $K<\kappa-\frac{(l_{\sigma x}+l_{\sigma \mu})^{2}}{2}-l_{b\mu}$ implies the existence of  the number $\varepsilon$ such that $3\varepsilon \in\left(0, \kappa-\frac{(l_{\sigma x}+l_{\sigma \mu})^{2}}{2}-l_{b\mu}-K\right)$. Therefore, the estimate \eqref{x estimate 2} and Assumption \ref{assumption x} (i) imply that the solution $X \in L_{\mathbb{F}}^{2, K}\left(t, \infty ; \mathbb{R}^n\right)$.
\end{proof}
\subsection{Infinite horizon McKean-Vlasov BSDEs}
For $t\in[0, \infty)$, we introduce the following infinite horizon McKean-Vlasov BSDE:
\begin{equation}\label{infinite bsde}
    \begin{aligned}
        dY_{s} = f(s,Y_{s},Z_{s},\mathcal{L}(Y_{s}), \mathcal{L}(Z_{s}))ds+Z_{s}dW_{s}, \quad s\in [t,\infty),
    \end{aligned}
\end{equation}
where $f:[t,\infty)\times \Omega \times \mathbb{R}^{m}\times \mathbb{R}^{m \times d}\times \mathcal{P}_{2}(\mathbb{R}^{m})\times \mathcal{P}_{2}(\mathbb{R}^{m \times d})\rightarrow\mathbb{R}^{m}$ is a measurable function and satisfies the following assumption.
\begin{assumption}\label{assumption y}
    (i)  For any $y\in \mathbb{R}^{m}$, $z\in \mathbb{R}^{m \times d}$, $\mu\in \mathcal{P}_{2}(\mathbb{R}^{m})$, $\nu \in \mathcal{P}_{2}( \mathbb{R}^{m \times d})$, the process $f(\cdot,y,z,\mu,\nu)$ is $\mathbb{F}$-progressively measurable. Moreover, there exists a constant $K \in \mathbb{R}$ such that $f(\cdot, 0,0,\delta_{0_{m}},\delta_{0_{m\times d}}) \in L_{\mathbb{F}}^{2, K}\left(t, \infty ; \mathbb{R}^m\right)$.\\
(ii) $f(s,y,z,\mu,\nu)$ is Lipschitz in $(y,z,\mu,\nu)$, i.e., there exist positive constants $l_{y},l_{z},l_{\mu_{y}},l_{\mu_{z}}$ such that for any  $y,y^{\prime} \in \mathbb{R}^{m}$, $z,z^{\prime} \in \mathbb{R}^{m \times d}$, $\mu, \mu^{\prime} \in \mathcal{P}_2(\mathbb{R}^{m})$, $\nu,\nu^{\prime} \in \mathcal{P}_{2}(\mathbb{R}^{m \times d})$ and almost all $(s,\omega) \in [t,\infty)\times \Omega$,  
\begin{equation}
     |f(s, y,z,\mu,\nu)-f(s, y^{\prime},z^{\prime},\mu^{\prime},\nu^{\prime})| \leq l_{y}|y-y^{\prime}|+l_{z}|z-z^{\prime}|+l_{\mu_{y}}\mathcal{W}_{2}(\mu,\mu^{\prime})+l_{\mu_{z}}\mathcal{W}_{2}(\nu,\nu^{\prime}).
\end{equation}
 (iii) There exists a constant $\kappa_{y}\in \mathbb{R}$ such that for any $y, y^{\prime} \in \mathbb{R}^{m}$, $z\in \mathbb{R}^{m \times d}$, $\mu \in \mathcal{P}_2(\mathbb{R}^{m})$, $\nu \in \mathcal{P}_{2}(\mathbb{R}^{m \times d})$ and almost all $(s,\omega)\in [t,\infty)\times \Omega$, it holds that
\begin{equation}\label{monotonicity y}
\left \langle
y-y^{\prime},f(s, y,z, \mu, \nu)-f\left(s, y^{\prime},z, \mu, \nu
\right) \right \rangle \geq-\kappa_{y}\left|y-y^{\prime}\right|^2.    
\end{equation}
\end{assumption}
First, we give the following an a priori estimate.
\begin{lemma}
    Let $f$ be a coefficient satisfying Assumption \ref{assumption y} and $K>\kappa_y+l_{\mu_{y}}+l_{z}^{2}+l_{\mu_{z}}^{2}$. Let $(Y, Z) \in L_{\mathbb{F}}^{2, K}\left(t, \infty ; \mathbb{R}^{m+m\times d}\right) $ be a solution to McKean-Vlasov BSDE \eqref{infinite bsde}. Then, for any $\varepsilon>0$, we have
\begin{equation}\label{BSDE estimate}
\begin{aligned}
& \mathbb{E}\left\{\left|Y_{t}e^{K t}\right|^2+\int_t^{\infty}\left[\left(2 K-2 \kappa_y-2l_{z}^2-2l_{\mu_{z}}^{2}-2
l_{\mu_{y}}-3 \varepsilon\right)\left|Y_{s} e^{K s}\right|^2\right.\right. \\
& \left.\left.\quad+\frac{\varepsilon}{l_{z}^2+l_{\mu_{z}}^{2}+\varepsilon}\left|Z_{s} e^{K s}\right|^2\right] d s\right\} \leq \frac{1}{\varepsilon} \mathbb{E} \int_t^{\infty}\left|f(s, 0,0,\delta_{0_{m}},\delta_{0_{m \times d}}) e^{K s}\right|^2 d s .
\end{aligned}    
\end{equation}
\end{lemma}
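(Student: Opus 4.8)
The plan is to apply It\^o's formula to $s\mapsto |e^{Ks}Y_s|^2$ on a finite window $[t,T]$ and then let $T\to\infty$, using the membership $(Y,Z)\in L^{2,K}_{\mathbb{F}}(t,\infty;\mathbb{R}^{m+m\times d})$ to discard the terminal contribution. Writing $f_s:=f(s,Y_s,Z_s,\mathcal{L}(Y_s),\mathcal{L}(Z_s))$, It\^o's formula yields, for $T>t$,
\begin{equation*}
\mathbb{E}\big[|e^{KT}Y_{T}|^{2}\big]=\mathbb{E}\big[|e^{Kt}Y_{t}|^{2}\big]+\mathbb{E}\int_{t}^{T}e^{2Ks}\Big(2K|Y_{s}|^{2}+2\langle Y_{s},f_{s}\rangle+|Z_{s}|^{2}\Big)\,ds,
\end{equation*}
where the martingale $\int_{t}^{T}e^{2Ks}\langle Y_{s},Z_{s}\,dW_{s}\rangle$ is removed by a standard localization: one first integrates up to $T\wedge\tau_k$ for stopping times $\tau_k\uparrow\infty$ reducing the integrand to a true martingale, takes expectations, and then passes to the limit using $(Y,Z)\in L^{2,K}$ (the integrand $e^{2Ks}Y_sZ_s$ need not be square-integrable a priori, which is why this step is needed). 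Rearranging isolates $\mathbb{E}[|e^{Kt}Y_t|^2]+\mathbb{E}\int_t^T e^{2Ks}(2K|Y_s|^2+|Z_s|^2)\,ds$ against the terminal term minus the $\langle Y_s,f_s\rangle$ term.

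The heart of the estimate is bounding $-2\langle Y_s,f_s\rangle$ from above. I split $f_s=[f(s,Y_s,Z_s,\cdot)-f(s,0,Z_s,\cdot)]+[f(s,0,Z_s,\cdot)-f(s,0,0,\delta_{0_m},\delta_{0_{m\times d}})]+f(s,0,0,\delta_{0_m},\delta_{0_{m\times d}})$. The monotonicity Assumption \ref{assumption y}(iii) (with $y=Y_s$, $y'=0$) controls the first bracket, giving $-2\langle Y_s,\text{bracket}_1\rangle\le 2\kappa_y|Y_s|^2$. For the second bracket the Lipschitz Assumption \ref{assumption y}(ii), together with $\mathcal{W}_2(\mathcal{L}(Y_s),\delta_{0_m})\le(\mathbb{E}|Y_s|^2)^{1/2}$ and $\mathcal{W}_2(\mathcal{L}(Z_s),\delta_{0_{m\times d}})\le(\mathbb{E}|Z_s|^2)^{1/2}$ from the preliminaries, gives the pointwise bound $l_z|Z_s|+l_{\mu_y}(\mathbb{E}|Y_s|^2)^{1/2}+l_{\mu_z}(\mathbb{E}|Z_s|^2)^{1/2}$. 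Multiplying by $|Y_s|$ and taking expectations, the mean-field-in-$Y$ contribution is exactly $2l_{\mu_y}\mathbb{E}|Y_s|^2$, since $\mathbb{E}[|Y_s|]\,(\mathbb{E}|Y_s|^2)^{1/2}\le\mathbb{E}|Y_s|^2$, so no Young splitting is spent there.

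It remains to absorb the $|Z_s|$-terms, and the decisive choice is to treat both of them, $2l_z|Y_s||Z_s|$ and (after expectation) $2l_{\mu_z}(\mathbb{E}|Y_s|^2)^{1/2}(\mathbb{E}|Z_s|^2)^{1/2}$, with the \emph{single} Young parameter $\theta=\frac{l_z^2+l_{\mu_z}^2}{2(l_z^2+l_{\mu_z}^2+\varepsilon)}$, i.e. $2ab\le\theta^{-1}a^2+\theta b^2$. This routes a total of $2\theta|Z_s|^2=\frac{l_z^2+l_{\mu_z}^2}{l_z^2+l_{\mu_z}^2+\varepsilon}|Z_s|^2$ onto the $|Z_s|^2$ budget, so that after cancelling against the $|Z_s|^2$ produced by It\^o exactly $\frac{\varepsilon}{l_z^2+l_{\mu_z}^2+\varepsilon}|Z_s|^2$ survives on the left, while it produces $(l_z^2+l_{\mu_z}^2)\theta^{-1}\mathbb{E}|Y_s|^2=(2l_z^2+2l_{\mu_z}^2+2\varepsilon)\mathbb{E}|Y_s|^2$. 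Treating the inhomogeneous term by $2|Y_s||f(s,0,0,\delta_{0_m},\delta_{0_{m\times d}})|\le\varepsilon|Y_s|^2+\varepsilon^{-1}|f(s,0,0,\delta_{0_m},\delta_{0_{m\times d}})|^2$ supplies the last $\varepsilon$ and the right-hand side. Collecting the $|Y_s|^2$ coefficient $-2K+2\kappa_y+2l_{\mu_y}+2l_z^2+2l_{\mu_z}^2+3\varepsilon$ and moving it to the left reproduces the stated coefficients precisely.

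Finally, I let $T\to\infty$. Since $\mathbb{E}\int_t^\infty e^{2Ks}|Y_s|^2\,ds<\infty$, the nonnegative function $s\mapsto e^{2Ks}\mathbb{E}|Y_s|^2$ is integrable, hence $\liminf_{s\to\infty}e^{2Ks}\mathbb{E}|Y_s|^2=0$, and there is a sequence $T_n\uparrow\infty$ with $\mathbb{E}[|e^{KT_n}Y_{T_n}|^2]\to0$. Along $T_n$ the terminal term vanishes, the left-hand integrals converge to the integrals over $[t,\infty)$ by dominated convergence (their integrands are dominated by $e^{2Ks}(C|Y_s|^2+|Z_s|^2)$, integrable since $(Y,Z)\in L^{2,K}$), and the right-hand side converges by monotone convergence, giving \eqref{BSDE estimate}. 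I expect the main obstacles to be the two bookkeeping points rather than any deep idea: the localization needed to discard the stochastic integral, and the clean identification of $\theta$ so that the residual $|Z_s|^2$ coefficient collapses to exactly $\varepsilon/(l_z^2+l_{\mu_z}^2+\varepsilon)$ while the $|Y_s|^2$ coefficient matches $2l_z^2+2l_{\mu_z}^2+2\varepsilon$. Note that the hypothesis $K>\kappa_y+l_{\mu_y}+l_z^2+l_{\mu_z}^2$ is what renders the $|Y_s|^2$ coefficient positive for small $\varepsilon$, but it is not actually used in deriving the inequality, which holds for every $\varepsilon>0$.
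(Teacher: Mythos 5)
Your proof is correct and follows essentially the same route as the paper's: It\^o's formula applied to $|e^{Ks}Y_s|^2$ on $[t,T]$, a telescoping decomposition of $f$ handled by the monotonicity assumption in $y$ and the Lipschitz assumption in $(z,\mu,\nu)$ (the paper splits the Lipschitz part into three increments, but the totals are identical), Young's inequality calibrated exactly as in the paper so that the surviving $|Z_s|^2$ coefficient is $\varepsilon/(l_z^2+l_{\mu_z}^2+\varepsilon)$ and the $|Y_s|^2$ coefficient is $2\kappa_y+2l_z^2+2l_{\mu_z}^2+2l_{\mu_y}+3\varepsilon$, and finally $T\to\infty$. Your two refinements—justifying the removal of the stochastic integral by localization, and replacing the paper's appeal to Lemma \ref{infinite value} (stated there for forward equations) by a self-contained $\liminf$/subsequence argument for the terminal term—are both valid and, if anything, tighten the paper's exposition.
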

\begin{proof}
     For any $T \in(t, \infty)$, applying It\^{o}'s formula to $\left|Y_{s} e^{Ks}\right|^2$ on the interval $[t, T]$ leads to
\begin{equation}
\begin{aligned}
&\mathbb{E}\left\{\left|e^{K t}Y_{t} \right|^2+\int_t^T\left[2 K\left| e^{K s}Y_{s}\right|^2+\left| e^{K s}Z_{s}\right|^2\right] d s\right\} \\
&=\mathbb{E}\Bigg\{\left|e^{K T}Y_{T} \right|^2-2 \int_t^T\Big[\Big\langle Y_{s}, f(s, Y_{s}, Z_{s},\mathcal{L}(Y_{s}),\mathcal{L}(Z_{s}))-f(s, 0,Z_{s},\mathcal{L}(Y_{s}),\mathcal{L}(Z_{s}))\Big\rangle \\
&\quad \quad+\Big\langle Y_{s}, f(s, 0,Z_{s},\mathcal{L}(Y_{s}),\mathcal{L}(Z_{s}))-f(s, 0,0,\mathcal{L}(Y_{s}),\mathcal{L}(Z_{s}))\Big\rangle\\
&\quad \quad  +\Big\langle Y_{s}, f(s, 0,0,\mathcal{L}(Y_{s}),\mathcal{L}(Z_{s}))- f(s, 0,0,\delta_{0_{m}},\mathcal{L}(Z_{s}))\Big\rangle\\
&\quad \quad +\Big\langle Y_{s}, f(s, 0,0,\delta_{0_{m}},\mathcal{L}(Z_{s}))- f(s, 0,0,\delta_{0_{m}},\delta_{0_{m \times d}})\Big\rangle \Big] e^{2 K s} d s\Bigg\} \\
&\leq \mathbb{E}\Bigg\{\left|e^{K T}Y_{T}\right|^2+\int_t^T\Big[2 \kappa|Y_{s}|^2+2 l_{z}|Y_{s}||Z_{s}|+2l_{\mu_{y}}|Y_{s}|\sqrt{\mathbb{E}[|Y_{s}|^{2}]}+2l_{\mu_{z}}|Y_{s}|\sqrt{\mathbb{E}[|Z_{s}|^{2}]}\\
&\quad \quad  +2|Y_{s}||f(s,0,0,\delta_{0_{m}},\delta_{0_{m \times d}})|\Big] e^{2 K s} d s\Bigg\},
\end{aligned}    
\end{equation}
where the monotonicity condition and the Lipschitz condition of $f$ are used.\\
For any $\varepsilon>0$, by the inequalities
$$
2 a|y||z| \leq\left(l_{z}^2+l_{\mu_{z}}^{2}+\varepsilon\right)|y|^2+\frac{a^2}{l_{z}^2+l_{\mu_{z}}^{2}+\varepsilon}|z|^2, \quad 2|y||f| \leq \varepsilon|y|^2+\frac{1}{\varepsilon}|f|^2,
$$
we deduce that
$$
\begin{aligned}
& \mathbb{E}\left\{\left|Y_{t} e^{K t}\right|^2+\int_t^T\left[\left(2 K-2 \kappa_y-2l_{z}^2-2l_{\mu_{z}}^{2}-2
l_{\mu_{y}}-3 \varepsilon\right)\left|Y_{s} e^{K s}\right|^2+\frac{\varepsilon}{l_{ z}^2+l_{\mu_{z}}^{2}+\varepsilon}\left|Z_{s} e^{K s}\right|^2\right] d s\right\} \\
& \leq \mathbb{E}\left\{\left|Y_{T}e^{K T}\right|^2+\int_t^T \frac{1}{\varepsilon}\left|f(s, 0,0,\delta_{0_{m}},\delta_{0_{m \times d}
}) e^{K s}\right|^2 d s\right\} .
\end{aligned}
$$
 Then, by letting $T \rightarrow \infty$ on both sides of the above inequality, with help of Lemma \ref{infinite value}, we obtain the estimate \eqref{BSDE estimate}.
\end{proof}

Similar to the definition of solutions for classical BSDEs on infinite horizon in \cite{shi2020}, a pair of processes $(Y,Z)\in L^{2,K}_{\mathbb{F}}(t,\infty;\mathbb{R}^{m+m\times d})$ is called a solution to McKean-Vlasov BSDE \eqref{infinite bsde} if and only if, for any $T \in (t,\infty)$, the pair of processes $(Y,Z)$ satisfies 
\begin{equation}
    Y_{s} = Y_{T}-\int_{s}^{T}f(r,Y_{r},Z_{r},\mathcal{L}(Y_{r}),\mathcal{L}(Z_{r}))dr -\int_{s}^{T}Z_{r}dW_{r}, \quad s\in [t,T].
\end{equation}

With the help of above a priori estimate \eqref{BSDE estimate}, the method of Peng and Shi \cite[Theorem 4]{peng2000} is still valid to yield the following result.
\begin{lemma}\label{bsde lemma}
  Let the coefficient $f$ satisfies Assumption \ref{assumption y} and let $K>\kappa_y+l_{\mu_{y}}+l_{z}^{2}+l_{\mu_{z}}^{2}$. Then, infinite horizon McKean-Vlasov BSDE \eqref{infinite bsde} admits a unique solution $(Y, Z) \in L_{\mathbb{F}}^{2, K}\left(t, \infty ; \mathbb{R}^{m+m\times d}\right)$.   
\end{lemma}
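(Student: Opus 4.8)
The plan is to obtain uniqueness directly from the a priori estimate \eqref{BSDE estimate}, and to obtain existence by approximating the infinite horizon equation with a sequence of finite horizon McKean-Vlasov BSDEs with vanishing terminal data, following the scheme of Peng and Shi \cite{peng2000}.

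For uniqueness, suppose $(Y^1,Z^1)$ and $(Y^2,Z^2)$ are two solutions in $L^{2,K}_{\mathbb{F}}(t,\infty;\mathbb{R}^{m+m\times d})$ and set $\hat Y=Y^1-Y^2$, $\hat Z=Z^1-Z^2$. Applying It\^o's formula to $|e^{Ks}\hat Y_s|^2$ exactly as in the derivation of \eqref{BSDE estimate}, the only change is that the base point $f(s,0,0,\delta_{0_m},\delta_{0_{m\times d}})$ cancels in the difference, so the free term disappears; the measure arguments are handled by $\mathcal{W}_2(\mathcal{L}(Y^1_s),\mathcal{L}(Y^2_s))\le \mathbb{E}[|\hat Y_s|^2]^{1/2}$ and $\mathcal{W}_2(\mathcal{L}(Z^1_s),\mathcal{L}(Z^2_s))\le \mathbb{E}[|\hat Z_s|^2]^{1/2}$. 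This yields
$$\left(2K-2\kappa_y-2l_z^2-2l_{\mu_z}^2-2l_{\mu_y}-3\varepsilon\right)\mathbb{E}\int_t^\infty|e^{Ks}\hat Y_s|^2\,ds+\frac{\varepsilon}{l_z^2+l_{\mu_z}^2+\varepsilon}\,\mathbb{E}\int_t^\infty|e^{Ks}\hat Z_s|^2\,ds\le 0.$$
Since $K>\kappa_y+l_{\mu_y}+l_z^2+l_{\mu_z}^2$, one may fix $\varepsilon>0$ small enough that the first coefficient is strictly positive; both integrals then vanish and $(Y^1,Z^1)=(Y^2,Z^2)$.

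For existence, for each integer $n>t$ let $(Y^n,Z^n)$ be the unique solution on $[t,n]$ of the finite horizon McKean-Vlasov BSDE with generator $f$ and terminal condition $Y^n_n=0$, whose well-posedness follows from the standard Lipschitz theory of finite horizon McKean-Vlasov BSDEs (see \cite{meanfield2018book1}), and extend it by zero on $(n,\infty)$ so that $(Y^n,Z^n)\in L^{2,K}_{\mathbb{F}}(t,\infty;\mathbb{R}^{m+m\times d})$. Running the computation behind \eqref{BSDE estimate} on the interval $[n,m]$ for $m>n$, where now the terminal value $Y^m_m=0$ appears on the right-hand side in place of the limit, and discarding the positive integral terms gives
$$\mathbb{E}\big|e^{Kn}Y^m_n\big|^2\le \frac1\varepsilon\,\mathbb{E}\int_n^\infty\big|e^{Ks}f(s,0,0,\delta_{0_m},\delta_{0_{m\times d}})\big|^2\,ds,$$
which tends to $0$ as $n\to\infty$ uniformly in $m$, because $f(\cdot,0,0,\delta_{0_m},\delta_{0_{m\times d}})\in L^{2,K}_{\mathbb{F}}$. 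On $[t,n]$ both $Y^m$ and $Y^n$ solve the same equation, so the difference estimate from the uniqueness step, now carrying the terminal mismatch $Y^m_n-Y^n_n=Y^m_n$, bounds $\mathbb{E}\int_t^n|e^{Ks}(Y^m_s-Y^n_s)|^2\,ds$ and $\mathbb{E}\int_t^n|e^{Ks}(Z^m_s-Z^n_s)|^2\,ds$ by a multiple of $\mathbb{E}|e^{Kn}Y^m_n|^2$; the contribution on $(n,\infty)$, where $Y^n=Z^n=0$, is controlled by the same tail quantity. Hence $(Y^n,Z^n)$ is Cauchy in $L^{2,K}_{\mathbb{F}}(t,\infty;\mathbb{R}^{m+m\times d})$ and converges to some limit $(Y,Z)$.

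It remains to verify that $(Y,Z)$ solves \eqref{infinite bsde}. For fixed $T$ and every $n>T$, $(Y^n,Z^n)$ satisfies the defining integral relation on $[t,T]$; passing to the limit using the $L^{2,K}$ convergence, the Lipschitz continuity of $f$ in $(y,z,\mu,\nu)$, and the bound $\mathcal{W}_2\le \mathbb{E}[|\cdot|^2]^{1/2}$ for the measure arguments yields the required identity for every $T\in(t,\infty)$. The main obstacle is the uniform-in-$m$ decay of the terminal term $\mathbb{E}|e^{Kn}Y^m_n|^2$, i.e.\ the transversality that compensates for the fixed terminal conditions of the approximating problems; this is precisely what the sign condition $K>\kappa_y+l_{\mu_y}+l_z^2+l_{\mu_z}^2$ secures, while the mean field terms are absorbed throughout by the constants $l_{\mu_y},l_{\mu_z}$ via the Wasserstein estimates.
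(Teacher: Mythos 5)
Your proposal is correct and follows essentially the same route as the paper: the paper proves the a priori estimate \eqref{BSDE estimate} and then simply invokes the method of Peng and Shi \cite[Theorem 4]{peng2000}, which is exactly the scheme you spell out — uniqueness from the difference version of the estimate, and existence by solving finite horizon McKean-Vlasov BSDEs with zero terminal data, using the weighted estimate to get the uniform tail decay and the Cauchy property in $L^{2,K}_{\mathbb{F}}$, then passing to the limit. Your write-up supplies the details (Wasserstein bounds for the mean field terms, the two-interval comparison on $[t,n]$ and $[n,m]$) that the paper leaves implicit in that citation.
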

\section{Infinite horizon McKean-Vlasov FBSDEs}
In this section, we establish existence and uniqueness of solution to infinite horizon McKean-Vlasov FBSDE
\begin{equation}\label{infinite FBSDE}
\left\{\begin{aligned}
d X_t&=B\left(t, X_t, Y_t, Z_{t},\mathcal{L}\left(X_t, Y_t, Z_{t}\right)\right)  dt+\sigma\left(t, X_t, Y_t, Z_{t},\mathcal{L}\left(X_t, Y_t, Z_{t}\right)\right)  d W_t, \quad t\in [0,\infty), \\
d Y_t&=F\left(t, X_t, Y_t, Z_{t}, \mathcal{L}\left(X_t, Y_t, Z_{t}\right)\right)  dt+Z_t  d W_t, \quad t\in [0,\infty), \\
X_0&=\xi,
\end{aligned}\right.    
\end{equation}
where $\xi \in  L^{2}_{\mathcal{F}_{0}}(\Omega;\mathbb{R}^{n})$.
\begin{definition}
    A triple of $\mathbb{F}$-progressively measurable processes $(X, Y, Z) \in L^{2, K}_{\mathbb{F}}\left(0, \infty ; \mathbb{R}^{n+m+m \times d}\right)$ is called a solution of FBSDE \eqref{infinite FBSDE}, if \eqref{infinite FBSDE} is satisfied in the following sense: for any $T>0$,
$$
\begin{aligned}
X_t & =  \xi+\int_0^t B(s,  X_s, Y_s, Z_{s},\mathcal{L}\left(X_s, Y_s,Z_{s}\right))  d s +\int_0^t \sigma(s,  X_s, Y_s, Z_{s}, \mathcal{L}\left( X_s, Y_s, Z_{s}\right))  d W_s,\quad t\in[0,T], \\
Y_t &  =  Y_{T}-\int_t^T F(s, X_s, Y_s, Z_{s}, \mathcal{L}\left( X_s, Y_s, Z_{s}\right))  d s-\int_t^T Z_{s}  d W_s, \quad t \in[0, T] .
\end{aligned}
$$
\end{definition}
We introduce the following assumptions.
\begin{assumption*}[\textbf{H1}]\label{H1}
  $\xi \in L^{2}_{\mathcal{F}_{0}}(\Omega;\mathbb{R}^{n})$.  Let $(B,F,\sigma):[0, \infty) \times \Omega \times \mathbb{R}^n \times \mathbb{R}^m \times \mathbb{R}^{m \times d} \times \mathcal{P}_2\left(\mathbb{R}^{n+m+m \times d} \right) \rightarrow (\mathbb{R}^n,\mathbb{R}^{m},\mathbb{R}^{m \times d})$ be $\mathbb{F}$- progressively measurable functions satisfying:\\
   (i)There exists a positive constant $l$ such that for any $x, x^{\prime} \in \mathbb{R}^{n}$, $y, y^{\prime} \in \mathbb{R}^{m}$, $z,z^{\prime} \in \mathbb{R}^{m \times d}$, $m, m^{\prime} \in \mathcal{P}_2\left(\mathbb{R}^{n+m+m \times d} \right)$ and almost all $(t,\omega)\in [0,\infty)\times \Omega$, 
\begin{equation*}
|(B,F,\sigma)(t, x, y, z,m)- (B,F,\sigma)\left(t, x^{\prime}, y^{\prime}, z^{\prime},m^{\prime}\right)| \leq l\left(\left|x-x^{\prime}\right|+\left|y-y^{\prime}\right|+\left|z-z^{\prime}\right|+\mathcal{W}_2\left(m, m^{\prime}\right)\right).   
\end{equation*}
(ii) There exists a constant $K\in \mathbb{R}$ such that $(B,F,\sigma)(\cdot,0,0,0,\delta_{0_{n}},\delta_{0_{m}},\delta_{0_{m\times d}})\in L^{2, K}_{\mathbb{F}}\left(0,\infty;\mathbb{R}^{n+m+n\times d}\right)$.
\end{assumption*}
Besides Assumption (H1), we introduce the following monotonicity condition to deal with the coupling between the forward equation and the backward equation in \eqref{infinite FBSDE} on infinite horizon.
\begin{assumption*}[\textbf{H2}]\label{H2}
     There exist constants $\kappa_{x},\kappa_{y}\in\mathbb{R}$, $ \beta_1, \beta_2 \in[0, \infty)$, $l_{\sigma}, l_{\phi},l_{z} \in (0,\infty)$, $\gamma \in (0,1)$, $G \in \mathbb{R}^{m \times n}$ and measurable functions $\phi_1: [0,\infty)\times  L^2\left(\Omega ; \mathbb{R}^n\right)\times  L^2\left(\Omega ; \mathbb{R}^n\right) \rightarrow[0, \infty)$, $\phi_2:[0, \infty) \times L^2\left(\Omega ; \mathbb{R}^{n+m+m \times d} \right)\times  L^2\left(\Omega ; \mathbb{R}^{n+m+m \times d} \right) \times \mathcal{P}_{2}(\mathbb{R}^{n+m+m \times d}) \times \mathcal{P}_{2}(\mathbb{R}^{n+m+m \times d})\rightarrow[0, \infty)$ such that for all $t \in[0, \infty)$, $i \in\{1,2\}, \Theta_i:=$ $\left(X_i, Y_i, Z_i\right) \in L^2\left(\Omega ; \mathbb{R}^{n+m+m \times d} \right)$, we have as follows: 
\begin{itemize}
    \item [(i)] One of the following two monotonicity conditions holds:
\begin{equation}\label{monotonicity}
 \begin{aligned}
& \mathbb{E}\left[\left\langle B\left(t,  \Theta_1,  \mathcal{L}(\Theta_1)\right)-B\left(t,  \Theta_2,  \mathcal{L}(\Theta_2)\right), G^{\top}\left(Y_1-Y_2\right)\right\rangle\right] \\
& \quad+\mathbb{E}\left[\left\langle\sigma\left(t,  \Theta_1,  \mathcal{L}(\Theta_1)\right)-\sigma\left(t,  \Theta_2,  \mathcal{L}(\Theta_2)\right), G^{\top}\left(Z_1-Z_2\right)\right\rangle\right] \\
& \quad+\mathbb{E}\left[\left\langle F\left(t, \Theta_1,  \mathcal{L}(\Theta_1)\right)-F\left(t, \Theta_2,  \mathcal{L}(\Theta_2)\right), G\left(X_1-X_2\right)\right\rangle\right] \\
&\quad +(\kappa_{x}+\kappa_{y})\mathbb{E}[\langle X_{1}-X_{2},G^{\top}(Y_{1}-Y_{2}) \rangle]\\
&\leq-\beta_1 \phi_1\left(t,X_1, X_2\right)-\beta_2 \phi_2\left(t, \Theta_1, \Theta_2,\mathcal{L}(\Theta_{1}), \mathcal{L}(\Theta_{2})\right),
\end{aligned}   
\end{equation}
and
\begin{equation}\label{monotonicity2}
 \begin{aligned}
& \mathbb{E}\left[\left\langle B\left(t,  \Theta_1,  \mathcal{L}(\Theta_1)\right)-B\left(t,  \Theta_2,  \mathcal{L}(\Theta_2)\right), G^{\top}\left(Y_1-Y_2\right)\right\rangle\right] \\
& \quad+\mathbb{E}\left[\left\langle\sigma\left(t,  \Theta_1,  \mathcal{L}(\Theta_1)\right)-\sigma\left(t,  \Theta_2,  \mathcal{L}(\Theta_2)\right), G^{\top}\left(Z_1-Z_2\right)\right\rangle\right] \\
& \quad+\mathbb{E}\left[\left\langle F\left(t, \Theta_1,  \mathcal{L}(\Theta_1)\right)-F\left(t, \Theta_2,  \mathcal{L}(\Theta_2)\right), G\left(X_1-X_2\right)\right\rangle\right] \\
&\quad +(\kappa_{x}+\kappa_{y})\mathbb{E}[\langle X_{1}-X_{2},G^{\top}(Y_{1}-Y_{2}) \rangle]\\
&\geq \beta_1 \phi_1\left(t,X_1, X_2\right)+\beta_2 \phi_2\left(t, \Theta_1, \Theta_2,\mathcal{L}(\Theta_{1}), \mathcal{L}(\Theta_{2})\right).
\end{aligned}   
\end{equation}
\item [(ii)]
 One of the following two cases holds.\\
\textit{Case 1:} $\beta_2>0$ and for any $t \in[0, \infty)$,
\begin{equation}\label{xy monotonicity}
\begin{aligned}
&\mathbb{E}\left[|\sigma\left(t,  \Theta_1, \mathcal{L}(\Theta_1)\right)- \sigma\left(t,  \Theta_2,  \mathcal{L}(\Theta_2)\right)|^{2} \right]
\\
& \quad \leq l_{\sigma}\mathbb{E}[|X_1-X_2|^{2}]+l_{\phi} \phi_2\left(t, \Theta_1, \Theta_2,\mathcal{L}(\Theta_{1}), \mathcal{L}(\Theta_{2})\right),   \\
&\mathbb{E}\left[\left\langle 
  B(t,\Theta_{1}, \mathcal{L}(\Theta_1))-B(t,\Theta_{2}, \mathcal{L}(\Theta_2)),X_{1}-X_{2}
 \right\rangle\right]\\
 & \quad \leq -\kappa_{x}\mathbb{E}[|X_{1}-X_{2}|^{2}]+l_{\phi} \phi_2\left(t, \Theta_1, \Theta_2,\mathcal{L}(\Theta_{1}), \mathcal{L}(\Theta_{2})\right),\\
 & \mathbb{E}\left[\langle 
  F(t,\Theta_{1},\mathcal{L}(\Theta_{1}))-F(t,\Theta_{2},\mathcal{L}(\Theta_{2})),Y_{1}-Y_{2}\rangle\right] \\
&\quad \geq -(\kappa_{y}+\frac{l_{z}}{2})\mathbb{E}\left[|Y_{1}-Y_{2}|^{2}\right]-\frac{\gamma}{2}\mathbb{E}[|Z_{1}-Z_{2}|^{2}]\\
&\quad \quad-l_{\phi}\left(\mathbb{E}[|X_{1}-X_{2}|^{2}]+ \phi_2\left(t, \Theta_1, \Theta_2,\mathcal{L}(\Theta_{1}), \mathcal{L}(\Theta_{2})\right)\right).
\end{aligned}
\end{equation}
\textit{Case 2:} $ \beta_1>0$ and for any $t \in[0, \infty)$,
\begin{equation}\label{xy monotonicity 2}
    \begin{aligned}
    &\mathbb{E}\left[|\sigma\left(t,  \Theta_1, \mathcal{L}(\Theta_1)\right)- \sigma\left(t,  \Theta_2,  \mathcal{L}(\Theta_2)\right)|^{2}   \right]\\
    & \quad \leq l_{\sigma}\mathbb{E}[|X_1-X_2|^{2}]+l_{\phi}\mathbb{E}\left[|Y_{1}-Y_{2}|^{2}+|Z_{1}-Z_{2}|^{2}\right],   \\
&\mathbb{E}\left[\left\langle 
  B(t,\Theta_{1}, \mathcal{L}(\Theta_1))-B(t,\Theta_{2}, \mathcal{L}(\Theta_2)),X_{1}-X_{2}
 \right\rangle\right]\\
 &\quad \leq -\kappa_{x}\mathbb{E}[|X_{1}-X_{2}|^{2}]+l_{\phi}\mathbb{E}\left[|Y_{1}-Y_{2}|^{2}+|Z_{1}-Z_{2}|^{2}\right],\\
  & \mathbb{E}\left[\langle 
   F(t,\Theta_{1}
,\mathcal{L}(\Theta_{1}))-F(t,\Theta_{2}
,\mathcal{L}(\Theta_{2})),Y_{1}-Y_{2}
\rangle\right] \\
&\quad \geq -(\kappa_{y}+\frac{l_{z}}{2})\mathbb{E}\left[|Y_{1}-Y_{2}|^{2}\right]-\frac{\gamma}{2}\mathbb{E}[|Z_{1}-Z_{2}|^{2}]-l_{\phi}\phi_{1}(t,X_{1},X_{2}).
\end{aligned}
\end{equation}
\end{itemize}
\end{assumption*}
We give the following remark to explain the Assumption (H2) and compare it with the existing literature.
\begin{remark}\label{remark flexicity monotonicity}
\begin{itemize}  
  \item [(i)] Compared with the finite horizon case in \cite{reisinger2020path}, an additional term $ (\kappa_{x}+\kappa_{y})\mathbb{E}[\langle X_{1}-X_{2},G^{\top}(Y_{1}-Y_{2})\rangle]$ appears in the monotonicity condition \eqref{monotonicity}, which will suit for the later analysis of infinite horizon.
    \item [(ii)] Compared with the monotonicity conditions in the literature, we propose a generalized monotonicity condition \eqref{monotonicity} by introducing two functions $\phi_{1}$ and $\phi_{2}$. The introduction of these two functions provides us with more flexibility. Specially, when we choose  $n=m=d=1$, $\beta_{1}=0$, $\beta_{2}>0$, $\phi_2\left(t, \Theta_1, \Theta_2,\mathcal{L}(\Theta_{1}),\mathcal{L}(\Theta_{2})\right)=\left\|\left(Y_1-Y_2\right)\right\|_{L^2}^2+\left\|\left(X_1-X_2\right)\right\|_{L^2}^2$, the monotonicity condition \eqref{monotonicity} is reduced to the monotonicity condition in \cite{bayraktar2023solvability}.  Moreover, our condition can be seen as a generalization of the domination-monotonicity condition proposed in \cite{yu2021} by choosing  
    $$
		\phi_1\left(t,X_1, X_2\right) =\left\| A(t)(X_{1}-X_{2})\right\|_{L^2}^2,
		$$
		$$
		\phi_{2} \left(t, \Theta_1, \Theta_2,\mathcal{L}(\Theta_{1}),\mathcal{L}(\Theta_{2})\right)=\left\|B(t)(Y_{1}-Y_{2})+C(t)(Z_{1}-Z_{2})\right\|_{L^2}^2,
		$$
     where $A(\cdot),B(\cdot),C(\cdot)$ are three bounded matrix-valued stochastic processes. Using a more general function $\phi_{2}$ instead of a linear combination of $Y$ and $Z$, our solvability results can be easily applied to infinite horizon mean field control problems whose coefficients enjoy specific structural conditions by choosing 
$$
\phi_{2}\left(t, \Theta_1, \Theta_2,\mathcal{L}(\Theta_{1}),\mathcal{L}(\Theta_{2})\right) = \|\hat{\alpha}(t,X_{1},Y_{1},Z_{1},\mathcal{L}(X_{1},Y_{1},Z_{1}))-\hat{\alpha}(t,X_{2},Y_{2},Z_{2},\mathcal{L}(X_{2},Y_{2},Z_{2}))\|^{2}_{L^{2}},
$$
where $\hat{\alpha}$ is the optimal control, which will be discussed in detail in section \ref{section 3}. 
    \item [(iii)] The monotonicity conditions of coefficients $B$ and $F$  are necessary assumptions in infinite horizon situations, which is indicated by the analysis of infinite horizon McKean-Vlasov SDEs and infinite horizon McKean-Vlasov BSDEs in the previous section. From the subsequent analysis, we can observe that the constants $\kappa_{x}$, $\kappa_{y}$ will affect the values of the parameter $K$. Condition \eqref{xy monotonicity} avoids the influence of the monotonicity of function $\phi_{2}$ on the choice of constants $\kappa_{x}$ and $\kappa_{y}$, further the values of parameter $K$. The benefits of this technique will be demonstrated in investigating mean field control problem \eqref{cost functional}-\eqref{state dynamics} and LQ control problems (see Remark \ref{S}).
\end{itemize}
\end{remark}
Now, we give the main result of this section.
\begin{Theorem}\label{global theorem}
 Let Assumptions \textnormal{(H1)} and \textnormal{(H2)} hold. Let
\begin{equation}\label{kxkyk}
\kappa_{x}-\kappa_{y}>\max\{l_{\sigma},l_{z}\} \quad \text{and}\quad  K = \frac{\kappa_{x}+\kappa_{y}}{2}.
\end{equation}
Then, infinite horizon McKean-Vlasov FBSDE \eqref{infinite FBSDE} admits a unique solution $(X,Y,Z)\in L^{2,K}_{\mathbb{F}}(0,\infty;$ $\mathbb{R}^{n+m+m\times d})$. Moreover, we have the following estimate:
\begin{equation}\label{stability 1}
\begin{aligned}
&\quad \mathbb{E}\int_{0}^{\infty}|e^{Kt}X_{t}|^{2}dt+\mathbb{E}\int_{0}^{\infty}|e^{Kt}Y_{t}|^{2}dt+\mathbb{E}\int_{0}^{\infty}|e^{Kt}Z_{t}|^{2}dt\\
&\leq C \mathbb{E}\int_{0}^{T}|(B,F,\sigma)(t,0,0,0,\delta_{0_{n+m+m\times d}})e^{Kt}|^{2}dt,
\end{aligned}
\end{equation}
where $C>0$ is a constant depending on $|G|,\kappa_{x},\kappa_{y},l_{\phi},l_{z},l_{\sigma},l$ and $\beta_{1}$ or $\beta_{2}$. Furthermore, let $(\bar{X},\bar{Y},\bar{Z}) \in L^{2,K}_{\mathbb{F}}(0,\infty;\mathbb{R}^{n+m+m\times d} )$ be a solution to the FBSDE \eqref{infinite FBSDE} with another set of coefficients $(\bar{\xi},\bar{B},\bar{F},\bar{\sigma})$, then we have
\begin{equation}\label{stability 2}
\begin{aligned}
 &\quad \mathbb{E}\int_{0}^{\infty}|e^{Kt}(X_{t}-\bar{X}_{t})|^{2}dt+\mathbb{E}\int_{0}^{\infty}|e^{Kt}(Y_{t}-\bar{Y}_{t})|^{2}dt+\mathbb{E}\int_{0}^{\infty}|e^{Kt}(Z_{t}-\bar{Z}_{t})|^{2}dt\\
&\leq C \left\{\mathbb{E}[|\xi-\bar{\xi}|^{2}]+\mathbb{E}\int_{0}^{\infty}|B(t,\bar{\Theta}_{t})-\bar{B}(t,\bar{\Theta}_{t})e^{Kt}|^{2}dt+\mathbb{E}\int_{0}^{\infty}|F(t,\bar{\Theta}_{t})-\bar{F}(t,\bar{\Theta}_{t})e^{Kt}|^{2}dt \right.\\
&\quad \left.\quad+\mathbb{E}\int_{0}^{\infty}|\sigma(t,\bar{\Theta}_{t})-\bar{\sigma}(t,\bar{\Theta}_{t})e^{Kt}|^{2}dt\right\},     
\end{aligned}
\end{equation}
where $\bar{\Theta}_{t}  = (\bar{X}_{t},\bar{Y}_{t},\bar{Z}_{t},\mathcal{L}(\bar{X}_{t},\bar{Y}_{t},\bar{Z}_{t}))$ and $C$ is the same constant as in \eqref{stability 1}.
\end{Theorem}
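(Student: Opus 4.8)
The plan is to use the method of continuation, building a homotopy from a fully decoupled system to \eqref{infinite FBSDE} along which unique solvability is transported in steps of a fixed size. For $\lambda\in[0,1]$ and source data $(b,f,s)\in L^{2,K}_{\mathbb{F}}(0,\infty;\mathbb{R}^{n+m+m\times d})$, I would introduce the interpolated FBSDE with forward drift $\lambda B(t,\Theta_t,\mathcal{L}(\Theta_t))-(1-\lambda)\kappa_x X_t+b_t$, backward drift $\lambda F(t,\Theta_t,\mathcal{L}(\Theta_t))-(1-\lambda)\kappa_y Y_t+f_t$, and diffusion $\lambda\sigma(t,\Theta_t,\mathcal{L}(\Theta_t))+s_t$. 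A direct computation shows that the affine base coefficients $(-\kappa_x x,-\kappa_y y,0)$ satisfy the cross-term relation in \eqref{monotonicity} with equality to zero once the term $(\kappa_x+\kappa_y)\mathbb{E}[\langle X_1-X_2,G^\top(Y_1-Y_2)\rangle]$ is added; since the monotonicity functional is linear in the coefficients, the interpolated system inherits \eqref{monotonicity} (resp. \eqref{monotonicity2}) with $\beta_i$ replaced by $\lambda\beta_i$, and inherits the one-sided bounds \eqref{xy monotonicity}/\eqref{xy monotonicity 2} with the \emph{same} $\kappa_x,\kappa_y$ (the base contributing monotonicity exactly $-\kappa_x$, $-\kappa_y$). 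At $\lambda=0$ the system decouples: the forward SDE $dX=(-\kappa_x X+b)\,dt+s\,dW$ is uniquely solvable in $L^{2,K}$ by Lemma \ref{x propostion} because $K=\tfrac{\kappa_x+\kappa_y}{2}<\kappa_x$, and the backward equation $dY=(-\kappa_y Y+f)\,dt+Z\,dW$ is uniquely solvable by Lemma \ref{bsde lemma} because $K>\kappa_y$; both reductions use $\kappa_x-\kappa_y>0$.

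The core is an a priori estimate. Advancing from $\lambda_0$ to $\lambda_0+\delta$ is realized as a fixed point of the map sending an iterate $\Theta^{\mathrm{old}}$ to the unique solution of the $\lambda_0$-system whose source is augmented by $\delta$ times the Lipschitz terms evaluated at $\Theta^{\mathrm{old}}$. Given two such solutions $\Theta^1,\Theta^2$, write $\hat\Theta=\Theta^1-\Theta^2$ and apply It\^o's formula to $\langle \hat X_t,G^\top\hat Y_t\rangle e^{2Kt}$. Because $2K=\kappa_x+\kappa_y$, the $2K$ cross-variation term cancels against the $(1-\lambda_0)(\kappa_x+\kappa_y)$ contribution of the base and the drift collects exactly into $\lambda_0$ times the left-hand side of \eqref{monotonicity} (or \eqref{monotonicity2}); the boundary term at infinity is killed by Lemma \ref{infinite value} together with Cauchy--Schwarz, and the initial term vanishes since $\hat X_0=0$. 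This yields a bound on $\lambda_0\beta_2\int_0^\infty\phi_2\,dt$ (Case 1) or $\lambda_0\beta_1\int_0^\infty\phi_1\,dt$ (Case 2) in terms of $\delta$ and the coupling terms. I would then run It\^o on $|\hat X_t|^2 e^{2Kt}$ and $|\hat Y_t|^2 e^{2Kt}$ separately: the forward estimate, using the $B$- and $\sigma$-bounds of \eqref{xy monotonicity}/\eqref{xy monotonicity 2}, has coercivity constant bounded below by $\kappa_x-\kappa_y-l_\sigma>0$, while the backward estimate, using the $F$-bound, gives coercivity $\kappa_x-\kappa_y-\lambda_0 l_z\ge\kappa_x-\kappa_y-l_z>0$ for $\|\hat Y\|$ and $1-\lambda_0\gamma\ge1-\gamma>0$ for $\|\hat Z\|$. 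Feeding the $\phi$-bound into these two inequalities and applying Young's inequality to the coupling terms closes the loop and produces $\|\hat\Theta\|_{L^{2,K}}\le C\delta\,\|\hat\Theta^{\mathrm{old}}\|_{L^{2,K}}$ with $C$ independent of $\lambda_0$.

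With $C\delta<1$ for $\delta$ fixed and uniform in $\lambda_0$, the Banach fixed-point theorem transports unique solvability from $\lambda_0$ to $\lambda_0+\delta$; iterating finitely many times from the base case $\lambda=0$ reaches $\lambda=1$, which is \eqref{infinite FBSDE}. Uniqueness follows by specializing the same estimate to $\delta=0$ and identical data, forcing $\hat\Theta=0$. The existence bound \eqref{stability 1} is obtained by applying the estimate of the previous paragraph to the single solution $(X,Y,Z)$ against the zero process, tracking the inhomogeneous terms $(B,F,\sigma)(t,0,0,0,\delta_{0})$; the continuous-dependence estimate \eqref{stability 2} comes from the same computation applied to the difference of solutions of two coefficient sets, with the mismatches $B(\bar\Theta)-\bar B(\bar\Theta)$, $F(\bar\Theta)-\bar F(\bar\Theta)$, $\sigma(\bar\Theta)-\bar\sigma(\bar\Theta)$ and $\xi-\bar\xi$ playing the role of the source.

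I expect the main obstacle to be the a priori estimate itself: one must combine the three It\^o identities (the cross term controlling $\int\phi_1$ or $\int\phi_2$, and the two diagonal terms controlling $\|\hat X\|$ and $\|\hat Y\|,\|\hat Z\|$) so that, after absorbing $\phi$ through \eqref{xy monotonicity}/\eqref{xy monotonicity 2}, every coercivity constant stays strictly positive and uniform over $\lambda\in[0,1]$. This is exactly where the structural hypotheses $\kappa_x-\kappa_y>\max\{l_\sigma,l_z\}$ and $\gamma<1$ enter, together with the calibration $K=\tfrac{\kappa_x+\kappa_y}{2}$ that produces the cancellation in the cross term. Finally, handling Cases 1 and 2 of (H2)(ii) and both monotonicity directions \eqref{monotonicity}/\eqref{monotonicity2} requires running the argument twice with the signs of the $\phi$-estimates and the roles of $\beta_1,\beta_2$ interchanged.
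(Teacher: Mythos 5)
Your proposal is correct and follows essentially the same route as the paper: the same homotopy $\lambda B-(1-\lambda)\kappa_x X$, $\lambda F-(1-\lambda)\kappa_y Y$, $\lambda\sigma$ with source terms, the same decoupled base case solved via the SDE/BSDE lemmas under $\kappa_y<K<\kappa_x$, the same a priori estimate obtained by combining It\^o on the cross term $\langle e^{Kt}\hat X,G^\top e^{Kt}\hat Y\rangle$ (where $2K=\kappa_x+\kappa_y$ produces the cancellation and bounds $\lambda_0\beta_i\int\phi_i$) with It\^o on $|e^{Kt}\hat X|^2$ and $|e^{Kt}\hat Y|^2$ under (H2)(ii), and the same uniform-in-$\lambda_0$ contraction step followed by finite iteration, with \eqref{stability 2} read off from the estimate at $\lambda_0=1$ and \eqref{stability 1} from the zero-coefficient comparison. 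The only cosmetic difference is that you phrase the a priori estimate for two iterates of the fixed-point map, while the paper states it once (Lemma \ref{stability}) for two solutions with arbitrary coefficient sets and sources, so that it serves both the contraction and the final estimates; your closing paragraph recovers exactly this dual use.
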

\begin{remark}
    It is easy to verify that \eqref{kxkyk} is equivalent to 
\begin{equation}\label{equivalentkxkyk}
\kappa_{y}+\frac{l_{z}}{2}<K<\kappa_{x}-\frac{l_{\sigma}}{2} \quad \text{and} \quad K = \frac{\kappa_{x}+\kappa_{y}}{2}.
    \end{equation}
\end{remark}
We introduce a family of infinite horizon FBSDEs parameterized by $\lambda \in[0,1]$:
\begin{equation}\label{parameter FBSDE}
\left\{\begin{aligned}
d X_t^\lambda &=\Big[\lambda B\left(t, X_t^\lambda, Y_t^\lambda, Z_{t}^{\lambda},\mathcal{L}\left(X_t^\lambda, Y_t^\lambda, Z_{t}^{\lambda}\right)\right)-(1-\lambda)\kappa_{x}X_{t}^{\lambda}+\mathcal{I}^{B}_{t}\Big]  d t\\
& \quad +\Big[\lambda\sigma\left(t, X_t^\lambda, Y_t^\lambda, Z_{t}^{\lambda}, \mathcal{L}\left(X_t^\lambda, Y_t^\lambda, Z_{t}^{\lambda}\right)\right)+\mathcal{I}^{\sigma}_{t}\Big] d W_{t}, \\
d Y_t^\lambda &=\Big[\lambda F\left(t, X_t^\lambda, Y_t^\lambda, Z_{t}^{\lambda}, \mathcal{L}\left(X_t^\lambda, Y_t^\lambda, Z_{t}^{\lambda}\right)\right)-(1-\lambda)\kappa_{y} Y_{t}^{\lambda}+\mathcal{I}^{F}_{t}\Big]  d t+Z_t^\lambda  d W_{t}, \\
X_0^\lambda&=\xi, 
\end{aligned}\right.    
\end{equation}
where $\xi \in  L^{2}_{\mathcal{F}_{0}}(\Omega;\mathbb{R}^{n})$ and $(\mathcal{I}^{B},\mathcal{I}^{F},\mathcal{I}^{\sigma})$ are arbitrary processes in $L_{\mathbb{F}}^{2,K}(0, \infty; \mathbb{R}^{n+m+n\times d})$. Note that when $\lambda=1$, $ \mathcal{I}^{B} \equiv 0$, $ \mathcal{I}^{F} \equiv 0$, $ \mathcal{I}^{\sigma} \equiv 0$, \eqref{parameter FBSDE} becomes \eqref{infinite FBSDE}, and when $\lambda=0$, FBSDE \eqref{parameter FBSDE} is reduced to 
\begin{equation}\label{initial FBSDE}
  \left\{\begin{aligned}
      d X_t^0 &=(-\kappa_{x} X_{t}^{0}+\mathcal{I}^{B}_{t})  d t+\mathcal{I}^{\sigma}_{t}  d W_t, \\
d Y_t^0&=(-\kappa_{y} Y_{t}^{0}+\mathcal{I}^{F}_{t}) d t+Z_t^0   d W_t, \\
X_0^0&=\xi.
  \end{aligned}
\right.  
\end{equation}
 It is clear that FBSDE \eqref{initial FBSDE} is in a decoupled form and we can solve the SDE and BSDE separately. As a direct application of Lemma \ref{x propostion} and Lemma \ref{bsde lemma}, we have the following result.
\begin{lemma}\label{initial lemma}
     Assume $\kappa_{y}<K<\kappa_{x}$, then for any $(\mathcal{I}^{B},\mathcal{I}^{F},\mathcal{I}^{\sigma}) \in L^{2, K}_{\mathbb{F}}\left(0, \infty ; \mathbb{R}^{n+m+n\times d}\right)$ and $\xi \in L^{2}_{\mathcal{F}_{0}}(\Omega;\mathbb{R}^{n})$, FBSDE \eqref{initial FBSDE} admits a unique solution $(X^{0},Y^{0}, Z^{0})$ in $L^{2, K}_{\mathbb{F}}\left(0, \infty ; \mathbb{R}^{n+m+m \times d}\right)$.
\end{lemma}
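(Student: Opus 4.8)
The plan is to exploit the fact that the system \eqref{initial FBSDE} is fully decoupled: the forward component $X^0$ solves an autonomous linear SDE involving neither $Y^0$ nor $Z^0$, while the pair $(Y^0,Z^0)$ solves an autonomous linear BSDE involving neither $X^0$ nor the inhomogeneity of the forward equation. Hence it suffices to solve the two equations separately, by invoking Lemma \ref{x propostion} and Lemma \ref{bsde lemma} respectively, and then assemble the triple. The substance of the argument is simply checking that the degenerate coefficients meet the hypotheses of those two lemmas and that the two-sided bound $\kappa_y<K<\kappa_x$ matches the two growth thresholds exactly.

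First I would treat the forward equation as a special instance of the McKean--Vlasov SDE \eqref{mean field sde} on $[0,\infty)$ with drift $b(s,x,\mu)=-\kappa_x x+\mathcal{I}^{B}_s$ and diffusion $\sigma(s,x,\mu)=\mathcal{I}^{\sigma}_s$, neither of which carries a genuine mean-field dependence. One checks that Assumption \ref{assumption x} holds with Lipschitz constants $l_{bx}=|\kappa_x|$ and $l_{b\mu}=l_{\sigma x}=l_{\sigma \mu}=0$; the monotonicity \eqref{monotonicity x} holds with constant $\kappa_x$, since $\langle x-x',-\kappa_x(x-x')\rangle=-\kappa_x|x-x'|^2$; and the inhomogeneous terms satisfy $b(\cdot,0,\delta_{0_n})=\mathcal{I}^{B}\in L^{2,K}_{\mathbb{F}}$ and $\sigma(\cdot,0,\delta_{0_n})=\mathcal{I}^{\sigma}\in L^{2,K}_{\mathbb{F}}$ by hypothesis. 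Because all the diffusion-Lipschitz constants and $l_{b\mu}$ vanish, the threshold in Lemma \ref{x propostion} collapses to $K<\kappa_x$, which is exactly the right-hand inequality in the assumed bound. Lemma \ref{x propostion} then yields $X^0\in L^{2,K}_{\mathbb{F}}(0,\infty;\mathbb{R}^n)$, unique by the classical well-posedness of the SDE.

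Symmetrically, I would view the backward equation as an instance of the McKean--Vlasov BSDE \eqref{infinite bsde} with generator $f(s,y,z,\mu,\nu)=-\kappa_y y+\mathcal{I}^{F}_s$, which has no genuine $z$- or measure-dependence. Assumption \ref{assumption y} is verified with $l_y=|\kappa_y|$ and $l_z=l_{\mu_y}=l_{\mu_z}=0$; the one-sided condition \eqref{monotonicity y} holds with constant $\kappa_y$; and $f(\cdot,0,0,\delta_{0_m},\delta_{0_{m\times d}})=\mathcal{I}^{F}\in L^{2,K}_{\mathbb{F}}$. Since $l_{\mu_y}=l_z=l_{\mu_z}=0$, the threshold in Lemma \ref{bsde lemma} reduces to $K>\kappa_y$, precisely the left-hand inequality in the assumed bound. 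Lemma \ref{bsde lemma} then provides a unique $(Y^0,Z^0)\in L^{2,K}_{\mathbb{F}}(0,\infty;\mathbb{R}^{m+m\times d})$. Assembling the two pieces gives the unique solution $(X^0,Y^0,Z^0)$ in the required space. There is no hard analytic obstacle here; the only point warranting care is that uniqueness of the full triple is immediate precisely because the coupling has been switched off, so uniqueness of $X^0$ and of $(Y^0,Z^0)$ separately already forces uniqueness of the assembled solution.
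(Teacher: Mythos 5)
Your proof is correct and takes essentially the same route as the paper: the paper likewise observes that FBSDE \eqref{initial FBSDE} is decoupled and solves the forward SDE and the backward BSDE separately as a direct application of Lemma \ref{x propostion} and Lemma \ref{bsde lemma}. Your explicit verification that the degenerate coefficients satisfy Assumptions \ref{assumption x} and \ref{assumption y} with the thresholds collapsing to $K<\kappa_{x}$ and $K>\kappa_{y}$ merely spells out the details the paper leaves implicit.
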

Next, for any $\lambda_{0} \in[0,1]$, we shall establish an a priori estimate for FBSDE \eqref{parameter FBSDE} which plays a key role in the method of continuation.
\begin{lemma}\label{stability}
Let $(\xi,B,F,\sigma)$, $(\bar{\xi},\bar{B},\bar{F},\bar{\sigma})$ satisfy Assumptions \textnormal{(H1)} and \textnormal{(H2)}, and $\left(\mathcal{I}^B, \mathcal{I}^F, \mathcal{I}^{\sigma}\right)$, $(\bar{\mathcal{I}}^B, \bar{\mathcal{I}}^{F}, \bar{\mathcal{I}}^{\sigma}) \in L^{2,K}_{\mathbb{F}}\left(0,\infty;\mathbb{R}^{n+m+n\times d}\right)$ and let
\begin{equation}
\kappa_{x}-\kappa_{y}>\max\{l_{\sigma},l_{z}\} \quad \text{and} \quad  K = \frac{\kappa_{x}+\kappa_{y}}{2}.
\end{equation}
Suppose  $(X,Y,Z),(\bar{X},\bar{Y},\bar{Z}) \in L^{2,K}_{\mathbb{F}}(0,\infty;\mathbb{R}^{n+m+m \times d})$ are solutions to FBSDE \eqref{parameter FBSDE} parameterized by $\lambda_{0}\in[0,1]$ with $\left(\xi,B,F,\sigma,\mathcal{I}^B, \mathcal{I}^F, \mathcal{I}^{\sigma}\right)$ and $(\bar{\xi},\bar{B},\bar{F},\bar{\sigma},\bar{\mathcal{I}}^B, \bar{\mathcal{I}}^{F}, \bar{\mathcal{I}}^{\sigma})$, respectively. Then  there exists a constant $C>0$ only depending on $|G|, \kappa_{x},\kappa_{y}$, $l_{\phi},l_{z},l_{\sigma}$ and $\beta_{1}$ or $\beta_{2}$,  independent of $\lambda_{0}$ such that 
\begin{equation}\label{stability equation}
\begin{aligned}
&\quad \left\| X -\bar{X}\right\|_{K}^{2}+\left\| Y-\bar{Y}\right\|_{K}^{2}+\left\| Z-\bar{Z} \right\|_{K}^{2}\\
& \leq  C \Big\{\mathbb{E}\left[|\xi-\bar{\xi}|^{2}\right]+\left\|\lambda_{0} \left(B(\cdot,\bar{\Theta})-\bar{B}(\cdot,\bar{\Theta})\right)+ (\mathcal{I}^B-\bar{\mathcal{I}}^{B}) \right\|_{K}^{2} \\
& \quad +\left\|\lambda_0 \left(\sigma(\cdot,\bar{\Theta})-\bar{\sigma}(\cdot,\bar{\Theta})\right)+(\mathcal{I}^{\sigma}-\bar{\mathcal{I}}^{\sigma})\right\|_{K}^{2} +\left\|\lambda_{0} \left(F(\cdot,\bar{\Theta})-\bar{F}(\cdot,\bar{\Theta})\right)+ (\mathcal{I}^F-\bar{\mathcal{I}}^{F}) \right\|_{K}^{2} \Big\},
\end{aligned}    
\end{equation}
where $\Theta = (X,Y,Z,\mathcal{L}(X,Y,Z))$, $\bar{\Theta} =(\bar{X},\bar{Y},\bar{Z},\mathcal{L}(\bar{X},\bar{Y},\bar{Z}))$ and $\|\cdot\|_{K}^{2}$ is defined as  \eqref{space}.
\end{lemma}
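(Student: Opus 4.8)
The plan is to adapt the continuation-method a priori estimate to the infinite horizon, the engine being It\^{o}'s formula for the cross product $e^{2Kt}\langle X_t-\bar X_t, G^{\top}(Y_t-\bar Y_t)\rangle$ together with separate energy estimates for the forward and backward components. Write $\Delta X=X-\bar X$, $\Delta Y=Y-\bar Y$, $\Delta Z=Z-\bar Z$, $\Delta\xi=\xi-\bar\xi$, and gather the source terms into $I^{B}=\lambda_{0}(B(\cdot,\bar\Theta)-\bar B(\cdot,\bar\Theta))+(\mathcal{I}^{B}-\bar{\mathcal{I}}^{B})$ and, analogously, $I^{F}$, $I^{\sigma}$; these are exactly the quantities whose $\|\cdot\|_{K}$-norms appear on the right of \eqref{stability equation}. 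Abbreviate $\Phi_{i}=\int_{0}^{\infty}e^{2Kt}\phi_{i}\,dt$.

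First I would apply It\^{o}'s formula to $e^{2Kt}\langle\Delta X_{t},G^{\top}\Delta Y_{t}\rangle$ on $[0,T]$, take expectations (the stochastic integrals are true martingales by $L^{2,K}$-integrability) and let $T\to\infty$. The boundary term at infinity vanishes because $\mathbb{E}[e^{2KT}|\Delta X_{T}|^{2}]\to0$ and $\mathbb{E}[e^{2KT}|\Delta Y_{T}|^{2}]\to0$ by the argument of Lemma \ref{infinite value} applied to the forward SDE and the backward equation solved by the differences, so the cross term vanishes by Cauchy--Schwarz; this leaves $-\mathbb{E}[\langle\Delta\xi,G^{\top}\Delta Y_{0}\rangle]$ equal to the time integral of the drift. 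The decisive algebraic point is that the It\^{o} term $2K\langle\Delta X,G^{\top}\Delta Y\rangle$ together with the damping $-(1-\lambda_{0})\kappa_{x}$ and $-(1-\lambda_{0})\kappa_{y}$ produced by the interpolation combine, via $2K=\kappa_{x}+\kappa_{y}$, into exactly $\lambda_{0}(\kappa_{x}+\kappa_{y})\langle\Delta X,G^{\top}\Delta Y\rangle$; adding this to the genuine coefficient differences reproduces verbatim the left-hand side of \eqref{monotonicity} with $\Theta_{1}=\Theta_{t}$, $\Theta_{2}=\bar\Theta_{t}$, and is therefore $\le-\lambda_{0}(\beta_{1}\phi_{1}+\beta_{2}\phi_{2})$. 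Bounding the residual terms $\langle I^{B},G^{\top}\Delta Y\rangle+\langle I^{F},G\Delta X\rangle+\langle I^{\sigma},G^{\top}\Delta Z\rangle$ by Young's inequality then yields
\begin{equation*}
\lambda_{0}(\beta_{1}\Phi_{1}+\beta_{2}\Phi_{2})\le\mathbb{E}[\langle\Delta\xi,G^{\top}\Delta Y_{0}\rangle]+\tfrac{\eta}{2}\big(\|\Delta X\|_{K}^{2}+\|\Delta Y\|_{K}^{2}+\|\Delta Z\|_{K}^{2}\big)+C_{\eta}\big(\|I^{B}\|_{K}^{2}+\|I^{F}\|_{K}^{2}+\|I^{\sigma}\|_{K}^{2}\big).
\end{equation*}

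Next I would derive two one-sided energy estimates. Applying It\^{o} to $e^{2Kt}|\Delta X_{t}|^{2}$ and using the $B$-monotonicity and $\sigma$-bound of (H2)(ii), the leading coefficient of $\|\Delta X\|_{K}^{2}$ is, up to an arbitrarily small Young loss, $2\kappa_{x}-2K-l_{\sigma}=(\kappa_{x}-\kappa_{y})-l_{\sigma}>0$ by \eqref{equivalentkxkyk}, producing a forward estimate $\|\Delta X\|_{K}^{2}\le C(\mathbb{E}|\Delta\xi|^{2}+\|I^{B}\|_{K}^{2}+\|I^{\sigma}\|_{K}^{2}+\lambda_{0}S_{X})$. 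Applying It\^{o} to $e^{2Kt}|\Delta Y_{t}|^{2}$ and using the $F$-monotonicity, the coefficient of $\|\Delta Y\|_{K}^{2}$ is $2K-2\kappa_{y}-l_{z}=(\kappa_{x}-\kappa_{y})-l_{z}>0$ and that of $\|\Delta Z\|_{K}^{2}$ is $1-\gamma>0$, giving a backward estimate $\|\Delta Y\|_{K}^{2}+\|\Delta Z\|_{K}^{2}\le C(\|I^{F}\|_{K}^{2}+\lambda_{0}S_{Y})$ together with, as a byproduct, the same bound for $\mathbb{E}|\Delta Y_{0}|^{2}$. The decisive feature of (H2)(ii) is the asymmetry of its two cases: in Case 1 ($\beta_{2}>0$) both sources $S_{X},S_{Y}$ are $\phi_{2}$ and the forward estimate carries no $\Delta Y,\Delta Z$; in Case 2 ($\beta_{1}>0$) the backward estimate carries no $\Delta X$, its source being $\phi_{1}$, while the forward source is $\|\Delta Y\|_{K}^{2}+\|\Delta Z\|_{K}^{2}$.

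Finally I would close the loop, distinguishing the two cases. In Case 1, substituting the forward estimate into the backward one gives $\|\Delta X\|_{K}^{2}+\|\Delta Y\|_{K}^{2}+\|\Delta Z\|_{K}^{2}\le C(\mathbb{E}|\Delta\xi|^{2}+\textrm{data}+\lambda_{0}\Phi_{2})$, and I feed in the step-one bound on $\lambda_{0}\beta_{2}\Phi_{2}$; in Case 2 the backward estimate is already free of $\Delta X$, so substituting it into the forward estimate produces no $\|\Delta X\|_{K}^{2}$ self-reference, and I feed in the bound on $\lambda_{0}\beta_{1}\Phi_{1}$. The $\Delta Y_{0}$ appearing in step one is controlled by the byproduct bound on $\mathbb{E}|\Delta Y_{0}|^{2}$ and absorbed through Young's inequality with a small weight. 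The main obstacle --- and the very reason (H2) is split into two asymmetric cases --- is this circular dependence among $\|\Delta X\|_{K}^{2}$, $\|\Delta Y\|_{K}^{2}+\|\Delta Z\|_{K}^{2}$ and $\Phi_{i}$: closing it requires that, after all substitutions, the self-coefficient of the total norm be strictly below $1$. I would secure this by first fixing the Young parameters in the energy estimates at values keeping the leading coefficients $(\kappa_{x}-\kappa_{y})-l_{\sigma}$, $(\kappa_{x}-\kappa_{y})-l_{z}$, $1-\gamma$ positive --- precisely the content of $\kappa_{x}-\kappa_{y}>\max\{l_{\sigma},l_{z}\}$ and $\gamma\in(0,1)$ --- thereby fixing $C$, and only afterwards choosing the step-one weight $\eta$ small relative to the strictly positive $\beta_{2}$ (Case 1) or $\beta_{1}$ (Case 2). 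That strict positivity is what legitimizes the final absorption and explains the dependence of $C$ on $\beta_{1}$ or $\beta_{2}$.
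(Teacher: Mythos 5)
Your proposal follows essentially the same route as the paper's own proof: It\^{o}'s formula applied to the cross term $e^{2Kt}\langle X_t-\bar X_t, G^{\top}(Y_t-\bar Y_t)\rangle$, using the cancellation $2K-(1-\lambda_0)(\kappa_x+\kappa_y)=\lambda_0(\kappa_x+\kappa_y)$ to invoke (H2)(i) and bound $\lambda_0(\beta_1\Phi_1+\beta_2\Phi_2)$; then separate energy estimates for $|e^{Kt}(X_t-\bar X_t)|^2$ and $|e^{Kt}(Y_t-\bar Y_t)|^2$ via (H2)(ii) with leading coefficients $(\kappa_x-\kappa_y)-l_\sigma$, $(\kappa_x-\kappa_y)-l_z$ and $1-\gamma$; and finally the case-dependent substitution and absorption with the Young weight chosen small relative to $\beta_2$ (Case 1) or $\beta_1$ (Case 2), exactly as in the paper. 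The only cosmetic gaps are that you treat only the variant \eqref{monotonicity} of (H2)(i) and not the reversed inequality \eqref{monotonicity2} (which the paper dispatches "with similar arguments"), and that in Case 1 the $F$-monotonicity source actually contains $\mathbb{E}[|X_1-X_2|^2]$ in addition to $\phi_2$, a detail your substitution of the forward estimate into the backward one already accommodates.
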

\begin{proof}
The whole proof will be splitted into two cases according to Assumption (H2). Before splitting the proof, we first do some pretreatments for both cases.\\
First, we can represent the FBSDE \eqref{parameter FBSDE} parameterized by $\lambda_{0}$ in the following form:
    \begin{equation}
        \begin{aligned}
        d e^{Kt}X_t & =\left[\lambda_{0} e^{Kt} B\left(t, X_t, Y_t, Z_{t},\mathcal{L}\left(X_t, Y_t, Z_{t}\right)\right)+(K-(1-\lambda_{0})\kappa_{x})e^{Kt}X_{t}+e^{Kt}\mathcal{I}^{B}_{t}\right] dt\\
        &\quad + \left[\lambda_{0} e^{Kt} \sigma\left(t, X_t, Y_t, Z_{t}, \mathcal{L}\left(X_t, Y_t, Z_{t}\right)\right)+e^{Kt}\mathcal{I}_t^\sigma\right]d W_t\\
        de^{Kt} Y_t & = \left[\lambda_{0} e^{Kt}F\left(t, X_t, Y_t, Z_{t},\mathcal{L}\left(X_t, Y_t, Z_{t}\right)\right)+(K-(1-\lambda_{0})\kappa_{y})e^{Kt}Y_{t}+e^{Kt}\mathcal{I}^{F}_{t}\right] dt+e^{Kt}Z_{t}dW_{t}.
         \end{aligned}
    \end{equation}
By applying  It\^{o}'s formula to $\left\langle e^{Kt}(Y_t-\bar{Y}_t), Ge^{Kt}\left(X_t-\bar{X}_t\right)\right\rangle$ on the time interval $[0,T]$, we obtain that
\begin{equation}
\begin{aligned}
& \quad\mathbb{E}\left[\left\langle e^{KT}(Y_{T}-\bar{Y}_{T}), Ge^{KT}(X_{T}-\bar{X}_{T})\right\rangle\right]-\mathbb{E}\left[\left\langle Y_{0}-\bar{Y}_{0},G(\xi-\bar{\xi})\right\rangle\right] \\
& =\mathbb{E}\int_0^T \Bigg[\big\langle\lambda_{0}e^{Kt}\left(B(t,\Theta_{t})-\bar{B}(t,\bar{\Theta}_{t})\right)+(K-(1-\lambda_{0})\kappa_{x})e^{Kt}(X_{t}-\bar{X}_{t})+e^{Kt}(\mathcal{I}^{B}_{t}-\bar{\mathcal{I}}^{B}_{t}),G^{\top}e^{Kt}(Y_{t}-\bar{Y}_{t})\big\rangle\\
&\quad \quad +\left\langle\lambda_{0}e^{Kt}\left(F(t,\Theta_{t})-\bar{F}(t,\bar{\Theta}_{t})\right)+(K-(1-\lambda_{0})\kappa_{y})e^{Kt}(Y_{t}-\bar{Y}_{t})+e^{Kt}(\mathcal{I}^{F}_{t}-\bar{\mathcal{I}}^{F}_{t}),Ge^{Kt}(X_{t}-\bar{X}_{t})\right\rangle\\
&\quad \quad +\left\langle\lambda_{0}e^{Kt}\left(\sigma(t,\Theta_{t})-\bar{\sigma}(t,\bar{\Theta}_{t})\right)+e^{Kt}(\mathcal{I}^{\sigma}_{t}-\bar{\mathcal{I}}^{\sigma}_{t}),G^{\top}e^{Kt}(Z_{t}-\bar{Z}_{t})\right\rangle \Bigg]
dt.
\end{aligned}    
\end{equation}
Then, by adding and subtracting the terms $ B\left(t,\bar{\Theta}_t\right)$, $ F\left(t,\bar{\Theta}_t\right)$, $ \sigma\left(t,\bar{\Theta}_t\right)$, we can deduce that 
\begin{equation}
\begin{aligned}
& \quad \mathbb{E}\left[\left\langle e^{KT}(Y_{T}-\bar{Y}_{T}), Ge^{KT}(X_{T}-\bar{X}_{T})\right\rangle\right]-\mathbb{E}\left[\left\langle Y_{0}-\bar{Y}_{0},G(\xi-\bar{\xi})\right\rangle\right]\\
 & =  \mathbb{E}\int_{0}^{T}\Bigg[\big\langle\lambda_{0}e^{Kt}\left(B(t,\Theta_{t})-B(t,\bar{\Theta}_{t})\right),G^{\top}e^{Kt}(Y_{t}-\bar{Y}_{t})\big\rangle\\
 &\quad \quad \quad+\big\langle\lambda_{0}e^{Kt}\left(F(t,\Theta_{t})-F(t,\bar{\Theta}_{t})\right),Ge^{Kt}(X_{t}-\bar{X}_{t})\big\rangle \\
&\quad \quad\quad +\left\langle\lambda_0 e^{Kt}\left(\sigma(t,\Theta_t)-\sigma(t,\bar{\Theta}_t)\right), G^{\top}e^{Kt}\left(Z_t-\bar{Z}_t\right)\right\rangle\\
&\quad \quad\quad+\big(2K-(1-\lambda_{0})(\kappa_{x}+\kappa_{y})\big)\left\langle e^{Kt}\left(X_{t}-\bar{X}_{t}\right),G^{\top}e^{Kt}\left(Y_{t}-\bar{Y}_{t}\right) \right\rangle\\
&\quad \quad\quad+ \left\langle\lambda_{0}e^{Kt}\left(B(t,\bar{\Theta}_{t})-\bar{B}(t,\bar{\Theta}_{t})\right)+e^{Kt}\left(\mathcal{I}^{B}_{t}-\bar{\mathcal{I}}^{B}_{t}\right),G^{\top}e^{Kt}\left(Y_{t}-\bar{Y}_{t}\right)\right\rangle\\
&\quad \quad\quad+\left\langle\lambda_{0}e^{Kt}\left(F(t,\bar{\Theta}_{t})-\bar{F}(t,\bar{\Theta}_{t})\right)+e^{Kt}\left(\mathcal{I}^{F}_{t}-\bar{\mathcal{I}}^{F}_{t}\right),Ge^{Kt}\left(X_{t}-\bar{X}_{t}\right)\right\rangle\\
&\quad\quad \quad+\left\langle\lambda_0 e^{Kt}\left(\sigma(t,\bar{\Theta}_t)-\bar{\sigma}(t,\bar{\Theta}_t)\right)+e^{Kt}( \mathcal{I}_t^{\sigma}-\bar{\mathcal{I}}_{t}^{\sigma}), G^{\top}e^{Kt}\left(Z_t-\bar{Z}_t\right)\right\rangle \Bigg] dt.
\end{aligned}
\end{equation}
The monotonicity condition \eqref{monotonicity} and $K = (\kappa_{x}+\kappa_{y})/2$ work together to reduce the above equation to 
\begin{equation}
\begin{aligned}
& \quad \mathbb{E}\left[\left\langle e^{KT}(Y_{T}-\bar{Y}_{T}), Ge^{KT}(X_{T}-\bar{X}_{T})\right\rangle\right]-\mathbb{E}\left[\left\langle Y_{0}-\bar{Y}_{0},G(\xi-\bar{\xi})\right\rangle\right]\\
 & \leq \mathbb{E}\int_{0}^{T}\Bigg[\left\langle\lambda_{0}e^{Kt}\left(B(t,\bar{\Theta}_{t})-\bar{B}(t,\bar{\Theta}_{t})\right)+e^{Kt}(\mathcal{I}^{B}_{t}-\bar{\mathcal{I}}^{B}_{t}),G^{\top}e^{Kt}(Y_{t}-\bar{Y}_{t})\right\rangle\\
&\quad \quad +\left\langle\lambda_{0}e^{Kt}\left(F(t,\bar{\Theta}_{t})-\bar{F}(t,\bar{\Theta}_{t})\right)+e^{Kt}(\mathcal{I}^{F}_{t}-\bar{\mathcal{I}}^{F}_{t}),Ge^{Kt}(X_{t}-\bar{X}_{t})\right\rangle\\
&\quad \quad +\left\langle\lambda_0 e^{Kt}\left(\sigma\left(t,\bar{\Theta}_t\right)-\bar{\sigma}\left(t,\bar{\Theta}_t\right)\right)+e^{Kt}( \mathcal{I}_t^{\sigma}-\bar{\mathcal{I}}_{t}^{\sigma}), G^{\top}e^{Kt}\left(Z_t-\bar{Z}_t\right)\right\rangle \Bigg]dt\\
&\quad -\lambda_0 \int_0^Te^{2Kt}\left(\beta_1\phi_1\left(t,X_t, \bar{X}_t\right)+\beta_2 \phi_2\left(t, \Theta_t, \bar{\Theta}_t\right)\right) d t.
\end{aligned}
\end{equation}
With help of the inequality $2ab \leq \varepsilon a^{2}+ (1/\varepsilon) b^{2}$ for any $\varepsilon>0$, Lemma \ref{infinite value}, and letting $T\rightarrow \infty$, we have 
\begin{equation}\label{general inequality}
 \begin{aligned}
& \quad\lambda_0 \int_0^{\infty}e^{2Kt}\left(\beta_1 \phi_1\left(t,X_t, \bar{X}_t\right)+\beta_2 \phi_2\left(t, \Theta_t, \bar{\Theta}_t\right)\right) d t \\
& \leq   \varepsilon\left(\left\|Y_0-\bar{Y}_0\right\|_{L^2}^2+\|X-\bar{X}\|_{K}^{2}+\|Y-\bar{Y}\|_{K}^{2}+\|Z-\bar{Z}\|_{K}^{2}\right)\\
&  \quad+\frac{|G|^{2}}{4\varepsilon}\Big\{\mathbb{E}\left[|\xi-\bar{\xi}|^{2}\right]+\left\|\lambda_{0} \left(B\left(\cdot,\bar{\Theta}\right)-\bar{B}\left(\cdot,\bar{\Theta}\right)\right)+ (\mathcal{I}^B-\bar{\mathcal{I}}^{B}) \right\|_{K}^{2} \\
&\quad +\left\|\lambda_0 \left(\sigma\left(\cdot,\bar{\Theta}\right)-\bar{\sigma}\left(\cdot,\bar{\Theta}\right)\right)+(\mathcal{I}^{\sigma}-\bar{\mathcal{I}}^{\sigma})\right\|_{K}^{2} +\left\|\lambda_{0} \left(F\left(\cdot,\bar{\Theta}\right)-\bar{F}\left(\cdot,\bar{\Theta}\right)\right)+ (\mathcal{I}^F-\bar{\mathcal{I}}^{F}) \right\|_{K}^{2} \Big\}.
\end{aligned}   
\end{equation}
Moreover, under monotonicity condition $\eqref{monotonicity2}$, we can still get \eqref{general inequality} with similar arguments. We have finished the pretreatment work. The next analysis in two cases will be based on \eqref{general inequality}. For simplicity of notations, from now we denote the right-hand side of \eqref{stability equation} as $\mathrm{RHS}$.

\textbf{Case 1}: $\beta_2>0$.
First, applying  It\^{o}'s formula to $|e^{Kt}(X_{t}-\bar{X}_{t})|^{2}$ on the time interval $[0,T]$, we have
\begin{equation}
\begin{aligned}
    &\quad \mathbb{E}\left[|e^{KT}(X_{T}-\bar{X}_{T})|^{2}\right]-\mathbb{E}\left[|\xi-\bar{\xi}|^{2}\right]\\
    & =\mathbb{E}\int_{0}^{T}\Bigg[2\left\langle e^{Kt}(X_{t}-\bar{X}_{t}),\lambda_{0}e^{Kt}\left(B(t,\Theta_{t})-B(t,\bar{\Theta}_{t})+B(t,\bar{\Theta}_{t})-\bar{B}(t,\bar{\Theta}_{t})\right)+e^{Kt}(\mathcal{I}^{B}_{t}-\bar{\mathcal{I}}^{B}_{t})\right. \\
    & \quad \left. \quad +\left(K-(1-\lambda_{0})\kappa_{x}\right)e^{Kt}(X_{t}-\bar{X}_{t}) \right\rangle+ \left|\lambda_{0}e^{Kt}\big(\sigma(t,\Theta_{t})-\sigma(t,\bar{\Theta}_{t}) +\sigma(t,\bar{\Theta}_{t})-\bar{\sigma}(t,\bar{\Theta}_{t})\big)\right.\\
    &\quad \left.\quad+e^{Kt}(\mathcal{I}^{\sigma}_{t}-\bar{\mathcal{I}}^{\sigma}_{t})\right|^{2}\Bigg]d t.
\end{aligned}
\end{equation}
Under the condition \eqref{xy monotonicity} and with the help of the inequality $2ab\leq \varepsilon_{1} a^{2} +(1/\varepsilon_{1})b^{2}$ for any $\varepsilon_{1}>0$, we can deduce that, 
\begin{equation}
    \begin{aligned}
&\mathbb{E}\left[|e^{KT}(X_{T}-\bar{X}_{T})|^{2}\right]-\mathbb{E}\left[|\xi-\bar{\xi}|^{2}\right]\\
&\leq \mathbb{E}\int_{0}^{T}\Bigg[
(2K-2\kappa_{x})e^{2Kt}|X_{t}-\bar{X}_{t}|^{2}+2\lambda_{0}l_{\phi}e^{2Kt}\phi_{2}(t,\Theta_{t},\bar{\Theta}_{t})+\varepsilon_{1} |e^{Kt}(X_{t}-\bar{X}_{t})|^{2}\\
&\quad \quad +\frac{1}{\varepsilon_{1}}|\lambda_{0}e^{Kt}\left(B(t,\bar{\Theta}_{t})-\bar{B}(t,\bar{\Theta}_{t})\right)+e^{Kt}(\mathcal{I}^{B}_{t}-\bar{\mathcal{I}}^{B}_{t})|^{2}+\lambda_{0}^{2}\left(l_{\sigma}|e^{Kt}(X_{t}-\bar{X}_{t})|^{2}+l_{\phi}e^{2Kt}\phi_{2}(t,\Theta_{t},\bar{\Theta}_{t})\right)\\
&\quad \quad  +|\lambda_{0}e^{Kt}\left(\sigma(t,\bar{\Theta}_{t})-\bar{\sigma}(t,\bar{\Theta}_{t})\right)+e^{Kt}(\mathcal{I}^{\sigma}_{t}-\bar{\mathcal{I}}^{\sigma}_{t})|^{2}+\varepsilon_{1} \lambda_{0}^{2}\left(l_{\sigma}|e^{Kt}(X_{t}-\bar{X}_{t})|^{2}+l_{\phi}e^{2Kt}\phi_{2}(t,\Theta_{t},\bar{\Theta}_{t})\right)\\
&\quad  \quad +\frac{1}{\varepsilon_{1}}|\lambda_{0}e^{Kt}\left(\sigma(t,\bar{\Theta}_{t})-\bar{\sigma}(t,\bar{\Theta}_{t})\right)+e^{Kt}(\mathcal{I}^{\sigma}_{t}-\bar{\mathcal{I}}^{\sigma}_{t})|^{2}\Bigg]d t
.
\end{aligned}
\end{equation}
Arrange terms, we can obtain that
\begin{equation}\label{case1x}
    \begin{aligned}
       & \quad \mathbb{E}[|e^{KT}(X_{T}-\bar{X}_{T})|^{2}]+\left(2\kappa_{x}-2K-l_{\sigma}-(1+l_{\sigma})\varepsilon_{1}\right)\mathbb{E}\int_{0}^{T}|e^{Kt}(X_{t}-\bar{X}_{t})|^{2}d t\\
       &\leq (2+\varepsilon_{1})l_{\phi}\lambda_{0}\int_{0}^{T}e^{2Kt}\phi_{2}(t,\Theta_{t},\bar{\Theta}_{t})d t+\frac{1}{\varepsilon_{1}}\mathbb{E}\int_{0}^{T}e^{2Kt}\left|(\lambda_{0} \left(B(t,\bar{\Theta}_{t})-\bar{B}(t,\bar{\Theta}_{t})\right)+\mathcal{I}^{B}_{t}-\bar{\mathcal{I}}^{B}_{t}\right|^{2}d t\\
       &\quad +\left(\frac{1}{\varepsilon_{1}}+1\right)\mathbb{E}\int_{0}^{T}e^{2Kt}\left|\lambda_{0}\left(\sigma(\bar{\Theta}_{t})-\bar{\sigma}(t,\bar{\Theta}_{t})\right)+\mathcal{I}^{\sigma}_{t}-\bar{\mathcal{I}}^{\sigma}_{t}\right|^{2}d t+\mathbb{E}[|\xi-\bar{\xi}|^{2}].
    \end{aligned}
\end{equation}
The condition  $K< \kappa_{x}-(l_{\sigma}/2)$ implies the existence of $\varepsilon_{1}$ such that $(1+l_{\sigma})\varepsilon_{1} \in (0,\kappa_{x}-K-(l_{\sigma}/2))$ and we denote
\begin{equation}
C_{1}:=\frac{(2+\varepsilon_{1})l_{\phi}}{(2\kappa_{x}-2K-l_{\sigma}-(1+l_{\phi})\varepsilon_{1})\beta_{2}},\quad C_{2}:=\frac{(1/\varepsilon_{1})+1}{2\kappa_{x}-2K-l_{\sigma}-(1+l_{\phi})\varepsilon_{1}}.
\end{equation}
When $\beta_{2}>0$, from \eqref{general inequality}, we obtain,
\begin{equation}\label{case1}
\begin{aligned}
  &\quad \lambda_0 \beta_{2} \int_0^{\infty} e^{2Kt}\phi_2\left(t, \Theta_t, \bar{\Theta}_t\right) d t\\
  &\leq \varepsilon\Big(\left\|Y_0-\bar{Y}_0\right\|_{L^2}^2+\|X-\bar{X}\|_{K}^{2}+\|Y-\bar{Y}\|_{K}^{2}+\|Z-\bar{Z}\|_{K}^{2}\Big)+\frac{|G|^{2}}{4\varepsilon} \mathrm{RHS}.
\end{aligned}    
\end{equation}
Combine \eqref{case1} and \eqref{case1x} and let $T\rightarrow \infty$, we have
\begin{equation}\label{xestimate1}
    \begin{aligned}
        &\quad \|X-\bar{X}\|_{K}^2
        \\
        &\leq C_{1}\left\{\varepsilon\Big(\left\|Y_0-\bar{Y}_0\right\|_{L^2}^2+\|X-\bar{X}\|_{K}^2+\|Y-\bar{Y}\|_{K}^2+\|Z-\bar{Z}\|_{K}^2\Big)
        +\frac{|G|^{2}}{4\varepsilon} \mathrm{RHS} \right\}+C_{2}\mathrm{RHS}.
    \end{aligned}
\end{equation}
Choose $\varepsilon$ small enough such that $0<\varepsilon < 1/C_{1}$, and denote
% \begin{equation}
%    \|X-\bar{X}\|_{K(T)}^2 \leq \frac{C_{1}\varepsilon}{1-C_{1}\varepsilon}\Big(\left\|Y_0-\bar{Y}_0\right\|_{L^2}^2+\|Y-\bar{Y}\|_{K(T)}^2+\|Z-\bar{Z}\|_{K(T)}^2\Big)+\frac{C_{1}|G|^{2}}{4\varepsilon(1-C_{1}\varepsilon)}\mathrm{RHS}+\frac{1+(1/\varepsilon_{1})}{1-C_{1}\varepsilon}\mathrm{RHS}.
% \end{equation}
\begin{equation}
   \bar{\varepsilon}:= \frac{C_{1}\varepsilon}{1-C_{1}\varepsilon},\quad C_{3}:= \frac{C_{1}|G|^{2}}{4\varepsilon(1-C_{1}\varepsilon)}+\frac{C_{2}}{1-C_{1}\varepsilon}.
\end{equation}
Then, \eqref{xestimate1} is reduced to 
\begin{equation}\label{X}
    \|X-\bar{X}\|_{K}^2 \leq \bar{\varepsilon} \Big(\left\|Y_0-\bar{Y}_0\right\|_{L^2}^2+\|Y-\bar{Y}\|_{K}^2+\|Z-\bar{Z}\|_{K}^2\Big)+C_{3}\mathrm{RHS}.
\end{equation}
Second, we apply It\^{o}'s formula to $|e^{Kt}(Y_{t}-\bar{Y}_{t})|^{2}$  on the time interval $[0,T]$ and we obtain
\begin{equation}\label{ito furmula y}
    \begin{aligned}
        &\quad \mathbb{E}[|e^{KT}(Y_{T}-\bar{Y}_{T})|^{2}]-\mathbb{E}[|Y_{0}-\bar{Y}_{0}|^{2} ]\\
&=\mathbb{E}\int_{0}^{T}\Big[2\left\langle e^{Kt}(Y_{t}-\bar{Y}_{t}),\lambda_{0}e^{Kt}\left(F(t,\Theta_{t})-\bar{F}(t,\bar{\Theta}_{t})\right)+(K-(1-\lambda_{0})\kappa_{y})e^{Kt}(Y_{t}-\bar{Y}_{t})\right.\\
        &\quad \quad \left.+e^{Kt}(\mathcal{I}^{F}_{t}-\bar{\mathcal{I}}^{F}_{t}) \right \rangle \Big]d t+\mathbb{E}\int_{0}^{T}|e^{Kt}(Z_{t}-\bar{Z}_{t})|^{2}d t \\
        & = \mathbb{E}\int_{0}^{T}\Big[2\big\langle e^{Kt}(Y_{t}-\bar{Y}_{t}),\lambda_{0}e^{Kt}\left(F(t,\Theta_{t})-F(t,\bar{\Theta}_{t}) +F(t,\bar{\Theta}_{t})-\bar{F}(t,\bar{\Theta}_{t})\right)\\
        &\quad\quad +(K-(1-\lambda_{0})\kappa_{y})e^{Kt}(Y_{t}-\bar{Y}_{t})+e^{Kt}(\mathcal{I}^{F}_{t}-\bar{\mathcal{I}}^{F}_{t}) \big\rangle\Big] d t+\mathbb{E}\int_{0}^{T}|e^{Kt}(Z_{t}-\bar{Z}_{t})|^{2}d t. 
    \end{aligned}
\end{equation}
Under the condition \eqref{xy monotonicity} and with the inequality, for any $\varepsilon_{2}>0$, $ 2ab\geq -\varepsilon_{2} a^{2} -(1/\varepsilon_{2})b^{2}$, we can deduce that,
\begin{equation}\label{case1 y}
    \begin{aligned}
         &\quad \mathbb{E}[|e^{KT}(Y_{T}-\bar{Y}_{T})|^{2}]-\mathbb{E}[|Y_{0}-\bar{Y}_{0}|^{2} ]\\
         & \geq -2l_{\phi}\lambda_{0}\mathbb{E}\int_{0}^{T}e^{2Kt}|X_{t}-\bar{X}_{t}|^{2}dt-2l_{\phi}\lambda_{0}\int_{0}^{T}e^{2Kt}\phi_{2}(t,\Theta_{t},\bar{\Theta}_{t})dt\\
         &\quad +(2K-2\kappa_{y}-\lambda_{0}l_{z})\mathbb{E}\int_{0}^{T}|e^{Kt}(Y_{t}-\bar{Y}_{t})|^{2}dt -\lambda_{0}\gamma\mathbb{E}\int_{0}^{T}|e^{Kt}(Z_{t}-\bar{Z}_{t})|^{2}dt+\mathbb{E}\int_{0}^{T}|e^{Kt}(Z_{t}-\bar{Z}_{t})|^{2}dt\\
         &\quad -\varepsilon_{2}\mathbb{E}\int_{0}^{T}|e^{Kt}(Y_{t}-\bar{Y}_{t})|^{2}dt -\frac{1}{\varepsilon_{2}}\mathbb{E}\int_{0}^{T}\left|\lambda_{0}e^{Kt}\left(F(t,\bar{\Theta}_{t})-\bar{F}(t,\bar{\Theta}_{t})\right)+e^{Kt}(\mathcal{I}^{F}_{t}-\bar{\mathcal{I}}^{F}_{t})\right|^{2} dt.
         \end{aligned}
\end{equation}
Arranging terms, letting $T\rightarrow \infty$ and substituting \eqref{case1} into \eqref{case1 y} yield
\begin{equation}\label{case1 yy}
   \begin{aligned}
       &\quad \mathbb{E}\left[|Y_{0}-\bar{Y}_{0}|^{2}\right]+(1-\gamma)\mathbb{E}\int_{0}^{\infty}|e^{Kt}(Z_{t}-\bar{Z}_{t})|^{2}d t +\left(2K-2\kappa_{y}-l_{z}-\varepsilon_{2}\right)\mathbb{E}\int_{0}^{\infty}|e^{Kt}(Y_{t}-\bar{Y}_{t})|^{2}d t\\
       & \leq 2l_{\phi}\mathbb{E}\int_{0}^{\infty}e^{2Kt}|X_{t}-\bar{X}_{t}|^{2}dt+\frac{2l_{\phi}\varepsilon}{\beta_{2}}\Big(\left\|Y_0-\bar{Y}_0\right\|_{L^2}^2+\|X-\bar{X}\|_{K}^2+\|Y-\bar{Y}\|_{K}^2+\|Z-\bar{Z}\|_{K}^2\Big)\\
       &\quad+\frac{l_{\phi}|G|^{2}}{2\varepsilon \beta_{2}}\mathrm{RHS} +\frac{1}{\varepsilon_{2}}\mathbb{E}\int_{0}^{\infty}|\lambda_{0}e^{Kt}(F(t,\bar{\Theta}_{t})-\bar{F}(t,\bar{\Theta}_{t})+(\mathcal{I}^{F}_{t}-\bar{\mathcal{I}}^{F}_{t}))|^{2}d t
   \end{aligned}
\end{equation}
The condition $K>\kappa_{y}+(l_{z}/2)$ implies the existence of the number $\varepsilon_{2}$ such that $\varepsilon_{2}\in (0,K-\kappa_{y}-(l_{z}/2))$. We can choose $\varepsilon$ small enough such that 
\begin{equation}\label{varepsilon}
  2K-2\kappa_{y}-l_{z}-\varepsilon_{2}-\frac{2l_{\phi}\varepsilon}{\beta_{2}}>0, \quad 1-\gamma-\frac{2l_{\phi}\varepsilon}{\beta_{2}}>0.
\end{equation}
And we denote
\begin{equation}
C_{4}:=\operatorname{min}\left\{2K-2\kappa_{y}-l_{z}-\varepsilon_{2}-\frac{2l_{\phi}\varepsilon}{\beta_{2}},1-\gamma-\frac{2l_{\phi}\varepsilon}{\beta_{2}}\right\}.
\end{equation}
Therefore, \eqref{case1 yy} is reduced to
\begin{equation}\label{y0yz}
    \left\|Y_0-\bar{Y}_0\right\|_{L^2}^2+\|Y-\bar{Y}\|_{K}^2+\|Z-\bar{Z}\|_{K}^2 \leq C_{5}\|X-\bar{X}\|_{K}^2+C_{6}\mathrm{RHS},
\end{equation}
where
\begin{equation}
    C_{5}:=\frac{2l_{\phi}(1+\frac{\varepsilon}{\beta_{2}})}{C_{4}}, \quad C_{6}:=\frac{\frac{l_{\phi}^{2}|G|^{2}}{2\varepsilon\beta_{2}}+\frac{1}{\varepsilon_{2}}}{C_{4}}.
\end{equation}
Choose $\varepsilon$ small enough such that $1-\bar{\varepsilon}C_{5} >0$ and combined with \eqref{X}, we obtain
\begin{equation}\label{Y}
    \|X-\bar{X}\|_{K}^2 \leq \frac{C_{6}\bar{\varepsilon}+C_{3}}{1-\bar{\varepsilon}C_{5}}\mathrm{RHS}.
\end{equation}
Substituting \eqref{Y} into \eqref{y0yz}, we have
\begin{equation}
    \left\| X -\bar{X}\right\|_{K}^2+\left\| Y-\bar{Y}\right\|_{K}^2+\left\| Z-\bar{Z} \right\|_{K}^2 \leq C\mathrm{RHS},
\end{equation}
where 
\begin{equation}
    C := \frac{C_{6}\bar{\varepsilon}+C_{3}}{1-\bar{\varepsilon}C_{5}}+\frac{C_{5}(C_{6}\bar{\varepsilon}+C_{3})}{1-\bar{\varepsilon}C_{5}}+C_{6}.
\end{equation}
The desired estimate \eqref{stability equation} is obtained in case 1.

\textbf{Case 2:} $\beta_{1}>0$. When $\beta_{1}>0$, from \eqref{general inequality}, we obtain,
\begin{equation}\label{case2}
\begin{aligned}
    &\quad \lambda_0 \beta_{1} \int_0^\infty e^{2Kt}\phi_1\left(t, X_{1}, X_{2}\right) d t\\
 & \leq  \varepsilon\Big(\left\|Y_0-\bar{Y}_0\right\|_{L^2}^2+\|X-\bar{X}\|_{K}^{2}+\|Y-\bar{Y}\|_{K}^{2}+\|Z-\bar{Z}\|_{K}^{2}\Big)+\frac{|G|^{2}}{4\varepsilon} \mathrm{RHS}.
\end{aligned}    
\end{equation}
We first apply It\^{o}'s formula to $|e^{Kt}(Y_{t}-\bar{Y}_{t})|^{2}$ on time interval $[0,T]$. Under condition \eqref{xy monotonicity 2}, combined with \eqref{case2} and letting $T\rightarrow \infty$, \eqref{ito furmula y} is reduced to
\begin{equation}\label{case2 yy}
   \begin{aligned}
       &\quad \mathbb{E}\left[|Y_{0}-\bar{Y}_{0}|^{2}\right]+(1-\gamma)\mathbb{E}\int_{0}^{\infty}|e^{Kt}(Z_{t}-\bar{Z}_{t})|^{2}d t +\left(2K-2\kappa_{y}-l_{z}-\varepsilon_{2}\right)\mathbb{E}\int_{0}^{\infty}|e^{Kt}(Y_{t}-\bar{Y}_{t})|^{2}d t\\
       & \leq \frac{2l_{\phi}\varepsilon}{\beta_{1}}\Big(\left\|Y_0-\bar{Y}_0\right\|_{L^2}^2+\|X-\bar{X}\|_{K}^2+\|Y-\bar{Y}\|_{K}^2+\|Z-\bar{Z}\|_{K}^2\Big)\\
       &\quad+\frac{l_{\phi}|G|^{2}}{2\varepsilon \beta_{1}}\mathrm{RHS} +\frac{1}{\varepsilon_{2}}\mathbb{E}\int_{0}^{\infty}|\lambda_{0}e^{Kt}\left(F(t,\bar{\Theta}_{t})-\bar{F}(t,\bar{\Theta}_{t})\right)+(\mathcal{I}^{F}_{t}-\bar{\mathcal{I}}^{F}_{t})|^{2}d t.
   \end{aligned}
\end{equation}
 We can choose $\varepsilon$ small enough such that 
\begin{equation}\label{varepsilon 2}
  2K-2\kappa_{y}-l_{z}-\varepsilon_{2}-\frac{2l_{\phi}\varepsilon}{\beta_{1}}>0, \quad 1-\gamma-\frac{2l_{\phi}\varepsilon}{\beta_{1}}>0,
\end{equation}
and denote
\begin{equation}
C_{7}:=\operatorname{min}\left\{2K-2\kappa_{y}-l_{z}-\varepsilon_{2}-\frac{2l_{\phi}\varepsilon}{\beta_{1}},1-\gamma-\frac{2l_{\phi}\varepsilon}{\beta_{1}}\right\}.
\end{equation}
Then, we obtain 
\begin{equation}\label{case 2 yyz}
     \left\|Y_0-\bar{Y}_0\right\|_{L^2}^2+\|Y-\bar{Y}\|_{K}^2+\|Z-\bar{Z}\|_{K}^2 \leq \tilde{\varepsilon}\|X-\bar{X}\|_{K}^2+C_{8}\mathrm{RHS},
\end{equation}
where \begin{equation}
   \tilde{\varepsilon}:= \frac{2l_{\phi}\varepsilon}{\beta_{1}C_{7}},\quad C_{8}:= \frac{\frac{l_{\phi}^{2}|G|^{2}}{2\varepsilon\beta_{1}}+\frac{1}{\varepsilon_{2}}}{C_{7}}.
\end{equation}
Next, we apply It\^{o}'s formula to $|e^{Kt}(X_{t}-\bar{X}_{t})|^{2}$ on time interval $[0,T]$. Under condition \eqref{xy monotonicity 2}, by similar estimating argument with case 1, we can get
\begin{equation}\label{case2x}
    \begin{aligned}
       & \quad \mathbb{E}[|e^{KT}(X_{T}-\bar{X}_{T})|^{2}]+\left(2\kappa_{x}-2K-l_{\sigma}-(1+l_{\sigma})\varepsilon_{1}\right)\mathbb{E}\int_{0}^{T}|e^{Kt}(X_{t}-\bar{X}_{t})|^{2}d t\\
       &\leq \mathbb{E}[|\xi-\bar{\xi}|^{2}]+(2+\varepsilon_{1})l_{\phi}\int_{0}^{T}e^{2Kt}(|Y_{t}-\bar{Y}_{t}|^{2}+|Z_{t}-\bar{Z}_{t}|^{2})d t\\
       &\quad +\frac{1}{\varepsilon_{1}}\mathbb{E}\int_{0}^{T}e^{2Kt}\left|(\lambda_{0} \left(B(t,\bar{\Theta}_{t})-\bar{B}(t,\bar{\Theta}_{t})\right)+\mathcal{I}^{B}_{t}-\bar{\mathcal{I}}^{B}_{t}\right|^{2}d t\\
       &\quad +\left(\frac{1}{\varepsilon_{1}}+1\right)\mathbb{E}\int_{0}^{T}e^{2Kt}\left|\lambda_{0}\left(\sigma(t,\bar{\Theta}_{t})-\bar{\sigma}(t,\bar{\Theta}_{t})\right)+\mathcal{I}^{\sigma}_{t}-\bar{\mathcal{I}}^{\sigma}_{t}\right|^{2}d t.
    \end{aligned}
\end{equation}
Denote
\begin{equation}
C_{9}:=\frac{(2+\varepsilon_{1})l_{\phi}}{2\kappa_{x}-2K-l_{\sigma}-(1+l_{\phi})\varepsilon_{1}}. 
\end{equation}
Then, letting $T\rightarrow \infty$, \eqref{case2x} is reduce to 
\begin{equation}\label{case 2 xx}
        \|X-\bar{X}\|_{K}^2 \leq C_{9}(\|Y-\bar{Y}\|_{K}^2+\|Z-\bar{Z}\|_{K}^2)+C_{2}\mathrm{RHS}.
\end{equation}
Substituting \eqref{case 2 yyz} into \eqref{case 2 xx}, we have
\begin{equation}
     \|X-\bar{X}\|_{K}^2  \leq C_{9}(\tilde{\varepsilon}\|X-\bar{X}\|_{K}^2+C_{8}\mathrm{RHS})+C_{2}\mathrm{RHS}.
\end{equation}
By choosing $\varepsilon$ such that $1-C_{9}\tilde{\varepsilon}>0$, we have
\begin{equation}
    \|X-\bar{X}\|_{K}^2  \leq \frac{C_{9}C_{8}+C_{2}}{1-C_{9}\tilde{\varepsilon}} \mathrm{RHS}.
\end{equation}
Combined with \eqref{case 2 yyz}, we obtain
\begin{equation}
    \left\| X -\bar{X}\right\|_{K}^2+\left\| Y-\bar{Y}\right\|_{K}^2+\left\| Z-\bar{Z} \right\|_{K}^2 \leq \tilde{C}\mathrm{RHS}, 
\end{equation}
where 
$$
\tilde{C}:= \frac{(1+\tilde{\varepsilon})(C_{9}C_{8}+C_{2})}{1-C_{9}\tilde{\varepsilon}}+C_{8}.
$$
The desired estimates \eqref{stability equation} is obtained in case 2. Therefore, we obtain the estimate \eqref{stability equation} in two cases. The whole proof is completed.
\end{proof}

Based on the above a priori estimate, we give a continuation lemma.
\begin{lemma}\label{continuation lemma}
    Suppose Assumptions \textnormal{(H1)} and \textnormal{(H2)} hold and let
\begin{equation}
\kappa_{x}-\kappa_{y}>\operatorname{max}\{l_{\sigma},l_{z}\}\quad \text{ and }\quad  K = \frac{\kappa_{x}+\kappa_{y}}{2}.
\end{equation}
Then there exists a constant $\delta_{0}>0$ independent of $\lambda_{0}$ such that if for some $\lambda_{0} \in [0,1)$ and any $\left(\mathcal{I}^B, \mathcal{I}^F, \mathcal{I}^{\sigma}\right) \in L^{2,K}_{\mathbb{F}}\left(\mathbb{R}^{n+m+n\times d}\right)$, FBSDE \eqref{parameter FBSDE} admits a unique solution $(X,Y,Z) \in L^{2,K}_{\mathbb{F}}(0,\infty;$ $\mathbb{R}^{n+m+m\times d})$, then the same conclusion is also true for $\lambda= \lambda_{0}+\delta$ with $\delta \in [0,\delta_{0}]$ and $\lambda\leq 1$.
\end{lemma}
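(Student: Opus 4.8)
The plan is to run the classical continuation argument: rewrite the FBSDE at parameter $\lambda_0+\delta$ as one at parameter $\lambda_0$ with enriched inhomogeneous data, turn the $\lambda_0$-solvability hypothesis into a solution map $\Phi$, and use the a priori estimate of Lemma \ref{stability} (applied with \emph{identical} coefficients) to make $\Phi$ a contraction with a step $\delta_0$ that does not see $\lambda_0$.

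First I would record the algebraic splitting coming from $\lambda=\lambda_0+\delta$: since
$\lambda B-(1-\lambda)\kappa_x X=\lambda_0 B-(1-\lambda_0)\kappa_x X+\delta\big(B+\kappa_x X\big)$, and likewise
$\lambda F-(1-\lambda)\kappa_y Y=\lambda_0 F-(1-\lambda_0)\kappa_y Y+\delta\big(F+\kappa_y Y\big)$ and $\lambda\sigma=\lambda_0\sigma+\delta\sigma$, a triple $(X,Y,Z)$ solves \eqref{parameter FBSDE} at $\lambda_0+\delta$ with data $(\mathcal{I}^B,\mathcal{I}^F,\mathcal{I}^\sigma)$ if and only if it solves \eqref{parameter FBSDE} at $\lambda_0$ with the modified data
\[
\tilde{\mathcal{I}}^B=\mathcal{I}^B+\delta\big(B(\cdot,\Theta)+\kappa_x X\big),\quad
\tilde{\mathcal{I}}^\sigma=\mathcal{I}^\sigma+\delta\,\sigma(\cdot,\Theta),\quad
\tilde{\mathcal{I}}^F=\mathcal{I}^F+\delta\big(F(\cdot,\Theta)+\kappa_y Y\big),
\]
where $\Theta=(X,Y,Z,\mathcal{L}(X,Y,Z))$. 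This suggests defining a map $\Phi$ on $L^{2,K}_{\mathbb{F}}(0,\infty;\mathbb{R}^{n+m+m\times d})$ that sends $(x,y,z)$ to the unique solution of the $\lambda_0$-FBSDE whose data are the expressions above with $\Theta$ replaced by $(x,y,z,\mathcal{L}(x,y,z))$; fixed points of $\Phi$ are exactly the solutions at $\lambda_0+\delta$, so existence and uniqueness of the latter reduce to a fixed point statement for $\Phi$.

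Next I would verify $\Phi$ is well defined and contractive. By (H1)(i)--(ii) and the bound $\mathcal{W}_2(\mathcal{L}(\theta),\delta_{0})\le\|\theta\|_{L^2}$, the processes $B(\cdot,\theta),\sigma(\cdot,\theta),F(\cdot,\theta)$ lie in $L^{2,K}_{\mathbb{F}}$ whenever $(x,y,z)$ does, so the modified data stay in $L^{2,K}_{\mathbb{F}}$ and the $\lambda_0$-solvability hypothesis produces a unique image; thus $\Phi$ maps the Banach space into itself. For two inputs $(x^i,y^i,z^i)$ with images $(X^i,Y^i,Z^i)=\Phi(x^i,y^i,z^i)$, both images solve the \emph{same} $\lambda_0$-FBSDE (same $\xi$, same $B,F,\sigma$), so Lemma \ref{stability} applies with $\bar\xi=\xi$, $\bar B=B$, $\bar F=F$, $\bar\sigma=\sigma$: the coefficient-difference and initial-value terms in \eqref{stability equation} vanish, leaving only the input differences $\|\tilde{\mathcal{I}}^{B,1}-\tilde{\mathcal{I}}^{B,2}\|_K^2$, etc. Using the Lipschitz property of $B,F,\sigma$ together with $\mathcal{W}_2(\mathcal{L}(\theta^1_t),\mathcal{L}(\theta^2_t))^2\le\mathbb{E}|\theta^1_t-\theta^2_t|^2$, each such term is $\le C\delta^2\big(\|x^1-x^2\|_K^2+\|y^1-y^2\|_K^2+\|z^1-z^2\|_K^2\big)$, whence
\[
\|X^1-X^2\|_K^2+\|Y^1-Y^2\|_K^2+\|Z^1-Z^2\|_K^2\le C'\delta^2\big(\|x^1-x^2\|_K^2+\|y^1-y^2\|_K^2+\|z^1-z^2\|_K^2\big),
\]
with $C'$ depending only on $|G|,\kappa_x,\kappa_y,l_\phi,l_z,l_\sigma,l$ and $\beta_1$ or $\beta_2$.

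Finally I would set $\delta_0:=1/(2\sqrt{C'})$, so that for every $\delta\in[0,\delta_0]$ the factor $C'\delta^2\le 1/4<1$ and $\Phi$ is a contraction; its unique fixed point is the unique solution of \eqref{parameter FBSDE} at $\lambda_0+\delta$, and any solution there is a fixed point, giving uniqueness. The crucial point, and the main obstacle, is the uniformity of $\delta_0$ in $\lambda_0$: this rests entirely on the constant $C$ of Lemma \ref{stability} being independent of $\lambda_0$, so that $C'$ and hence $\delta_0$ can be fixed once and for all; the only extra care demanded by the mean-field setting is the (routine) domination of Wasserstein distances by $L^2$-distances when estimating the modified data $\tilde{\mathcal{I}}^B,\tilde{\mathcal{I}}^\sigma,\tilde{\mathcal{I}}^F$.
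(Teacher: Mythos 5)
Your proposal is correct and follows essentially the same route as the paper: the identical splitting $\lambda B-(1-\lambda)\kappa_x X=\lambda_0 B-(1-\lambda_0)\kappa_x X+\delta(B+\kappa_x X)$ (and its analogues for $F$, $\sigma$), the solution map built from the $\lambda_0$-solvability hypothesis, and Lemma \ref{stability} applied with identical coefficients so that only the modified-data differences survive, yielding the contraction factor $C'\delta^2$ with $C'$ independent of $\lambda_0$. The paper's proof is exactly this fixed-point argument, including your key observation that uniformity of $\delta_0$ comes from the $\lambda_0$-independence of the constant in Lemma \ref{stability}.
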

\begin{proof}
    Let $\delta_{0}>0$ be determined later, and $\delta \in [0,\delta_{0}]$. For any $\left(\mathcal{I}^B, \mathcal{I}^{F}, \mathcal{I}^{\sigma}\right) \in L^{2,K}_{\mathbb{F}}\left(0,\infty;\mathbb{R}^{n+m+n\times d} \right)$ and $\theta=(x,y,z) \in L^{2,K}_{\mathbb{F}}\left(0,\infty;\mathbb{R}^{n+m+m \times d} \right)$, we introduce an infinite horizon McKean-Vlasov
 FBSDE as follows:
\begin{equation}
\left\{\begin{aligned}
d X_t &=\Big[\lambda_{0} B\left(t, X_t, Y_t, Z_t, \mathcal{L}(X_t, Y_t, Z_t)\right)-(1-\lambda_{0})\kappa_{x} X_{t}+\tilde{\mathcal{I}}^{B}_{t}\Big] d t\\
& \quad +\Big[\lambda_{0}\sigma\left(t, X_t, Y_t, Z_t, \mathcal{L}\left(X_t, Y_t, Z_t\right)\right)+\tilde{\mathcal{I}}^{\sigma}_{t}\Big]d W_t, \\
d Y_t&=\Big[\lambda_{0}F\left(t, X_t, Y_t, Z_t, \mathcal{L}\left(X_t, Y_t,Z_{t}\right)\right)-(1-\lambda_{0})\kappa_{y} Y_{t}+\tilde{\mathcal{I}}^{F}_{t}\Big] d t+Z_t d W_t, \\
X_0&=\xi, 
\end{aligned}\right. \label{lambda0 FBSDE}  
\end{equation}
where 
\begin{equation}
\left\{
    \begin{aligned}
        \tilde{\mathcal{I}}^{B}_{t} &= \delta\big(B(t,x_{t},y_t,z_t,\mathcal{L}(x_{t},y_t,z_t))+\kappa_{x} x_t\big)+\mathcal{I}_{t}^{B},\\
         \tilde{\mathcal{I}}^{F}_{t} &= \delta\big(F(t,x_{t},y_t,z_t,\mathcal{L}(x_{t},y_t,z_t))+\kappa_{y} y_t\big)+\mathcal{I}_{t}^{F},\\
        \tilde{\mathcal{I}}^{\sigma}_{t} &= \delta\big(\sigma(t,x_{t},y_t,z_t,\mathcal{L}(x_{t},y_t,z_t)\big)+\mathcal{I}_{t}^{\sigma}.
    \end{aligned}\right.
    \end{equation}
    Since $(x,y,z)\in L_{\mathbb{F}}^{2,K}(0,\infty;\mathbb{R}^{n+m+m \times d})$ and $\left(\mathcal{I}^B, \mathcal{I}^F, \mathcal{I}^{\sigma}\right) \in L^{2,K}_{\mathbb{F}}\left(\mathbb{R}^{n+m+n\times d}\right)$, by Assumption (H1), it is easy to check that $(\tilde{\mathcal{I}}^{B}, \tilde{\mathcal{I}}^{F}, \tilde{\mathcal{I}}^{\sigma}) \in L^{2,K}_{\mathbb{F}}(0,\infty;\mathbb{R}^{n+m+n\times d})$. Then, by our assumptions, FBSDE \eqref{lambda0 FBSDE} admits a unique solution $\Theta=(X,Y,Z)\in L^{2,K}_{\mathbb{F}}(0,\infty;\mathbb{R}^{n+m+m \times d})$.
Due to the arbitrariness of $\theta$, we can comprehend that FBSDE \eqref{lambda0 FBSDE} defines a mapping from the Banach space $L^{2,K}_{\mathbb{F}}\left(0,\infty; \mathbb{R}^{n+m+m \times d}\right)$ into itself:
$$ \Theta=\mathcal{M}_{\lambda_0+\delta}( \theta).
$$
In the following, we shall prove that this mapping is contractive when $\delta$ is small enough.

For any $\theta=(x,y,z),\ \theta^{\prime}=(x^{\prime},y^{\prime},z^{\prime}) \in L^{2, K}_{\mathbb{F}}\left(0, \infty ; \mathbb{R}^{n+m+m \times d}\right)$, let $\Theta=(X,Y,Z) = \mathcal{M}_{\lambda_{0}+\delta}(\theta)$ and $\Theta^{\prime} = (X^{\prime},Y^{\prime},Z^{\prime}) = \mathcal{M}_{\lambda_{0}+\delta}(\theta^{\prime})$. By  Lemma \ref{stability}, there exists a constant $C>0$, independent of $\lambda_0$ such that
\begin{equation}
    \begin{aligned}
&\quad \left\| X  -X^{\prime}\right\|_{K}^2+\left\| Y-Y^{\prime}\right\|_{K}^2+\left\| Z-Z^{\prime} \right\|_{K}^2 \\
&\leq C\Big\{\left\|\delta\Big(B\left(\cdot,\theta, \mathcal{L}(\theta)\right)-B\left(\cdot, \theta^{\prime}, \mathcal{L}(\theta^{\prime})\right)+\kappa_{x}(x-x^{\prime})\Big)\right\|_{K}^2 \\
&\quad+\left\|\delta \Big(\sigma\left(\cdot,\theta, \mathcal{L}(\theta)\right)-\sigma\left(\cdot, \theta^{\prime}, \mathcal{L}(\theta^{\prime})\right)\Big)\right\|_{K}^2\\
&\quad+\left\|\delta\Big(F\left(\cdot,\theta, \mathcal{L}(\theta)\right)-F\left(\cdot, \theta^{\prime}, \mathcal{L}(\theta^{\prime})\right)+\kappa_{y}(y-y^{\prime})\Big)\right\|_{K}^2\Big\}.
\end{aligned}  
\end{equation}
Combined the fact that
\begin{equation}
\mathcal{W}_2\left(\mathcal{L}\left(x_t, y_t,z_{t}\right), \mathcal{L}\left(x_t^{\prime}, y_t^{\prime},z^{\prime}_{t}\right)\right) \leq \mathbb{E}\left[\left|x_t-x_t^{\prime}\right|^2\right]^{\frac{1}{2}}+\mathbb{E}\left[\left|y_t-y_t^{\prime}\right|^2\right]^{\frac{1}{2}}+\mathbb{E}\left[\left|z_t-z_t^{\prime}\right|^2\right]^{\frac{1}{2}},
\end{equation}
and  the Lipschitz continuity of functions $B,F,\sigma$, we have
\begin{equation}\label{contraction}
  \| X  -X^{\prime}\left\|_{K}^2+\right\| Y-Y^{\prime}\left\|_{K}^2+\right\| Z-Z^{\prime} \|_{K}^2 \leq 
   \bar{C} \delta^2\left( \| x  -x^{\prime}\left\|_{K}^2+\right\| y-y^{\prime}\left\|_{K}^2+\right\| z-z^{\prime} \|_{K}^2\right),
\end{equation}
    where $\bar{C}$ only depending on $|G|,\kappa_{x},\kappa_{y},l,l_{\sigma},l_{\phi},l_{z},\gamma, \beta_{1}$ or $\beta_{2}$ and independent of $\lambda_{0}$, which shows  we can choose $\delta_{0}$ independent of $\lambda_{0}$ such that $\bar{C}\delta^{2}<1$ when $\delta \leq \delta_{0}$. Thus $\mathcal{M}_{\lambda_{0}+\delta}$ is a contraction mapping and the fixed point is the unique solution of infinite horizon McKean-Vlasov FBSDE \eqref{parameter FBSDE} parameterized by $\lambda_{0}+\delta$.
\end{proof}

Now, we shall establish the well-posedness for infinite horizon FBSDE \eqref{infinite FBSDE} by applying Lemma \ref{initial lemma} and Lemma \ref{continuation lemma}.
\begin{proof}[Proof of Theorem \ref{global theorem}] 
When $\lambda = 0$, Lemma \ref{initial lemma} shows that FBSDE \eqref{parameter FBSDE}
is uniquely solvable in $L^{2,K}_{\mathbb{F}}(0,\infty;\mathbb{R}^{n+m+m \times d})$ for any
 $\left(\mathcal{I}^B, \mathcal{I}^{F}, \mathcal{I}^{\sigma}\right) \in L^{2,K}_{\mathbb{F}}\left(0,\infty;\mathbb{R}^{n+m+n\times d}\right)$. Based on this, Lemma \ref{continuation lemma} further implies that FBSDE \eqref{parameter FBSDE} is uniquely solvable for any $\lambda\in [0,1]$ and  any
 $\left(\mathcal{I}^B, \mathcal{I}^{F}, \mathcal{I}^{\sigma}\right) \in L^{2,K}_{\mathbb{F}}\left(0,\infty;\mathbb{R}^{n+m+n\times d}\right)$. Especially, when $\lambda = 1$ and $\left(\mathcal{I}^B, \mathcal{I}^{F}, \mathcal{I}^{\sigma}\right)\equiv 0$, \eqref{parameter FBSDE} coincides with the original FBSDE \eqref{infinite FBSDE}. Consequently, the unique solvability of \eqref{infinite FBSDE} is obtained. 

 The estimates $\eqref{stability 2}$ is followed from \eqref{stability equation} in Lemma \ref{stability} by letting $\lambda_{0} = 1$, $\left(\mathcal{I}^B, \mathcal{I}^{F}, \mathcal{I}^{\sigma}\right)\equiv 0$ and $\left(\bar{\mathcal{I}}^B, \bar{\mathcal{I}}^{F}, \bar{\mathcal{I}}^{\sigma}\right)\equiv 0$.

 Moreover, when $(\bar{B},\bar{F},\bar{\sigma}) \equiv 0$, $\xi = 0$, it is obvious $(0,0,0)$ is a solution to the corresponding FBSDE \eqref{infinite FBSDE}. Then we get the estimate \eqref{stability 1} from \eqref{stability 2}.
\end{proof}
\section{Infinite horizon mean field control problems}\label{section 3}
In this section, we investigate mean field control problems through the solvability results of infinite horizon McKean-Vlasov FBSDEs obtained in previous section. First, in subsection \ref{subsection 1}, we derive the corresponding infinite horizon FBSDE \eqref{mfc fbsde} by Pontryagin's stochastic maximum principle and solve the control problem given solutions to \eqref{mfc fbsde}. Then in subsection \ref{subsection 2}, we provide sufficient conditions for the existence of solutions to \eqref{mfc fbsde}. Let $A\in \mathbb{R}^{m}$ $(m\geq 1)$ be a convex control space. Suppose $b:[0,\infty)\times \mathbb{R}^{n}\times \mathcal{P}_{2}(\mathbb{R}^{n})\times A \rightarrow \mathbb{R}^{n}$, $\sigma:[0,\infty)\times \mathbb{R}^{n}\times \mathcal{P}_{2}(\mathbb{R}^{n})\times A \rightarrow \mathbb{R}^{n\times d}$, $f: [0,\infty)\times \mathbb{R}^{n}\times \mathcal{P}_{2}(\mathbb{R}^{n})\times A \rightarrow \mathbb{R}$ are three measurable functions. We work under the following assumption.
\begin{assumption}\label{assumption control x}
(i) $b(t, x, \mu, \alpha),\sigma(t,x,\mu,\alpha)$ are Lipschitz in $(x, \mu, \alpha)$ and $f(t,x,\mu,\alpha)$ is of at most quadratic growth in $(x,\mu,a)$. There exist positive constants $l_{bx},l_{b\mu},l_{\sigma x},l_{\sigma \mu},l_{\alpha}$ such that for any $t>0$, $\alpha,\alpha^{\prime} \in A$, $x,x^{\prime} \in \mathbb{R}^{n}$, $\mu, \mu^{\prime} \in \mathcal{P}_2(\mathbb{R}^{n})$,
\begin{equation}
\begin{aligned}
       |b(t, x,\mu,\alpha)-b(t, x^{\prime},\mu^{\prime},\alpha^{\prime})|  & \leq l_{bx}|x-x^{\prime}|+l_{\alpha}|\alpha-\alpha^{\prime}|+l_{b\mu} \mathcal{W}_2\left(\mu, \mu^{\prime}\right), \\
       |\sigma(t, x,\mu,\alpha)-\sigma(t, x^{\prime},\mu^{\prime},\alpha^{\prime})| &\leq l_{\sigma x}|x-x^{\prime}|+l_{\alpha}|\alpha-\alpha^{\prime}|
+l_{\sigma\mu} \mathcal{W}_2\left(\mu, \mu^{\prime}\right).
    \end{aligned}
\end{equation}
(ii) There exists a constant $K \in \mathbb{R}$ such that $\int_0^{\infty} e^{2Kt}\left|f\left(t, 0, \delta_{0_{n}}, 0\right)\right| d t<+\infty$ and $b\left(\cdot, 0, \delta_{0_{n}}, 0\right)\in L^{2,K}_{\mathbb{F}}(0,\infty;\mathbb{R}^{n})$.\\
(iii) There exists a constant $\kappa>K+\frac{(l_{\sigma x}+l_{\sigma \mu})^{2}}{2}+l_{b\mu}$ such that for any $t>0,\ \alpha \in A,\ \mu \in \mathcal{P}_2(\mathbb{R}^{n})$, $x, x^{\prime} \in \mathbb{R}^{n}$, it holds that
$$
\left \langle x-x^{\prime}, b(t, x, \mu, \alpha)-b\left(t, x^{\prime}, \mu, \alpha\right)\right \rangle \leq-\kappa\left|x-x^{\prime}\right|^2 .
$$  
\end{assumption}
Define $\mathcal{A}:=L^{2,K}_{\mathbb{F}}(0, \infty;A)$ to be the space of all admissible controls. For any control $\alpha \in \mathcal{A}$, it follows from Lemma \ref{x propostion} that under Assumption \ref{assumption control x}, the following controlled McKean-Vlasov SDE \eqref{general state dynamics}  admits a unique solution $X_{t}\in L^{2,K}_{\mathbb{F}}(0,\infty;\mathbb{R}^{n})$.

We consider the following mean field control problem: Minimize 
\begin{equation}\label{minimization}
J(\alpha):=\mathbb{E}\left[\int_0^{\infty} e^{2Kt} f\left(t, X_t, \mathcal{L}\left(X_t\right), \alpha_t\right) d t\right],
\end{equation}
over the set $\mathcal{A}$ of admissible control processes, which is finite under Assumption \ref{assumption control x}, subject to the dynamic constraint
\begin{equation}\label{general state dynamics}
  \left\{  \begin{aligned}
dX_t&=b\left(t, X_t, \mathcal{L}\left(X_t\right), \alpha_t\right) dt+\sigma(t,X_{t},\mathcal{L}\left(X_t\right),\alpha_{t}) d W_t, \\
X_0&=\xi.   
\end{aligned}\right.
\end{equation}
\subsection{Pontryagin's stochastic maximum principle}\label{subsection 1}
In this subsection, we aim to establish the Pontryagin's stochastic maximum principle for the infinite mean field control problem \eqref{minimization}-\eqref{general state dynamics}. When the volatility $\sigma$ of the state dynamics is a constant, the corresponding maximum principle was obtained in \cite{bayraktar2023solvability}. Now we extend the maximum principle to a more general state dynamics \eqref{general state dynamics}.

We define the Hamiltonian $\mathcal{H}$ by
\begin{equation}\label{Hamiltonian}
\mathcal{H}(t, x, \mu,y,z,\alpha):=b(t, x, \mu, \alpha)\cdot y+\sigma(t,x,\mu,\alpha)\cdot z+f(t, x, \mu, \alpha)+2Kx \cdot y,
\end{equation}
for $t\in[0,\infty),\ \alpha \in A,  x, y \in \mathbb{R}^{n},\ z \in \mathbb{R}^{n\times d},\ \mu\in\mathcal{P}_{2}(\mathbb{R}^{n})$, where the dot notation stands for the inner product in Euclidean space. Moreover, the Hamiltonian $\mathcal{H}$, which is assumed to be differentiable in $(x,\alpha,\mu)$, is said to be convex in $(x,\mu,\alpha)$ if for any $t\in[0,\infty),\  y \in \mathbb{R}^{n}, \ z \in \mathbb{R}^{n\times d}$, $(x,\alpha,\mu)$, $(x^{\prime},\alpha^{\prime},\mu^{\prime})\in \mathbb{R}^{n}\times A \times \mathcal{P}_{2}(\mathbb{R}^{n})$, we have
$$
\begin{aligned}
\mathcal{H}\left(t, x^{\prime}, \mu^{\prime}, \alpha^{\prime}, y,z\right) & \geq  \mathcal{H}(t, x, \mu, \alpha, y,z)+\partial_x \mathcal{H}(t, x, \mu,\alpha,y,z)\cdot \left(x^{\prime}-x\right) \\
& \quad+\partial_\alpha \mathcal{H}(t, x, \mu, \alpha, y,z)\cdot \left(\alpha^{\prime}-\alpha\right)+\tilde{\mathbb{E}}\left[\partial_\mu \mathcal{H}(t, x, \mu, \alpha, y,z)(\tilde{X})\cdot (\tilde{X}^{\prime}-\tilde{X})\right],
\end{aligned}
$$
where $\tilde{X}^{\prime}, \tilde{X}$ are square integrable random variables defined on $(\tilde{\Omega}, \tilde{\mathcal{F}}, \tilde{\mathbb{P}})$ and have distributions $\mu^{\prime}, \mu$ respectively.

We assume the existence of a function $(t, x, y,z,\mu) \rightarrow \hat{\alpha}(t,x,y,z,\mu) \in A$, which is Lipschitz-continuous with respect to $(x, y, z,\mu)$, uniformly in $t \in[0, \infty)$ such that:
$$
\hat{\alpha}(t,x, y,z ,\mu)=\underset{\alpha \in A}{\operatorname{argmin}} \mathcal{H}(t, x, \mu,y,z,\alpha), \quad t\in[0,\infty),\ x, y \in \mathbb{R}^{n},\ z \in \mathbb{R}^{n\times d},\ \mu\in\mathcal{P}_{2}(\mathbb{R}^{n}).
$$
The existence of such function was proven in the next subsection \ref{subsection 2} under specific assumptions on the drift $b$ and the running cost function $f$. Then, we introduce the following infinite horizon McKean-Vlasov FBSDE:
\begin{equation}\label{mfc fbsde}
\left\{\begin{aligned}
dX_t & =b\left(t, X_t,\mathcal{L}(X_{t}),\hat{\alpha}(t,X_{t},Y_{t},Z_{t},\mathcal{L}(X_{t}))\right) d t+\sigma(t,X_{t},\mathcal{L}(X_{t}),\hat{\alpha}(t,X_{t},Y_{t},Z_{t},\mathcal{L}(X_{t}))) dW_t, \\
dY_t & =-\partial_x \mathcal{H}\left(t, X_{t},\mathcal{L}(X_{t}),Y_{t},Z_{t},\hat{\alpha}(t,X_{t},Y_{t},Z_{t},\mathcal{L}(X_{t}))\right)dt+Z_t d W_t\\
&\quad -\tilde{\mathbb{E}}\left[\partial_\mu \mathcal{H}\left(t, \tilde{X}_{t},\mathcal{L}(X_{t}),\tilde{Y}_{t},\tilde{Z}_{t},\hat{\alpha}(t,\tilde{X}_{t},\tilde{Y}_{t},\tilde{Z}_{t},\mathcal{L}(X_{t}))\right)\left(X_t\right)\right] dt, \\
X_0 & =\xi,   
\end{aligned}\right. 
\end{equation}
where $(\tilde{X},\tilde{Y},\tilde{Z})$ is an independent copy of $(X,Y,Z)$ defined on the space $(\tilde{\Omega},\tilde{\mathcal{F}},\tilde{\mathbb{P}})$, which is an independent copy of $(\Omega,\mathcal{F},\mathbb{P})$ and $\tilde{\mathbb{E}}$ denotes the expectation on $(\tilde{\Omega},\tilde{\mathcal{F}},\tilde{\mathbb{P}})$. 
\begin{proposition}
\label{maximum principle}
    Let $(b, f)$ be differentiable in $(x, \mu, \alpha)$, Assumption \ref{assumption control x} holds and $\mathcal{H}$ be convex in $(x, \mu, \alpha)$, $\hat{\alpha}(\cdot,0,0,0,\delta_{0_{n}}) \in L_{\mathbb{F}}^{2,K}(0,\infty;\mathbb{R}^{n})$ and $\hat{\alpha}(t,x,y,z,\mu)$ is Lipschitz in $(x,y,z,\mu)$. Moreover, suppose infinite horizon McKean-Vlasov FBSDE \eqref{mfc fbsde} admits a unique solution $(X,Y,Z)\in L^{2,K}_{\mathbb{F}}(0,\infty;\mathbb{R}^{n+n+n \times d})$. Then we have that $J(\hat{\alpha})=\min _\alpha J(\alpha)$.
\end{proposition}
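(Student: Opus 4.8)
The plan is to establish the sufficiency direction of the maximum principle: for an arbitrary admissible control $\alpha \in \mathcal{A}$ with associated state $X^{\alpha}$ solving \eqref{general state dynamics}, I will show $J(\alpha)-J(\hat\alpha)\geq 0$, where $(X,Y,Z)$ is the given solution of \eqref{mfc fbsde} and $\hat\alpha_t:=\hat\alpha(t,X_t,Y_t,Z_t,\mathcal{L}(X_t))$. Set $\Delta X_t:=X^{\alpha}_t-X_t$ (so $\Delta X_0=0$), and write $\Delta b_t,\Delta\sigma_t$ for the increments of $b,\sigma$ between the controls $(X^{\alpha},\alpha)$ and $(X,\hat\alpha)$, and $\Delta\mathcal{H}_t:=\mathcal{H}(t,X^{\alpha}_t,\mathcal{L}(X^{\alpha}_t),Y_t,Z_t,\alpha_t)-\mathcal{H}(t,X_t,\mathcal{L}(X_t),Y_t,Z_t,\hat\alpha_t)$. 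The first step is to remove $f$ in favour of the Hamiltonian: solving \eqref{Hamiltonian} for $f$ gives $f(t,x,\mu,a)=\mathcal{H}(t,x,\mu,Y_t,Z_t,a)-b\cdot Y_t-\sigma\cdot Z_t-2Kx\cdot Y_t$, whence
$$J(\alpha)-J(\hat\alpha)=\mathbb{E}\int_0^{\infty}e^{2Kt}\big[\Delta\mathcal{H}_t-\Delta b_t\cdot Y_t-\Delta\sigma_t\cdot Z_t-2K\,\Delta X_t\cdot Y_t\big]\,dt.$$

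The second step is an integration by parts. I would apply It\^o's formula to $e^{2Kt}\langle Y_t,\Delta X_t\rangle$ on $[0,T]$, take expectations (the stochastic integrals are true martingales thanks to the $L^{2,K}_{\mathbb{F}}$ bounds on $X^{\alpha},X,Y,Z$), and let $T\to\infty$. The boundary term $\mathbb{E}[e^{2KT}\langle Y_T,\Delta X_T\rangle]$ vanishes in the limit: by Cauchy--Schwarz it is dominated by $\mathbb{E}[|e^{KT}Y_T|^2]^{1/2}\big(\mathbb{E}[|e^{KT}X^{\alpha}_T|^2]^{1/2}+\mathbb{E}[|e^{KT}X_T|^2]^{1/2}\big)$, and each factor tends to $0$ by the transversality Lemma \ref{infinite value}, applied to $X^{\alpha}$ and $X$ (both in $L^{2,K}_{\mathbb{F}}$ by Lemma \ref{x propostion}) and to $Y$. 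This produces the identity
$$0=\mathbb{E}\int_0^{\infty}e^{2Kt}\big[2K\,\Delta X_t\cdot Y_t+\Delta b_t\cdot Y_t+\Delta\sigma_t\cdot Z_t-\Delta X_t\cdot G_t\big]\,dt,$$
where $G_t:=\partial_x\mathcal{H}_t+\tilde{\mathbb{E}}[\partial_\mu\mathcal{H}(t,\tilde X_t,\mathcal{L}(X_t),\tilde Y_t,\tilde Z_t,\hat\alpha(t,\tilde X_t,\tilde Y_t,\tilde Z_t,\mathcal{L}(X_t)))(X_t)]$ is exactly the drift of the backward equation in \eqref{mfc fbsde}, and $\partial_x\mathcal{H}_t$ abbreviates evaluation at $(t,X_t,\mathcal{L}(X_t),Y_t,Z_t,\hat\alpha_t)$. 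Adding this identity to the previous display cancels the $\Delta b\cdot Y$, $\Delta\sigma\cdot Z$ and $2K\,\Delta X\cdot Y$ terms and leaves $J(\alpha)-J(\hat\alpha)=\mathbb{E}\int_0^{\infty}e^{2Kt}[\Delta\mathcal{H}_t-\Delta X_t\cdot G_t]\,dt$.

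The third step invokes convexity and minimality. Convexity of $\mathcal{H}$ in $(x,\mu,\alpha)$ gives, with $\Delta\tilde X_t:=\tilde X^{\alpha}_t-\tilde X_t$,
$$\Delta\mathcal{H}_t\geq \partial_x\mathcal{H}_t\cdot\Delta X_t+\partial_\alpha\mathcal{H}_t\cdot(\alpha_t-\hat\alpha_t)+\tilde{\mathbb{E}}\big[\partial_\mu\mathcal{H}_t(\tilde X_t)\cdot\Delta\tilde X_t\big].$$
Since $\hat\alpha_t$ minimizes $\alpha\mapsto\mathcal{H}(t,X_t,\mathcal{L}(X_t),Y_t,Z_t,\alpha)$ over the convex set $A$, the first-order condition yields $\partial_\alpha\mathcal{H}_t\cdot(\alpha_t-\hat\alpha_t)\geq 0$, so that term is discarded. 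Substituting this lower bound and cancelling $\partial_x\mathcal{H}_t\cdot\Delta X_t$ against the corresponding part of $\Delta X_t\cdot G_t$, I am reduced to
$$J(\alpha)-J(\hat\alpha)\geq \mathbb{E}\int_0^{\infty}e^{2Kt}\Big(\tilde{\mathbb{E}}\big[\partial_\mu\mathcal{H}_t(\tilde X_t)\cdot\Delta\tilde X_t\big]-\tilde{\mathbb{E}}\big[\partial_\mu\mathcal{H}(t,\tilde X_t,\mathcal{L}(X_t),\tilde Y_t,\tilde Z_t,\hat\alpha(t,\tilde X_t,\tilde Y_t,\tilde Z_t,\mathcal{L}(X_t)))(X_t)\big]\cdot\Delta X_t\Big)\,dt.$$

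I expect the main obstacle to be the final mean-field cancellation, namely that the remaining integrand integrates to zero for almost every $t$. This is where the precise structure of the McKean--Vlasov adjoint term in \eqref{mfc fbsde} is used: viewing the law of the enlarged variable $(X_t,Y_t,Z_t,X^{\alpha}_t)$ and its independent copy, both remaining expectations $\mathbb{E}\tilde{\mathbb{E}}[\cdots]$ are double integrals against the product of this law with itself, and relabelling the tilde and non-tilde dummy variables (Fubini and the symmetry of the product measure) shows they coincide. Hence their difference vanishes and $J(\alpha)\geq J(\hat\alpha)$ for every $\alpha\in\mathcal{A}$, i.e.\ $J(\hat\alpha)=\min_\alpha J(\alpha)$. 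The points requiring care are the justification that the It\^o martingale terms have zero expectation (via the $L^{2,K}_{\mathbb{F}}$ estimates), the transversality at infinity, and the interchange-of-copies identity; everything else is algebraic bookkeeping.
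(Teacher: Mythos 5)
Your proposal is correct and follows essentially the same route as the paper's own proof: rewrite the cost difference through the Hamiltonian, integrate by parts via It\^{o}'s formula applied to $e^{2Kt}\langle Y_t,\Delta X_t\rangle$, invoke convexity of $\mathcal{H}$ together with the first-order condition at the minimizer $\hat\alpha$, and cancel the two measure-derivative terms by Fubini and the exchange of independent copies. The only inaccuracy is your transversality claim: Lemma \ref{infinite value} is stated for forward McKean-Vlasov SDE solutions, so it covers $X^{\alpha}$ and $X$ but does not literally yield $\mathbb{E}[|e^{KT}Y_T|^2]\to 0$; the paper avoids this by extracting only a sequence $T_i\to\infty$ along which $\mathbb{E}\left[e^{2KT_i}\left(X_{T_i}-X^{\alpha}_{T_i}\right)\cdot Y_{T_i}\right]\to 0$ (which follows from the integrability in $t$ of $\mathbb{E}\left[|e^{Kt}Y_t|\,|e^{Kt}\Delta X_t|\right]$, by Cauchy--Schwarz on the $L^{2,K}_{\mathbb{F}}$ norms), and that weaker statement suffices for the argument.
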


\begin{proof}
Since we assume infinite horizon McKean-Vlasov FBSDE \eqref{mfc fbsde} admits a unique solution $(X,Y,Z)$, let us denote $\theta_{t}^{\wedge} := (X_{t},Y_{t},Z_{t})$, $\tilde{\theta}_{t}^{\wedge} := (\tilde{X}_{t},\tilde{Y}_{t},\tilde{Z}_{t})$, $\Theta_t^{\wedge}:=\left(\theta_{t}^{\wedge}, \mathcal{L}\left(X_t\right), \hat{\alpha}\left(t,\theta_{t}^{\wedge},\mathcal{L}\left(X_t\right)\right)\right)$ and $\tilde{\Theta}_{t}^{\wedge} := (\tilde{\theta}^{\wedge}_{t},\mathcal{L}(X_{t}),\hat{\alpha}(t,\tilde{\theta}_{t}^{\wedge},\mathcal{L}(X_{t})))$, where $(\tilde{X}_{t},\tilde{Y}_{t},\tilde{Z}_{t})$ is an independent copy of $(X_{t},Y_{t},Z_{t})$. For an arbitrary admissible control $\alpha^{\prime}$ and its associated process $X^{\prime}$, we have that
\begin{equation}
\begin{aligned}
J(\hat{\alpha})-J\left(\alpha^{\prime}\right)& =  \mathbb{E}\left[\int_0^{\infty} e^{2K t}\left(\mathcal{H}\left(t, X_t, \mathcal{L}\left(X_t\right), \hat{\alpha}_t, Y_t,Z_{t}\right)-\mathcal{H}\left(t, X_t^{\prime}, \mathcal{L}\left(X_t^{\prime}\right), \alpha_t^{\prime}, Y_t,Z_{t}\right)\right) d t\right] \\
&\quad -\mathbb{E}\left[\int_0^{\infty} e^{2K t}\left(b\left(t, X_t, \mathcal{L}\left(X_t\right), \hat{\alpha}_t\right)-b\left(t, X_t^{\prime}, \mathcal{L}\left(X_t^{\prime}\right), \alpha_t^{\prime}\right)\right)\cdot Y_t d t\right] \\
& \quad -\mathbb{E}\left[\int_0^{\infty} e^{2K t}\left(\sigma\left(t, X_t, \mathcal{L}\left(X_t\right),\hat{\alpha}_{t}\right)-\sigma\left(t, X_t^{\prime}, \mathcal{L}\left(X_t^{\prime}\right),\alpha_{t}^{\prime}\right)\right)\cdot Z_{t} d t\right] \\
&\quad -2K \mathbb{E}\left[\int_0^{\infty} e^{2K t}\left(X_t-X_t^{\prime}\right) \cdot Y_t d t\right].
\end{aligned}\label{equation1}
\end{equation}
It can be easily seen from Lemma \ref{infinite value}, that there exists a sequence of $T_i \rightarrow \infty$ such that $$\mathbb{E}\left[e^{2K T_i}\left(X_{T_i}-X_{T_i}^{\prime}\right) \cdot Y_{T_i}\right] \rightarrow 0.
$$ Applying It\^{o}'s formula to $e^{2Kt}\left(X_{t}-X_{t}^{\prime}\right) \cdot Y_{t}$ on time interval $[0,T_{i}]$, and letting $T_i \rightarrow \infty$, we obtain that
\begin{equation}
\begin{aligned}
& \quad \mathbb{E}{\left[\int_0^{\infty} e^{2K t}\left(X_t-X_t^{\prime}\right)\cdot \left(\partial_x \mathcal{H}\left(t, \Theta_t^{\wedge}\right)+\tilde{\mathbb{E}}\left[\partial_\mu \mathcal{H}(\tilde{\Theta}_t^{\wedge})\left(X_t\right)\right]\right) d t\right] } \\
 & = \mathbb{E}\left[\int_0^{\infty} e^{2K t}\left(2K\left(X_t-X_t^{\prime}\right)+b\left(t, X_t, \mathcal{L}\left(X_t\right), \hat{\alpha}_t\right)-b\left(t, X_t^{\prime}, \mathcal{L}\left(X_t^{\prime}\right), \alpha_t^{\prime}\right)\right)\cdot Y_t d t\right]\\
 &\quad + \mathbb{E}\left[\int_0^{\infty} e^{2K t}\left(\sigma\left(t, X_t, \mathcal{L}\left(X_t\right),\hat{\alpha}_{t}\right)-\sigma\left(t, X_t^{\prime}, \mathcal{L}\left(X_t^{\prime}\right),\alpha_{t}^{\prime}\right)\right)\cdot Z_t d t\right].
\end{aligned}\label{equation2}
\end{equation}
According to the convexity of $\mathcal{H}$ and the fact that $\hat{\alpha}_t=\operatorname{argmin}_{\alpha \in A} \mathcal{H}\left(t, X_t, \mathcal{L}\left(X_t\right), \alpha,Y_t,Z_{t}\right)$, it holds that
\begin{equation}\label{equation3}
 \begin{aligned}
&\mathcal{H}\left(t, X_t^{\prime}, \mathcal{L}\left(X_t^{\prime}\right), \alpha_t^{\prime}, Y_t,Z_{t}\right)-\mathcal{H}\left(t, X_t, \mathcal{L}\left(X_t\right), \hat{\alpha}_t, Y_t,Z_{t}\right)  \\
\geq & \left(X_t^{\prime}-X_t\right)\cdot \partial_x \mathcal{H}\left(t, \Theta_t^{\wedge}\right)+\tilde{\mathbb{E}}\left[\partial_\mu \mathcal{H}\left(t, \Theta_t^{\wedge}\right)(\tilde{X}_t) \cdot (\tilde{X}_t^{\prime}-\tilde{X}_t)\right] \\
& +\left(\alpha_t^{\prime}-\hat{\alpha}_t\right)\cdot \partial_\alpha \mathcal{H}\left(t, \Theta_t^{\wedge}\right) \\
\geq & \left(X_t^{\prime}-X_t\right) \cdot \partial_x \mathcal{H}\left(t, \Theta_t^{\wedge}\right)+\tilde{\mathbb{E}}\left[\partial_\mu \mathcal{H}\left(t, \Theta_t^{\wedge}\right)(\tilde{X}_t)\cdot (\tilde{X}_t^{\prime}-\tilde{X}_t)\right] .
\end{aligned}   
\end{equation}
Using Fubini's theorem and the fact $\tilde{\Theta}_{t}^{\wedge}$ is an independent copy of $\Theta^{\wedge}_{t}$, we have that
\begin{equation}\label{fubini}
\mathbb{E}\left[\left(X_t^{\prime}-X_t\right)  \cdot \tilde{\mathbb{E}}\left[\partial_\mu \mathcal{H}(\tilde{\Theta}_t^{\wedge})\left(X_t\right)\right]\right]=\mathbb{E}\left[\ \tilde{\mathbb{E}}\left[\partial_\mu \mathcal{H}\left(t, \Theta_t^{\wedge}\right)(\tilde{X}_t) \cdot (\tilde{X}_t^{\prime}-\tilde{X}_t)\right]\right].    
\end{equation}
Combined with \eqref{equation1}, \eqref{equation2}, \eqref{equation3} and \eqref{fubini}, we conclude that
\begin{equation}
 \begin{aligned}
J(\hat{\alpha})  -J\left(\alpha^{\prime}\right) & =  \mathbb{E}\left[\int_0^{\infty} e^{2K t}\left(\mathcal{H}\left(t, X_t, \mathcal{L}\left(X_t\right), \hat{\alpha}_t, Y_t,Z_{t}\right)-\mathcal{H}\left(t, X_t^{\prime}, \mathcal{L}\left(X_t^{\prime}\right), \alpha_t^{\prime}, Y_t,Z_{t}\right)\right) d t\right] \\
& \quad -\mathbb{E}\left[\int_0^{\infty} e^{2K t}\left(X_t-X_t^{\prime}\right)\cdot \left(\partial_x \mathcal{H}\left(t, \Theta_t^{\wedge}\right)+\tilde{\mathbb{E}}\left[\partial_\mu \mathcal{H}\left(\tilde{\Theta}_t^{\wedge}\right)\left(X_t\right)\right]\right) d t\right] \\
 & =  \mathbb{E}\left[\int_0^{\infty} e^{2K t}\left(\mathcal{H}\left(t, X_t, \mathcal{L}\left(X_t\right), \hat{\alpha}_t, Y_t,Z_{t}\right)-\mathcal{H}\left(t, X_t^{\prime}, \mathcal{L}\left(X_t^{\prime}\right), \alpha_t^{\prime}, Y_t,Z_{t}\right)\right) d t\right] \\
& \quad-\mathbb{E}\left[\int_0^{\infty} e^{2K t}\left(\left(X_t-X_t^{\prime}\right)  \cdot \partial_x \mathcal{H}\left(t, \Theta_t^{\wedge}\right)+\tilde{\mathbb{E}}\left[\partial_\mu \mathcal{H}\left(t, \Theta_t^{\wedge}\right)(\tilde{X}_t)\cdot (\tilde{X}_t-\tilde{X}_t^{\prime})\right]\right) d t\right]\\
&\leq 0.
\end{aligned}   
\end{equation}
\end{proof}
\subsection{Solvability of mean field control problems}\label{subsection 2}
In this subsection, we give sufficient conditions on the given data for the existence and uniqueness of solutions to infinite horizon FBSDE \eqref{mfc fbsde}. As it is most often the case in applications of the maximum principle, we choose $A = \mathbb{R}^{m}$ and  $\mathcal{A}:=L^{2,K}_{\mathbb{F}}(0, \infty;\mathbb{R}^{m})$ to be the space of all admissible controls. We consider a linear model for the forward dynamics of the state.
\begin{assumption}\label{mfc assumption}
    (i) The drift $b$ and the volatility $\sigma$ are linear in $\mu, x$ and $\alpha$. They read
$$
\begin{aligned}
b(t, x, \mu, \alpha) & =b_0(t)+b_1(t)x +b_2(t)\bar{\mu}+b_3(t) \alpha, \\
\sigma(t, x, \mu, \alpha) & =\sigma_0(t)+\sigma_1(t) x+\sigma_2(t) \bar{\mu}+\sigma_3(t) \alpha,
\end{aligned}
$$
for some bounded measurable deterministic functions $b_0, b_1, b_2$, $b_3$ with values in $\mathbb{R}^{n},\mathbb{R}^{n\times n},\mathbb{R}^{n\times n}$ and $\mathbb{R}^{n\times m }$ and $\sigma_0, \sigma_1, \sigma_2$, $\sigma_3$ with values in $\mathbb{R}^{n\times d}$, $\mathbb{R}^{(n\times d) \times n}$, $\mathbb{R}^{(n\times d) \times n}$ and $\mathbb{R}^{(n\times d) \times m}$(the parentheses around $n \times d$ indicating that $\sigma_i(t) u_i$ is seen as an element of $\mathbb{R}^{n \times d}$ whenever $u_i \in \mathbb{R}^n$, with $i=1,2$, or $u_i \in \mathbb{R}^{m}$, with $i=3$), $b_{0}(\cdot) \in L^{2,K}_{\mathbb{F}}(0,\infty;\mathbb{R}^{n})$, $\sigma_{0}(\cdot)\in  L^{2,K}_{\mathbb{F}}(0,\infty;\mathbb{R}^{n\times d}) $ and  we use the notation $\bar{\mu}=\int x d \mu(x)$ for the mean of a measure $\mu$.\\
(ii) $f$ is differentiable with respect to $(x,\mu, \alpha)$ and $|f(\cdot,0,\delta_{0_{n}},0)|\in L^{2,K}_{\mathbb{F}}(0,\infty;\mathbb{R})$. Moreover, the derivatives satisfy that $\partial_{x}f(\cdot,0,\delta_{0_{n}},0), \partial_{\mu}f(\cdot,0,0,\delta_{0_{n}})(0) \in L^{2,K}_{\mathbb{F}}(0,\infty;\mathbb{R}^{n})$ and $\partial_{\alpha}f(\cdot,0,\delta_{0_{n}},0)\in L^{2,K}_{\mathbb{F}}(0,\infty;\mathbb{R}^{m})$ .\\
(iii) There exists a positive constant $\tilde{L}$ such that for all $t\geq 0$, the functions $\partial_x f, \partial_\alpha f$ are $\tilde{L}$-Lipschitz continuous with respect to $(x,\alpha,\mu)$. Moreover, there exists a version of $\partial_\mu f(t, x^{\prime}, \mu,\alpha)(\cdot)$ such that $\partial_{\mu}f(t,x^{\prime},\mu,\alpha)(x)$ is $\tilde{L}$-Lipschitz in $(x^{\prime},\mu,\alpha,x)$ \footnote{ For the Lipschitz property of
$\left(t,x^{\prime}, \mu, \alpha, x\right) \mapsto \partial_\mu f\left(t, x^{\prime}, \mu, \alpha\right)(x)$, \cite[Lemma 5.41]{meanfield2018book1} provides a simple criterion.}, the Lipschitz property in the variable $\mu$ being understood in the sense of the $2$-Wassertein distance.\\
(iv) The function $f$ is convex with respect to $(x, \mu, \alpha)$ for $t \geq 0 $ in such a way that, for some $\lambda>0$,
$$
\begin{aligned}
& f\left(t, x^{\prime}, \mu^{\prime}, \alpha^{\prime}\right)-f(t, x, \mu, \alpha)-\partial_{(x, \alpha)} f(t, x, \mu, \alpha) \cdot\left(x^{\prime}-x, \alpha^{\prime}-\alpha\right)\\
&\quad -\tilde{\mathbb{E}}\left[\partial_\mu f(t, x, \mu, \alpha)(\tilde{X}) \cdot(\tilde{X}^{\prime}-\tilde{X})\right]\geq \lambda\left|\alpha^{\prime}-\alpha\right|^2,
\end{aligned}
$$
whenever $\tilde{X}, \tilde{X}^{\prime}$ with distributions $\mu$ and $\mu^{\prime}$, respectively. 
\end{assumption}
Now, we introduce some notations about bounded measurable functions valued in $\mathbb{R}^{n\times n}$. It is obvious that for any $t>0$, the matrix $(b_{1}(t)+b_{1}(t)^{\top})$ is symmetrical. Let $\lambda_{\max} (b_{1}(t)+b_{1}(t)^{\top})$ denote the largest eigenvalue of $(b_{1}(t)+b_{1}(t)^{\top})$. Due to the boundedness of $b_{1}(t)$, it is clear that
\begin{equation}
   \frac{1}{2}\sup_{t\in[0,\infty)}\lambda_{\max} (b_{1}(t)+b_{1}(t)^{\top}) <+\infty.
\end{equation}
Then, we have
\begin{equation}
\begin{aligned}
    \left \langle b_{1}(t)x,x \right \rangle   & = \frac{1}{2}\left\langle (b_{1}(t)+b_{1}(t)^{\top})x,x \right\rangle \leq \frac{1}{2}\lambda_{\max}(b_{1}(t)+b_{1}(t)^{\top})|x|^{2}\\ 
    & \leq \frac{1}{2}\sup_{t\in[0,\infty)}\lambda_{\max} (b_{1}(t)+b_{1}(t)^{\top})|x|^{2},
\end{aligned}
\end{equation}
for any $t>0$ and $x\in \mathbb{R}^{n}$.\\
Then in this linear setting, to ensure the control problem well-defined,  we can choose $\kappa$ in  Assumption \ref{assumption control x} (iii) as
\begin{equation}
    \kappa  = - \frac{1}{2}\sup_{t\in[0,\infty)}\lambda_{\max} (b_{1}(t)+b_{1}(t)^{\top}),
\end{equation}
and the corresponding relationship of $\kappa$ and ${K}$ is reduced to 
\begin{equation}\label{stateb1b2 condition}
   K<-\frac{1}{2}\sup_{t\in[0,\infty)}\lambda_{\max} (b_{1}(t)+b_{1}(t)^{\top}) -\|b_{2}(\cdot)\|_{\infty}-\frac{(\|\sigma_{1}(\cdot)\|_{\infty}+\|\sigma_{2}(\cdot)\|_{\infty})^{2}}{2}.
\end{equation}
Since the drift and the volatility are linear, the Hamiltonian defined as \eqref{Hamiltonian} takes the following particular form:
\begin{equation}\label{specialhamiltonian}
\begin{aligned}
H(t, x, \mu, y, z, \alpha)= & \left[b_0(t)+b_1(t) x+b_2(t) \bar{\mu}+b_3(t) \alpha\right]\cdot y \\
& +\left[\sigma_0(t)+\sigma_1(t)x+\sigma_2(t) \bar{\mu}+\sigma_3(t) \alpha\right] \cdot z+f(t, x, \mu, \alpha)+2Kx\cdot y, 
\end{aligned}    
\end{equation}
for $t\in [0,\infty),\ \alpha\in \mathbb{R}^{m},\ x,y\in \mathbb{R}^{n},\ z \in \mathbb{R}^{n\times d},\ \mu \in \mathcal{P}_{2}(\mathbb{R}^{n})$.

First, using similar argument in \cite[Lemma 3.3, Lemma 6.18]{meanfield2018book1}, we can easily obtain the following result about the minimization of the Hamiltonian  \eqref{specialhamiltonian}.
\begin{lemma}\label{alphaLipschitz}
Let Assumption \ref{mfc assumption} holds. Then for any $(t,x,\mu,y,z,\alpha) \in [0,\infty)\times \mathbb{R}^{n}\times \mathcal{P}_{2}(\mathbb{R}^{n})\times \mathbb{R}^{n} \times \mathbb{R}^{n\times d}\times \mathbb{R}^{m}$, there exists a unique minimizer $\hat{\alpha}(t,x,\mu,y,z)$ of Hamiltonian $H$. Moreover, the function $[0, \infty) \times \mathbb{R}^{n} \times \mathcal{P}_2\left(\mathbb{R}^{n}\right) \times \mathbb{R}^{n} \times \mathbb{R}^{n\times d }\ni(t, x, \mu, y, z) \mapsto$ $\hat{\alpha}(t,x, \mu, y, z) \in \mathbb{R}^{m}$ is measurable, locally bounded and Lipschitz continuous with respect to $(x, \mu, y, z)$, uniformly in $t \in[0, \infty)$, the Lipschitz constant depending only upon $\lambda$, $\|b_{3}(\cdot)\|_{\infty}$, $\|\sigma_{3}(\cdot)\|_{\infty}$ and the Lipschitz constant $\tilde{L}$ of $\partial_\alpha f$ in $(x, \mu)$. In fact, an explicit upper bound for $\hat{\alpha}$ reads:
\begin{equation}
    \begin{aligned}
        \forall (t,x,\mu,y,z) & \in [0,\infty)\times \mathbb{R}^{n} \times \mathcal{P}_{2}(\mathbb{R}^{n}) \times \mathbb{R}^{n} \times \mathbb{R}^{n\times d}, \\
        &|\hat{\alpha}(t,x,\mu,y,z)| \leq \lambda^{-1}(|\partial_{\alpha}f(t,x,\mu,\beta)|+|b_{3}(t)|y|+|\sigma_{3}(t)||z|)+|\beta_{t}|,
    \end{aligned}
\end{equation}
where $\beta_{t}$ is any admissible control in $\mathcal{A}$, and then $\hat{\alpha}(\cdot,0,\delta_{0_{n}},0,0)\in L^{2,K}_{\mathbb{F}}(0,\infty;\mathbb{R}^{m})$.  
\end{lemma}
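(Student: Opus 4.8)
The plan is to exploit the special structure of the Hamiltonian \eqref{specialhamiltonian}: under Assumption \ref{mfc assumption}(i) the dependence on $\alpha$ enters only through the two affine terms $b_3(t)\alpha\cdot y$ and $\sigma_3(t)\alpha\cdot z$ and through $f(t,x,\mu,\alpha)$, which by Assumption \ref{mfc assumption}(iv) is strongly convex in $\alpha$. First I would fix $(t,x,\mu,y,z)$ and note that $\alpha\mapsto H(t,x,\mu,y,z,\alpha)$ is then the sum of an affine function and a strongly convex one, hence is itself continuous, coercive and strongly convex on $\mathbb{R}^m$. This gives existence and uniqueness of the minimizer $\hat\alpha(t,x,\mu,y,z)$, characterized by the first-order condition $\Phi(\hat\alpha)=0$, where I abbreviate
$$
\Phi(\alpha):=\partial_\alpha H(t,x,\mu,y,z,\alpha)=\partial_\alpha f(t,x,\mu,\alpha)+b_3(t)^\top y+\sigma_3(t)^\top z.
$$

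For the explicit bound I would compare $\hat\alpha$ with an arbitrary admissible $\beta_t\in\mathcal{A}$. Since $\alpha\mapsto\Phi(\alpha)$ is strongly monotone (inherited from the strong convexity of $f$ in $\alpha$), testing $\Phi(\hat\alpha)-\Phi(\beta_t)$ against $\hat\alpha-\beta_t$ and using $\Phi(\hat\alpha)=0$ yields
$$
\lambda|\hat\alpha-\beta_t|^2\le\langle\Phi(\hat\alpha)-\Phi(\beta_t),\,\hat\alpha-\beta_t\rangle=-\langle\Phi(\beta_t),\,\hat\alpha-\beta_t\rangle\le|\Phi(\beta_t)|\,|\hat\alpha-\beta_t|,
$$
so that $\lambda|\hat\alpha-\beta_t|\le|\partial_\alpha f(t,x,\mu,\beta_t)|+|b_3(t)|\,|y|+|\sigma_3(t)|\,|z|$ after estimating the two linear contributions; the triangle inequality $|\hat\alpha|\le|\hat\alpha-\beta_t|+|\beta_t|$ then gives the stated bound. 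Local boundedness follows at once from this estimate together with the local boundedness of $\partial_\alpha f$, $b_3$ and $\sigma_3$, and measurability of $(t,x,\mu,y,z)\mapsto\hat\alpha$ follows from a measurable-selection argument for the unique argmin of a Carath\'eodory function, exactly as in \cite[Lemma 3.3, Lemma 6.18]{meanfield2018book1}.

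The Lipschitz continuity is the core of the statement. I would take two arguments $(x_i,\mu_i,y_i,z_i)$, $i=1,2$, write the corresponding first-order conditions $\partial_\alpha f(t,x_i,\mu_i,\hat\alpha_i)+b_3(t)^\top y_i+\sigma_3(t)^\top z_i=0$, subtract them and pair with $\hat\alpha_1-\hat\alpha_2$. After inserting the cross term $\partial_\alpha f(t,x_2,\mu_2,\hat\alpha_1)$, the strong monotonicity of $\partial_\alpha f$ in the $\alpha$-variable bounds $|\hat\alpha_1-\hat\alpha_2|^2$ from below, while the $\tilde L$-Lipschitz continuity of $\partial_\alpha f$ in $(x,\mu)$ from Assumption \ref{mfc assumption}(iii) and the bounds $\|b_3\|_\infty,\|\sigma_3\|_\infty$ control the remaining terms. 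Dividing out one factor of $|\hat\alpha_1-\hat\alpha_2|$ produces
$$
|\hat\alpha_1-\hat\alpha_2|\le\frac1\lambda\Big(\tilde L\big(|x_1-x_2|+\mathcal{W}_2(\mu_1,\mu_2)\big)+\|b_3\|_\infty|y_1-y_2|+\|\sigma_3\|_\infty|z_1-z_2|\Big),
$$
uniformly in $t$, which is a Lipschitz constant depending only on $\lambda,\|b_3\|_\infty,\|\sigma_3\|_\infty,\tilde L$. Finally, evaluating the explicit bound at $(x,\mu,y,z)=(0,\delta_{0_n},0,0)$ with the admissible choice $\beta_t\equiv0$ gives $|\hat\alpha(t,0,\delta_{0_n},0,0)|\le\lambda^{-1}|\partial_\alpha f(t,0,\delta_{0_n},0)|$, and since $\partial_\alpha f(\cdot,0,\delta_{0_n},0)\in L^{2,K}_{\mathbb{F}}(0,\infty;\mathbb{R}^m)$ by Assumption \ref{mfc assumption}(ii), the membership $\hat\alpha(\cdot,0,\delta_{0_n},0,0)\in L^{2,K}_{\mathbb{F}}(0,\infty;\mathbb{R}^m)$ follows.

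I expect the main obstacle to be organizing the Lipschitz estimate so that the strong monotonicity is used purely in $\alpha$ while the Lipschitz dependence is transferred onto $(x,\mu)$ through the cross term; once this bookkeeping is in place the estimate is routine. The only other point needing care is the precise strong-monotonicity constant implied by the convexity convention in Assumption \ref{mfc assumption}(iv), which is in fact $2\lambda$ rather than $\lambda$; since this only sharpens the constants, every displayed bound written with $\lambda^{-1}$ holds a fortiori.
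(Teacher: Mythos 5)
Your proposal is correct and follows essentially the same route as the paper, which proves this lemma simply by invoking the standard argument of \cite[Lemma 3.3, Lemma 6.18]{meanfield2018book1}: strong convexity of $\alpha\mapsto H$ (affine part plus $\lambda$-strongly convex $f$) gives the unique minimizer, and the first-order condition combined with strong monotonicity of $\partial_\alpha f$ yields the explicit bound, the Lipschitz estimate in $(x,\mu,y,z)$ with constant depending only on $\lambda,\tilde L,\|b_3\|_\infty,\|\sigma_3\|_\infty$, and the $L^{2,K}_{\mathbb{F}}$ membership via Assumption \ref{mfc assumption}(ii). Your closing remark that Assumption \ref{mfc assumption}(iv) actually gives monotonicity constant $2\lambda$, so that all bounds stated with $\lambda^{-1}$ hold a fortiori, is also accurate.
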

Then the infinite horizon McKean-Vlasov FBSDE corresponding to \eqref{mfc fbsde} reads
\begin{equation}\label{linearmfcfbsde}
\left\{
\begin{aligned}
d X_t &= {\left[b_0(t)+b_1(t) X_t+b_2(t) \mathbb{E}\left[X_t\right]+b_3(t) \hat{\alpha}\left(t, X_t, \mathcal{L}(X_{t}), Y_t, Z_t\right)\right] d t } \\
& \quad +\left[\sigma_0(t)+\sigma_1(t) X_t+\sigma_2(t) \mathbb{E}\left[X_t\right]+\sigma_3(t) \hat{\alpha}\left(t, X_t, \mathcal{L}(X_{t}), Y_t, Z_t\right)\right] d W_t,\\
d Y_t & =  -\left[\partial_x f\left(t, X_t, \mathcal{L}(X_{t}), \hat{\alpha}\left(t, X_t, \mathcal{L}(X_{t}), Y_t, Z_t\right)\right)+b_1(t)Y_t+2KY_{t}+\sigma_1(t) Z_t\right] d t+Z_t d W_t \\
&\quad -\left\{\tilde{\mathbb{E}}\left[\partial_\mu f(t, \tilde{X}_t, \mathcal{L}(X_{t}), \hat{\alpha}(t, \tilde{X}_t, \mathcal{L}(X_{t}), \tilde{Y}_t, \tilde{Z}_t))\left(X_t\right)\right]+b_2(t) \mathbb{E}\left[Y_t\right]+\sigma_2(t) \mathbb{E}\left[Z_t\right]\right\} d t. 
\end{aligned}\right.
\end{equation}

Next, based on Lemma \ref{alphaLipschitz}, we can follow the arguments in the proof of \cite[Lemma 3.2]{bayraktar2023solvability} to prove that the following function is Lipschitz: 
\begin{equation}
\begin{aligned}
    \Psi(t, x, m) &:= \tilde{\mathbb{E}}\left[\partial_\mu f(t, \tilde{X}_t, \mathcal{L}(X_{t}), \hat{\alpha}(t, \tilde{X}_t, \mathcal{L}(X_{t}), \tilde{Y}_t, \tilde{Z}_t))\left(X_t\right)\right]\\
   & = \int_{x^{\prime}, y^{\prime},z^{\prime}} \partial_\mu f\left(t, x^{\prime}, \mu, \hat{\alpha}\left(t,x^{\prime},\mu, y^{\prime}, z^{\prime}\right)\right)(x) d m\left(x^{\prime}, y^{\prime},z^{\prime}\right),
\end{aligned}
\end{equation}
where $m\in \mathcal{P}_{2}(\mathbb{R}^{n+n+n\times d})$ and $\mu$ is the first marginal of $m$. To avoid repetition, the detailed proof of the following lemma is omitted.
 \begin{lemma}\label{measureLipschitz}
 Under Assumption \ref{mfc assumption}, for any $t>0$, $x, \bar{x} \in \mathbb{R}^{n}, m, \bar{m} \in \mathcal{P}_2\left(\mathbb{R}^{n+n+n \times d}\right)$, it holds that
\begin{equation}
    |\Psi(t, x, m)-\Psi(t, \bar{x}, \bar{m})| \leq C_{\Psi}\mathcal{W}_2(m, \bar{m})+\tilde{L}|x-\bar{x}|,
\end{equation}
where $C_{\Psi}$ depending only upon the Lipschitz constant of $\hat{\alpha}$ in $(x,\mu,y,z)$, the Lipschitz constant $\tilde{L}$ of $\partial_{\mu}f$ in $(x^{\prime},\mu,\alpha,x)$.
\end{lemma}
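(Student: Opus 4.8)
The plan is to separate the two arguments of $\Psi$ and treat them independently, exploiting the integral representation
$$\Psi(t,x,m)=\int_{x',y',z'} \partial_\mu f\big(t, x', \mu, \hat{\alpha}(t,x',\mu,y',z')\big)(x)\, dm(x',y',z'),$$
where $\mu$ denotes the first marginal of $m$. The dependence on the evaluation point $x$ is the routine part: fixing $m$, the difference $\Psi(t,x,m)-\Psi(t,\bar{x},m)$ has an integrand equal to $\partial_\mu f(\cdots)(x)-\partial_\mu f(\cdots)(\bar{x})$ with identical remaining arguments, so the $\tilde{L}$-Lipschitz continuity of $\partial_\mu f(t,x',\mu,\alpha)(\cdot)$ in its last slot, granted by Assumption \ref{mfc assumption}(iii), bounds the integrand pointwise by $\tilde{L}|x-\bar{x}|$, and integrating against the probability measure $m$ preserves the bound. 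This produces the term $\tilde{L}|x-\bar{x}|$.

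The substantive part is the dependence on the measure, i.e. bounding $\Psi(t,\bar{x},m)-\Psi(t,\bar{x},\bar{m})$ by $C_\Psi\mathcal{W}_2(m,\bar{m})$. Here one must track that $m$ enters $\Psi$ in two ways: as the integrating measure and through its first marginal $\mu$, which appears both as the measure slot of $\partial_\mu f$ and inside $\hat{\alpha}$. I would introduce an optimal coupling of $m$ and $\bar{m}$, namely random variables $(x',y',z')\sim m$ and $(\bar{x}',\bar{y}',\bar{z}')\sim\bar{m}$ on a common space with $\mathbb{E}\big[|(x',y',z')-(\bar{x}',\bar{y}',\bar{z}')|^2\big]=\mathcal{W}_2(m,\bar{m})^2$, and rewrite the difference as the single expectation of
$$\partial_\mu f\big(t,x',\mu,\hat{\alpha}(t,x',\mu,y',z')\big)(\bar{x})-\partial_\mu f\big(t,\bar{x}',\bar{\mu},\hat{\alpha}(t,\bar{x}',\bar{\mu},\bar{y}',\bar{z}')\big)(\bar{x}).$$

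To estimate this integrand I would invoke the joint $\tilde{L}$-Lipschitz continuity of $\partial_\mu f(t,\cdot,\cdot,\cdot)(\bar{x})$ in $(x',\mu,\alpha)$, producing the contributions $\tilde{L}|x'-\bar{x}'|$, $\tilde{L}\,\mathcal{W}_2(\mu,\bar{\mu})$, and $\tilde{L}\,|\hat{\alpha}(t,x',\mu,y',z')-\hat{\alpha}(t,\bar{x}',\bar{\mu},\bar{y}',\bar{z}')|$. The last term I would control by the Lipschitz property of $\hat{\alpha}$ from Lemma \ref{alphaLipschitz}, bounding it by $L_{\hat{\alpha}}\big(|x'-\bar{x}'|+\mathcal{W}_2(\mu,\bar{\mu})+|y'-\bar{y}'|+|z'-\bar{z}'|\big)$, where $L_{\hat{\alpha}}$ is the Lipschitz constant of $\hat{\alpha}$. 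Taking expectations, passing from the $L^1$ to the $L^2$ norm of the coupling differences by Jensen's inequality, and using the elementary bound $\mathcal{W}_2(\mu,\bar{\mu})\le\mathcal{W}_2(m,\bar{m})$ for first marginals, every term is dominated by a constant multiple of $\mathcal{W}_2(m,\bar{m})$; collecting the constants gives a $C_\Psi$ depending only on $\tilde{L}$ and $L_{\hat{\alpha}}$. Combining the two steps by the triangle inequality then completes the proof.

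The main obstacle I anticipate is the bookkeeping around the dual role of $\mu$: perturbing $m$ also perturbs the measure parameter feeding simultaneously $\partial_\mu f$ and $\hat{\alpha}$, and one must verify that this marginal perturbation is itself $\mathcal{W}_2(m,\bar{m})$-controlled rather than treating $\mu$ as fixed. The nesting of the Lipschitz estimate for $\hat{\alpha}$, which is itself $\mu$-dependent, inside the Lipschitz estimate for $\partial_\mu f$ is where the constants multiply, and the selection of the specific jointly Lipschitz version of $\partial_\mu f$ furnished by Assumption \ref{mfc assumption}(iii) is precisely what legitimizes the pointwise estimate of the integrand before integrating against the coupling.
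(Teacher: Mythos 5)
Your proof is correct and is essentially the argument the paper intends: the paper omits the details, stating only that one follows \cite[Lemma 3.2]{bayraktar2023solvability} based on Lemma \ref{alphaLipschitz}, and that argument is precisely your optimal-coupling estimate combining the joint $\tilde{L}$-Lipschitz continuity of $\partial_\mu f$ with the Lipschitz continuity of $\hat{\alpha}$ and the bound $\mathcal{W}_2(\mu,\bar{\mu})\le \mathcal{W}_2(m,\bar{m})$ for first marginals. Your handling of the dual role of $\mu$ (as the measure slot of $\partial_\mu f$ and as an argument of $\hat{\alpha}$) and the resulting constant $C_\Psi$ depending only on $\tilde{L}$ and the Lipschitz constant of $\hat{\alpha}$ match the statement exactly.
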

Now we give the main result of this section. It is worth noting that the condition \eqref{b1b2condition} for parameter $K$ is exactly the requirement \eqref{stateb1b2 condition} to ensure the infinite horizon mean field control problems well-defined. 
\begin{Theorem}\label{theorem mfc solvability}
   Suppose Assumption \ref{mfc assumption} holds. Let
\begin{equation}\label{b1b2condition}
   K<-\frac{1}{2}\sup_{t\in[0,\infty)}\lambda_{\max} (b_{1}(t)+b_{1}(t)^{\top}) -\|b_{2}(\cdot)\|_{\infty}-\frac{(\|\sigma_{1}(\cdot)\|_{\infty}+\|\sigma_{2}(\cdot)\|_{\infty})^{2}}{2}.
\end{equation}
 Then infinite horizon FBSDE \eqref{linearmfcfbsde} admits a unique solution $(X,Y,Z)\in L^{2,K}_{\mathbb{F}}(0,\infty;\mathbb{R}^{n+n+n\times d})$ and $\hat{\alpha}(t,X_{t},\mathcal{L}(X_{t}),Y_{t},Z_{t}),\ t\in[0,\infty)$ is the optimal control of the infinite horizon mean field control problem \eqref{minimization}-\eqref{general state dynamics}.
\end{Theorem}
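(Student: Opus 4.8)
The plan is to recognize \eqref{linearmfcfbsde} as a special case of the abstract FBSDE \eqref{infinite FBSDE} (with forward and backward dimensions both equal to $n$), to verify Assumptions (H1) and (H2) for its coefficients $(B,F,\sigma)$, apply Theorem \ref{global theorem} to get unique solvability in $L^{2,K}_{\mathbb{F}}$, and finally invoke Proposition \ref{maximum principle} to identify $\hat{\alpha}(t,X_t,\mathcal{L}(X_t),Y_t,Z_t)$ as optimal. Throughout I take $G=\mathbb{I}_n$, $\beta_1=0$, and
\[
\phi_2\big(t,\Theta_1,\Theta_2,\mathcal{L}(\Theta_1),\mathcal{L}(\Theta_2)\big)=\big\|\hat{\alpha}(t,X_1,\mathcal{L}(X_1),Y_1,Z_1)-\hat{\alpha}(t,X_2,\mathcal{L}(X_2),Y_2,Z_2)\big\|_{L^2}^2,
\]
exactly the choice anticipated in Remark \ref{remark flexicity monotonicity}(ii)--(iii).

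First I would verify (H1). The coefficients of \eqref{linearmfcfbsde} are affine in $(x,y,z,\bar{\mu})$ composed with $\hat{\alpha}$, $\partial_x f$ and the map $\Psi$; their joint Lipschitz continuity in $(x,y,z,m)$ follows from the boundedness of the $b_i,\sigma_i$, the Lipschitz continuity of $\hat{\alpha}$ (Lemma \ref{alphaLipschitz}) and of $\Psi$ (Lemma \ref{measureLipschitz}), and the $\tilde{L}$-Lipschitz continuity of $\partial_x f$. Membership of $(B,F,\sigma)(\cdot,0,0,0,\delta_{0})$ in $L^{2,K}_{\mathbb{F}}$ reduces, through these same bounds, to the integrability hypotheses on $b_0,\sigma_0$ and on the derivatives of $f$ in Assumption \ref{mfc assumption}(ii), together with $\hat{\alpha}(\cdot,0,\delta_{0_n},0,0)\in L^{2,K}_{\mathbb{F}}$ from Lemma \ref{alphaLipschitz}.

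The core of the argument is (H2)(i). Writing $\Delta X=X_1-X_2$, etc., and expanding the left-hand side of \eqref{monotonicity} with $G=\mathbb{I}_n$, the affine terms cancel pairwise by the very structure of $\partial_x\mathcal{H}$: the $b_1$-contribution of $\langle\Delta B,\Delta Y\rangle$ against the corresponding term of $\langle\Delta F,\Delta X\rangle$ (and likewise for $b_2,\sigma_1,\sigma_2$, using Fubini for the mean-field terms), while the $-2K\Delta Y$ contribution to $\langle\Delta F,\Delta X\rangle$ is annihilated by the extra term $(\kappa_x+\kappa_y)\mathbb{E}[\langle\Delta X,\Delta Y\rangle]$ precisely because $\kappa_x+\kappa_y=2K$. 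What survives is $\mathbb{E}[\langle b_3\Delta\hat{\alpha},\Delta Y\rangle+\langle\sigma_3\Delta\hat{\alpha},\Delta Z\rangle]-\mathbb{E}[\langle\partial_x f_1-\partial_x f_2,\Delta X\rangle+\langle\Psi_1-\Psi_2,\Delta X\rangle]$. Since $A=\mathbb{R}^m$ is unconstrained and $\hat{\alpha}_i$ minimizes $H$, the first-order condition $\partial_\alpha f_i+b_3(t)^\top Y_i+\sigma_3(t)^\top Z_i=0$ lets me replace $b_3^\top\Delta Y+\sigma_3^\top\Delta Z$ by $-(\partial_\alpha f_1-\partial_\alpha f_2)$; the remainder is then minus the symmetrized gradient increment of $f$ in $(x,\mu,\alpha)$, which by the $\lambda$-convexity Assumption \ref{mfc assumption}(iv) and the Fubini identity for $\partial_\mu f$ (as in \eqref{fubini}) is bounded above by $-2\lambda\,\phi_2$. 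Hence \eqref{monotonicity} holds with $\beta_2=2\lambda>0$.

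It then remains to verify (H2)(ii), Case 1, and to reconcile the constants with \eqref{b1b2condition}. Direct estimates give the first inequality of \eqref{xy monotonicity} with $l_\sigma$ arbitrarily close to $(\|\sigma_1\|_\infty+\|\sigma_2\|_\infty)^2$, the second with $\kappa_x$ arbitrarily close to $\kappa-\|b_2\|_\infty$ where $\kappa=-\tfrac12\sup_t\lambda_{\max}(b_1+b_1^\top)$, and (using the Lipschitz bounds for $\partial_x f$ and $\Psi$) the $F$-estimate with $\kappa_y+\tfrac{l_z}{2}$ arbitrarily close to $2K-\kappa+\|b_2\|_\infty$; crucially the $b_3\Delta\hat{\alpha}$ and $\partial_\alpha$-type increments are absorbed into $l_\phi\phi_2$ rather than into $\kappa_x,\kappa_y$, which is the whole purpose of the $\phi_2$-device noted in Remark \ref{remark flexicity monotonicity}(iii). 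Because these are one-sided inequalities and $l_z$ together with all auxiliary Young parameters may be taken small, the strict gap in \eqref{b1b2condition}, $K<\kappa-\|b_2\|_\infty-\tfrac12(\|\sigma_1\|_\infty+\|\sigma_2\|_\infty)^2$, is exactly what permits a choice of $\kappa_x,\kappa_y$ with $\kappa_x+\kappa_y=2K$ and $\kappa_x-\kappa_y>\max\{l_\sigma,l_z\}$, i.e.\ the equivalent condition \eqref{equivalentkxkyk}. With (H1) and (H2) established, Theorem \ref{global theorem} gives a unique $(X,Y,Z)\in L^{2,K}_{\mathbb{F}}(0,\infty;\mathbb{R}^{n+n+n\times d})$; convexity of $\mathcal{H}$ (from convexity of $f$ and the affine $b,\sigma$) then lets Proposition \ref{maximum principle} identify $\hat{\alpha}(t,X_t,\mathcal{L}(X_t),Y_t,Z_t)$ as optimal. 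I expect the delicate constant-matching of this last step---honoring the exact constraint $K=(\kappa_x+\kappa_y)/2$ while respecting the strict bound \eqref{b1b2condition}---to be the main obstacle, with the convexity-driven verification of \eqref{monotonicity} forming the technical heart.
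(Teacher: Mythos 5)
Your proposal is correct and follows essentially the same route as the paper's own proof: the same reduction to Theorem~\ref{global theorem} plus Proposition~\ref{maximum principle}, the same choices $G=\mathbb{I}_n$, $\beta_1=0$, $\phi_2=\|\hat{\alpha}_1-\hat{\alpha}_2\|_{L^2}^2$ with $\beta_2=2\lambda$ (the paper derives \eqref{monotonicity} via Bregman-type increments of the reduced Hamiltonian $\bar{H}=H-2Kx\cdot y$, which is precisely your cancellation-plus-first-order-condition computation in disguise), and the same straddling choice of $\kappa_x,\kappa_y$ so that $\kappa_x+\kappa_y=2K$ and $\kappa_x-\kappa_y>\max\{l_\sigma,l_z\}$ under \eqref{b1b2condition}. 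The one slip, which is harmless, is your claim that $l_z$ may be taken small: bounding $\mathbb{E}\left[\left\langle \sigma_1(t)(Z_1-Z_2)+\sigma_2(t)\mathbb{E}[Z_1-Z_2],\,Y_1-Y_2\right\rangle\right]$ in the $F$-inequality of \eqref{xy monotonicity} with $Z$-coefficient $\gamma/2$, $\gamma<1$, forces $l_z\geq(\|\sigma_1(\cdot)\|_\infty+\|\sigma_2(\cdot)\|_\infty)^2/\gamma$ (the paper takes $l_z=(\|\sigma_1(\cdot)\|_\infty+\|\sigma_2(\cdot)\|_\infty)^2+\varepsilon_z$), but since $l_\sigma$ has the same magnitude this leaves $\max\{l_\sigma,l_z\}$, and hence the final constant-matching under \eqref{b1b2condition}, unchanged.
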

\begin{proof}
Under Assumption \ref{mfc assumption}, by Lemma \ref{alphaLipschitz}, $\hat{\alpha}_{t}$ is Lipschitz in $(x,\mu,y,z)$ and $\hat{\alpha}(\cdot,0,0,0,\delta_{0_{n}}) \in L^{2,K}_{\mathbb{F}}(0,\infty;\mathbb{R}^{m})$. 
  The linearity of $(b, \sigma)$ and the  convexity of $f$ in Assumption \ref{mfc assumption} imply that the Hamiltonian $H$ is convex in $(x,\mu,\alpha)$. Indeed, for all $(t, y, z) \in[0, T] \in \mathbb{R}^{n}\times \mathbb{R}^{n \times d}$, $(x, \mu, \alpha),\ \left(x^{\prime}, \mu^{\prime}, \alpha^{\prime}\right) \in \mathbb{R}^{n} \times \mathcal{P}_2\left(\mathbb{R}^{n}\right) \times \mathbb{R}^{m}$, we have
\begin{equation}\label{hamiltonian convex}
\begin{aligned}
&\quad  H\left(t, x^{\prime}, \alpha^{\prime}, \mu^{\prime}, y, z\right)-H(t, x, a, \mu
, y, z)-\left\langle\partial_{(x, \alpha)} H(t, x, \alpha, \mu, y, z),\left(x^{\prime}-x, \alpha^{\prime}-\alpha\right)\right\rangle \\
& \quad-\tilde{\mathbb{E}}\left[\langle\partial_\mu H(t, x, \alpha, \mu, y, z)(\tilde{X}), \tilde{X}^{\prime}-\tilde{X}\rangle\right] \\
& =  b_{2}(\bar{\mu}^{\prime}-\bar{\mu})\cdot y+\sigma_{2}(\bar{\mu}^{\prime}-\bar{\mu})\cdot z+f(t,x^{\prime},\alpha^{\prime},\mu^{\prime})-f(t,x,\alpha,\mu)\\
&\quad -\tilde{\mathbb{E}}\left[\langle b_{2}y+\sigma_{2}z+\partial_{\mu}f(t,x,\alpha,\mu)(\tilde{X}),\tilde{X}^{\prime}-\tilde{X}\rangle\right]\\
& =f(t,x^{\prime},\alpha^{\prime},\mu^{\prime})- f(t,x,\alpha,\mu)-\tilde{\mathbb{E}}\left[\langle \partial_{\mu}f(t,x,\alpha,\mu)(\tilde{X}),\tilde{X}^{\prime}-\tilde{X}\rangle\right]\\
& \geq \lambda\left|\alpha^{\prime}-\alpha\right|^2,
\end{aligned}    
\end{equation}
whenever $\tilde{X},\tilde{X}^{\prime} \in L^2\left(\tilde{\Omega}, \tilde{\mathcal{F}}, \tilde{\mathbb{P}} ; \mathbb{R}\right)$ with distributions $\mu$ and $\mu^{\prime}$ and we use the notation $\bar{\mu
},\bar{\mu
}^{\prime}$ for the mean of a measure $\mu$, $\mu^{\prime}$, respectively.

Then, from Proposition \ref{maximum principle}, it remains to prove the well-posedness of FBSDE \eqref{linearmfcfbsde} and we will apply Theorem \ref{global theorem} to obtain the solvability results. By Assumption \ref{mfc assumption}, Lemma \ref{alphaLipschitz} and Lemma \ref{measureLipschitz}, it is easy to show Assumption (H1) holds. Now it remains to verify Assumption (H2) and condition \eqref{kxkyk} to obtain the desired conclusion. \\
For simplicity of notation, we denote $\Theta_{1} := (X_{1},Y_{1},Z_{1}),\ \Theta_{2}:=(X_{2},Y_{2},Z_{2})\in L^{2}(\Omega;\mathbb{R}^{n+n+n\times d})$ and 
\begin{equation}
    \alpha_{i} = \hat{\alpha}(t,X_{i},\mathcal{L}(X_{i}),Y_{i},Z_{i}),\quad  \tilde{\alpha}_{i} = \hat{\alpha}(t,\tilde{X}_{i},\mathcal{L}(X_{i}),\tilde{Y}_{i}, \tilde{Z}_{i}),\quad i = 1,2.
\end{equation}

First, we show the monotonicity condition \eqref{monotonicity} holds. It is clear that the  Hamiltonian system \eqref{linearmfcfbsde} is a special case of FBSDE \eqref{infinite FBSDE} with
\begin{equation}
 \begin{aligned}
     B(t,\Theta,\mathcal{L}(\Theta)) &= b_0(t)+b_1(t) X+b_2(t) \mathbb{E}\left[X\right]+b_3(t) \hat{\alpha}\left(t, X, \mathcal{L}(X), Y, Z\right) \\
     \sigma(t,\Theta,\mathcal{L}(\Theta)) &= \sigma_0(t)+\sigma_1(t) X+\sigma_2(t) \mathbb{E}\left[X\right]+\sigma_3(t) \hat{\alpha}\left(t, X, \mathcal{L}(X), Y, Z\right)\\
F(t,\Theta,\mathcal{L}(\Theta)) &= -\Big\{\partial_x f\left(t, X, \mathcal{L}(X), \hat{\alpha}\left(t, X, \mathcal{L}(X), Y, Z\right)\right)+b_1(t)Y+2KY+\sigma_1(t) Z\\
&\quad +\tilde{\mathbb{E}}\left[\partial_\mu f(t, \tilde{X}, \mathcal{L}(X), \hat{\alpha}(t, \tilde{X}, \mathcal{L}(X), \tilde{Y}, \tilde{Z}))\left(X\right)\right]+b_2(t) \mathbb{E}\left[Y\right]+\sigma_2(t) \mathbb{E}\left[Z\right]\Big\}.
 \end{aligned}   
\end{equation}
Let $\kappa_{x},\kappa_{y}$ be determined later, $K = \frac{\kappa_{x}+\kappa_{y}}{2}$, $G = \mathbb{I}_{n}$ and choose the measurable function $\phi_{2}$ appearing in Assumption (H2) to be
$$
\phi_{2}(t,\Theta_{1},\Theta_{2},\mathcal{L}(\Theta_{1}),\mathcal{L}(\Theta_{2})) := \mathbb{E}\left[\left|\hat{\alpha}(t,X_{1},\mathcal{L}(X_{1}),Y_{1},Z_{1})-\hat{\alpha}(t,X_{2},\mathcal{L}(X_{2}),Y_{2},Z_{2})\right|^{2}\right].
$$
Denote
\begin{equation}\label{tildeH}
\begin{aligned}
    \bar{H}(t, x, \mu, y, z, \alpha) &= \left[b_0(t)+b_1(t) x+b_2(t) \bar{\mu}+b_3(t) \alpha\right]\cdot y +\left[\sigma_0(t)+\sigma_1(t)x+\sigma_2(t) \bar{\mu}+\sigma_3(t) \alpha\right]\cdot z\\
    &\quad +f(t, x, \mu,\alpha).
\end{aligned}
 \end{equation}
 It is obvious that $ H(t,x,\mu,y,z,\alpha) = \bar{H}(t,x,\mu,y,z,\alpha)+2Kx\cdot y$ and it has the same convex property as \eqref{hamiltonian convex}.
 From the definition of $B$ and the linearity of $\bar{H}$ in $(y, z)$, we can deduce that
\begin{equation}
\begin{aligned}
& \quad \mathbb{E}\Big[ \left\langle B\left(t, \Theta_1, \mathcal{L}(\Theta_{1})\right)-B\left(t, \Theta_2, \mathcal{L}(\Theta_{2})\right), Y_1-Y_2\right\rangle + \left \langle \sigma\left(t, \Theta_1, \mathcal{L}(\Theta_{1})\right)-\sigma\left(t, \Theta_2, \mathcal{L}(\Theta_{2})\right), Z_1-Z_2 \right\rangle \Big]\\
& =  \mathbb{E}\Big[\bar{H}\left(t, X_1, \alpha_1, \mathcal{L}(X_{1}), Y_1, Z_1\right)-\bar{H}\left(t, X_1, \alpha_1, \mathcal{L}(X_{1}), Y_2, Z_2\right)\Big] \\
& \quad-\mathbb{E}\Big[\bar{H}\left(t, X_2, \alpha_2, \mathcal{L}(X_{2}), Y_1, Z_1\right)-\bar{H}\left(t, X_2, \alpha_2, \mathcal{L}(X_{2}), Y_2, Z_2\right)\Big].
\end{aligned}    
\end{equation}
Moreover, by the definition of $F$, we can obtain that
\begin{equation}
 \begin{aligned}
& \quad \mathbb{E}  {\left[\left\langle F\left(t, \Theta_1, \mathcal{L}(\Theta_{1})\right)-F\left(t, \Theta_2, \mathcal{L}(\Theta_{2})\right), X_1-X_2\right\rangle\right] } \\
& =  -\mathbb{E}\left[\left\langle\partial_x \bar{H}\left(t, X_1, \alpha_1, \mathcal{L}(X_{1}), Y_1, Z_1\right), X_1-X_2\right\rangle\right] \\
& \quad-\mathbb{E}\Big[\tilde{\mathbb{E}}[\langle\partial_\mu \bar{H}\left(t, X_1, \alpha_1, \mathcal{L}(X_{1}), Y_1, Z_1\right)(\tilde{X}_1), \tilde{X}_1-\tilde{X}_2\rangle]\Big] \\
& \quad+\mathbb{E}\left[\langle \partial_x \bar{H}\left(t, X_2, \alpha_2, \mathcal{L}(X_{2}), Y_2, Z_2\right), X_1-X_2\rangle\right] \\
& \quad+\mathbb{E}\Big[\tilde{\mathbb{E}}[\langle\partial_\mu \bar{H}\left(t, X_2, \alpha_2, \mathcal{L}(X_{2}), Y_2, Z_2\right)(\tilde{X}_2), \tilde{X}_1-\tilde{X}_2\rangle]\Big]\\
& \quad-2K\mathbb{E}\left[\left \langle X_{1}-X_{2}, Y_{1}-Y_{2} \right \rangle \right],
\end{aligned}   
\end{equation}
where we have also applied Fubini's theorem and the fact that $\mathcal{L}\left(X_i, Y_i, Z_i, \alpha_i\right)=\mathcal{L}(\tilde{X}_i, \tilde{Y}_i, \tilde{Z}_i, \tilde{\alpha}_i)$ for $i=1,2$.
Therefore, we can conclude that
\begin{equation*}
 \begin{aligned}
&  \quad \mathbb{E} \left[\left\langle  B\left(t, \Theta_1, \mathcal{L}(\Theta_{1})\right)-B\left(t, \Theta_2, \mathcal{L}(\Theta_{2})\right), Y_1-Y_2\right\rangle\right]\\
&\quad \quad+\mathbb{E}\left[\left\langle\sigma\left(t, \Theta_1, \mathcal{L}(\Theta_{1})\right)-\sigma\left(t, \Theta_2, \mathcal{L}(\Theta_{2})\right), Z_1-Z_2\right\rangle\right] \\
&\quad \quad +\mathbb{E}\left[\left\langle F\left(t, \Theta_1, \mathcal{L}(\Theta_{1})\right)-F\left(t, \Theta_2, \mathcal{L}(\Theta_{2})\right), X_1-X_2\right\rangle\right]\\
&\quad \quad+2K\mathbb{E}[\left \langle X_{1}-X_{2}, Y_{1}-Y_{2}\right \rangle ] \\
& =  \mathbb{E}\Bigg[\bar{H}\left(t, X_1, \alpha_1, \mathcal{L}(X_{1}), Y_1, Z_1\right)-\bar{H}\left(t, X_2, \alpha_2, \mathcal{L}(X_{2}), Y_1, Z_1\right) \\
& \quad -\left\langle\partial_x \bar{H}\left(t, X_1, \alpha_1, \mathcal{L}(X_{1}), Y_1, Z_1\right), X_1-X_2\right\rangle \\
&\quad -\tilde{\mathbb{E}}\left[\langle\partial_\mu \bar{H}\left(t, X_1, \alpha_1, \mathcal{L}(X_{1}), Y_1, Z_1\right)(\tilde{X}_1), \tilde{X}_1-\tilde{X}_2\rangle\right]\Bigg] \\
& \quad-\mathbb{E}\Bigg[\bar{H}\left(t, X_1, \alpha_1, \mathcal{L}(X_{1}), Y_2, Z_2\right)-\bar{H}\left(t, X_2, \alpha_2, \mathcal{L}(X_{2}), Y_2, Z_2\right) \\
& \quad-\left\langle\partial_x \bar{H}\left(t, X_2, \alpha_2, \mathcal{L}(X_{2}), Y_2, Z_2\right), X_1-X_2\right\rangle \\
& \quad -\tilde{\mathbb{E}}\left[\langle\partial_\mu \bar{H}\left(t, X_2, \alpha_2, \mathcal{L}(X_{2}), Y_2, Z_2\right)(\tilde{X}_2), \tilde{X}_1-\tilde{X}_2\rangle\right]\Bigg] \\
& \leq -2\lambda \mathbb{E}\left[\left|\alpha_1-\alpha_2\right|^2\right]\\
&\leq -2\lambda \phi_{2}(t,\Theta_{1},\Theta_{2},\mathcal{L}(\Theta_{1}),\mathcal{L}(\Theta_{2})),
\end{aligned}   
\end{equation*}
 which corresponds to $\beta_{2}>0$ in monotonicity condition \eqref{monotonicity}. Then, we check the condition of case 1 in Assumption (H2)(ii). \\
For the coefficient function $\sigma$, for any $\varepsilon_{\sigma}>0$, we have
\begin{equation}\label{sigma}
\begin{aligned}
         &\quad \mathbb{E}\left[ |\sigma\left(t, \Theta_1, \mathcal{L}(\Theta_{1})\right)-\sigma(t,\Theta_{2},\mathcal{L}(\Theta_{2}))|^{2}\right]\\
         & = \mathbb{E}\left[\Big|\sigma_{1}(t)\left(X_{1}-X_{2}\right)+\sigma_{2}(t)\left(\mathbb{E}[X_{1}]-\mathbb{E}[X_{2}]\right)+\sigma_{3}(t)(\alpha_{1}-\alpha_{2})\Big|^{2}\right]\\
         & \leq \left(\left(\|\sigma_{1}(\cdot)\|_{\infty}+\|\sigma_{2}(\cdot)\|_{\infty}\right)^{2}+\varepsilon_{\sigma}\right)\mathbb{E}[|X_{1}-X_{2}|^{2}]\\
         &\quad +\left(\|\sigma_{3}(\cdot)\|_{\infty}^{2}+\frac{1}{4\varepsilon_{\sigma}}\|\sigma_{3}(\cdot)\|_{\infty}^{2}(\|\sigma_{1}(\cdot)\|_{\infty}^{2}+\|\sigma_{2}(\cdot)\|_{\infty}^{2})\right)\phi_{2}(t,\Theta_{1},\Theta_{2},\mathcal{L}(\Theta_{1}),\mathcal{L}(\Theta_{2})),\\
         & =  l_{\sigma}\mathbb{E}[|X_{1}-X_{2}|^{2}]+\left(\|\sigma_{3}(\cdot)\|_{\infty}^{2}+\frac{1}{4\varepsilon_{\sigma}}\|\sigma_{3}(\cdot)\|_{\infty}^{2}(\|\sigma_{1}(\cdot)\|_{\infty}^{2}+\|\sigma_{2}(\cdot)\|_{\infty}^{2})\right)\phi_{2}(t,\Theta_{1},\Theta_{2},\mathcal{L}(\Theta_{1}),\mathcal{L}(\Theta_{2})),
    \end{aligned}
\end{equation}
where
$$
l_{\sigma} =  \left(\|\sigma_{1}(\cdot)\|_{\infty}+\|\sigma_{2}(\cdot)\|_{\infty}\right)^{2}+\varepsilon_{\sigma}.
$$
As for coefficient function $B$, we have
\begin{equation}\label{xkppax}
\begin{aligned}
    &\quad  \mathbb{E}\left[\left\langle B\left(t, \Theta_1, \mathcal{L}(\Theta_{1})\right)-B\left(t, \Theta_2, \mathcal{L}(\Theta_{2})\right), X_1-X_2\right\rangle\right] \\
    &= \mathbb{E}[\langle b_{1}(t)(X_{1}-X_{2})+b_{2}(t)(\mathbb{E}[X_{1}]-\mathbb{E}[X_{2}])+b_{3}(t)(\alpha_{1}-\alpha_{2}), X_{1}-X_{2} \rangle] \\
    &\leq\left(\frac{1}{2}\sup_{t\in[0,\infty)}\lambda_{\max} (b_{1}(t)+b_{1}(t)^{\top})+\|b_{2}(\cdot)\|_{\infty}+\varepsilon_{1}\right)\mathbb{E}[|X_{1}-X_{2}|^{2}]\\
    &\quad \quad+\frac{\|b_{3}(\cdot)\|_{\infty}}{4\varepsilon_{1}}\phi_{2}(t,\Theta_{1},\Theta_{2},\mathcal{L}(\Theta_{1}),\mathcal{L}(\Theta_{2}))\\
    &\leq -\kappa_{x}\mathbb{E}[|X_{1}-X_{2}|^{2}]+\frac{\|b_{3}(\cdot)\|_{\infty}}{4\varepsilon_{1}}\phi_{2}(t,\Theta_{1},\Theta_{2},\mathcal{L}(\Theta_{1}),\mathcal{L}(\Theta_{2})),
\end{aligned}
\end{equation}
where the $\kappa_{x}$ satisfying
\begin{equation}\label{kappax}
     \kappa_{x}<-\frac{1}{2}\sup_{t\in[0,\infty)}\lambda_{\max} (b_{1}(t)+b_{1}(t)^{\top})-\|b_{2}(\cdot)\|_{\infty},
\end{equation}
and the condition \eqref{kappax} ensures the existence of $\varepsilon_{1}>0 $ such that last inequality of \eqref{xkppax} holds.\\ 
Moreover, by the Lipschitz property of $\partial_{x}f$ and $\partial_{\mu}f$, for any $\varepsilon_{z}>0$, we have
     \begin{align}\label{ykppay}
        & \quad \mathbb{E}\left[\langle F(t,\Theta_{1},\mathcal{L}(\Theta_{1}))-F(t,\Theta_{2},\mathcal{L}(\Theta_{2})), Y_1-Y_2\rangle\right] \notag \\ 
        & =  \mathbb{E}\Bigg[\Big\langle \partial_x f\left(t, X_2, \mathcal{L}(X_{2}),\alpha_{2}\right)-\partial_x f\left(t, X_1, \mathcal{L}(X_{1}),\alpha_{1}\right)+b_{1}(t) (Y_{2}-Y_{1})+b_2(t) (\mathbb{E}\left[Y_2\right]-\mathbb{E}\left[Y_1\right]) \notag\\
        & \quad +2K(Y_{2}-Y_{1})+\tilde{\mathbb{E}}\big[\partial_\mu f(t, \tilde{X}_2, \mathcal{L}(X_{2}), \tilde{\alpha}_{2})\left(X_2\right)\big]-\tilde{\mathbb{E}}[\partial_\mu f(t, \tilde{X}_1, \mathcal{L}(X_{1}), \tilde{\alpha}_{1})\left(X_1\right)] \notag\\
        &\quad +\sigma_{1}(t)(Z_{1}-Z_{2})+\sigma_{2}(t)(\mathbb{E}[Z_{1}]-\mathbb{E}[Z_{2}]),Y_{1}-Y_{2}\Big\rangle \Bigg]  \notag\\
       &  \geq  -\left(\frac{1}{2}\sup_{t\in[0,\infty)}\lambda_{\max} (b_{1}(t)+b_{1}(t)^{\top})+\|b_{2}(\cdot)\|_{\infty}+2K+7\varepsilon_{2}\right)\mathbb{E}[|Y_{1}-Y_{2}|^{2}] \notag\\
       &\quad -\frac{\left(\|\sigma_{1}(\cdot)\|_{\infty}+\|\sigma_{2}(\cdot)\|_{\infty}\right)^{2}+\varepsilon_{z}}{2}\mathbb{E}[|Y_{1}-Y_{2}|^{2}] -\frac{\left(\|\sigma_{1}(\cdot)\|_{\infty}+\|\sigma_{2}(\cdot)\|_{\infty}\right)^{2}}{2\left(\left(\|\sigma_{1}(\cdot)\|_{\infty}+\|\sigma_{2}(\cdot)\|_{\infty}\right)^{2}+\varepsilon_{z}\right)} \mathbb{E}\left[|Z_{1}-Z_{2}|^{2}\right] \notag\\
        & \quad-\frac{5\tilde{L}^{2}}{4\varepsilon_{2}}\mathbb{E}[|X_{1}-X_{2}|^{2}]-\frac{2\tilde{L}^{2}}{4\varepsilon_{2}}\phi_{2}(t,\Theta_{1},\Theta_{2},\mathcal{L}(\Theta_{1}),\mathcal{L}(\Theta_{2})) \notag\\
       & \geq -\left(\kappa_{y}+\frac{l_{z}}{2}\right)\mathbb{E}[|Y_{1}-Y_{2}|^{2}]-\frac{\gamma}{2} \mathbb{E}\left[|Z_{1}-Z_{2}|^{2}\right] -\frac{5\tilde{L}^{2}}{4\varepsilon_{2}}\mathbb{E}[|X_{1}-X_{2}|^{2}]\notag\\
       &\quad \quad -\frac{2\tilde{L}^{2}}{4\varepsilon_{2}}\phi_{2}(t,\Theta_{1},\Theta_{2},\mathcal{L}(\Theta_{1}),\mathcal{L}(\Theta_{2})),
        \end{align}   
where 
$$ l_{z} = \left(\|\sigma_{1}(\cdot)\|_{\infty}+\|\sigma_{2}(\cdot)\|_{\infty}\right)^{2}+\varepsilon_{z},\quad \gamma = \frac{\left(\|\sigma_{1}(\cdot)\|_{\infty}+\|\sigma_{2}(\cdot)\|_{\infty}\right)^{2}}{\left(\left(\|\sigma_{1}(\cdot)\|_{\infty}+\|\sigma_{2}(\cdot)\|_{\infty}\right)^{2}+\varepsilon_{z}\right)}
$$ 
satisfying $0<\gamma<1$ and $\kappa_{y}$ satisfying
\begin{equation}\label{kappay}
    \kappa_{y}>\frac{1}{2}\sup_{t\in[0,\infty)}\lambda_{\max} (b_{1}(t)+b_{1}(t)^{\top})+\|b_{2}(\cdot)\|_{\infty}+2K,
\end{equation}
and the condition \eqref{kappay} ensures the existence of $\varepsilon_{2}>0$ such that last inequality of \eqref{ykppay} holds.\\
In fact, for any $K$ satisfying condition \eqref{b1b2condition}, we can find $\delta>0$ such that
$$
K=-\frac{1}{2}\sup_{t\in[0,\infty)}\lambda_{\max} (b_{1}(t)+b_{1}(t)^{\top}) -\|b_{2}(\cdot)\|_{\infty}-\frac{(\|\sigma_{1}(\cdot)\|_{\infty}+\|\sigma_{2}(\cdot)\|_{\infty})^{2}}{2}-\delta.
$$
Let
$$
\kappa_{x} = -\frac{1}{2}\sup_{t\in[0,\infty)}\lambda_{\max} (b_{1}(t)+b_{1}(t)^{\top})-\|b_{2}(\cdot)\|_{\infty} -\frac{\delta}{2},
$$
and 
$$
\kappa_{y} = \frac{1}{2}\sup_{t\in[0,\infty)}\lambda_{\max} (b_{1}(t)+b_{1}(t)^{\top})+\|b_{2}(\cdot)\|_{\infty}+2K+\frac{\delta}{2},
$$
which satisfy \eqref{kappax} and \eqref{kappay}, and we have
$$
K = \frac{\kappa_{x}+\kappa_{y}}{2},
$$
and
\begin{equation}
\begin{aligned}
    \kappa_{x}-\kappa_{y} & = -\sup_{t\in[0,\infty)}\lambda_{\max} (b_{1}(t)+b_{1}(t)^{\top})-2\|b_{2}(\cdot)\|_{\infty}-2K-\delta\\
    & = -\sup_{t\in[0,\infty)}\lambda_{\max} (b_{1}(t)+b_{1}(t)^{\top})-2\|b_{2}(\cdot)\|_{\infty}-\delta\\
    &\quad -2\left(-\frac{1}{2}\sup_{t\in[0,\infty)}\lambda_{\max} (b_{1}(t)+b_{1}(t)^{\top}) -\|b_{2}(\cdot)\|_{\infty}-\frac{(\|\sigma_{1}(\cdot)\|_{\infty}+\|\sigma_{2}(\cdot)\|_{\infty})^{2}}{2}-\delta\right)\\
    & = (\|\sigma_{1}(\cdot)\|_{\infty}+\|\sigma_{2}(\cdot)\|_{\infty})^{2}+\delta\\
    & > (\|\sigma_{1}(\cdot)\|_{\infty}+\|\sigma_{2}(\cdot)\|_{\infty})^{2}+\delta_{0},
\end{aligned}
\end{equation}
for any $0<\delta_{0}<\delta$.
Therefore, letting $\varepsilon_{\sigma} = \varepsilon_{z} = \delta_{0}$ in \eqref{sigma} and \eqref{ykppay}, we have shown that if the condition \eqref{b1b2condition} is satisfied, we can choose $\kappa_{x},\kappa_{y}$ satisfying \eqref{kappax} and
\eqref{kappay} respectively, such that
\begin{equation}\label{5}
\kappa_{x}-\kappa_{y}> \operatorname{max}\{l_{\sigma},l_{z}\} \quad  \text{and} \quad K = \frac{\kappa_{x}+\kappa_{y}}{2}.
\end{equation}
 The proof is finished.
\end{proof}
At the end of this section, let us recall the infinite horizon control problems studied in \cite{bayraktar2023solvability} and \cite{yu2021} and provide some remarks to make a comparison.
\begin{remark}\label{K}
Bayraktar and Zhang \cite{bayraktar2023solvability} considered an infinite horizon mean field control problem as follows.\\
Minimize the problem
\begin{equation}\label{1}
J(\alpha):=\mathbb{E}\left[\int_0^{\infty} e^{2Kt} f\left(t, X_t, \mathcal{L}\left(X_t\right), \alpha_t\right) d t\right],
\end{equation}
subject to 
\begin{equation}\label{2}
\left\{\begin{aligned}
    d X_t &=b\left(t, X_t, \mathcal{L}\left(X_t\right), \alpha_t\right) d t+\sigma d W_t, \\
X_0 &= \xi.
\end{aligned}\right.
\end{equation}
When solving the Hamiltonian system, they let $b(t, x, \mu, \alpha)=b_0(t)+b_1(t)x+\bar{b}_1(t)\bar{\mu}+b_2(t)\alpha$ and proposed the following condition
\begin{equation}\label{b}
\left|\bar{b}_1(t)\right| \leq l \quad \text{ and } -\max _t b_1(t) \geq l+K,  
\end{equation}
which can be easily verified to correspond to our condition \eqref{b1b2condition} on parameter $K$. Not only did they need \eqref{b}, they also needed to supplement another  constraint for $K$ to solve the mean field control problem \eqref{1}-\eqref{2} (see \cite[Theorem 3.1]{bayraktar2023solvability}). Thus, compared with \cite{bayraktar2023solvability}, we weaken the conditions on the values of parameter $K$ and extend the results to a larger space.
\end{remark}
\begin{remark}\label{S}
    Wei and Yu \cite{yu2021} studied an infinite horizon LQ optimal control problems as follows.
    Minimize:
    \begin{equation}\label{LQ1}
J(\alpha)=\frac{1}{2} \mathbb{E} \int_0^{\infty} e^{2 K s}[\langle Q(s) x(s), x(s)\rangle+2\langle S(s) x(s), \alpha(s)\rangle+\langle R(s) \alpha(s), \alpha(s)\rangle] d s,
\end{equation}
subject to
\begin{equation}{\label{LQ2}}
\left\{\begin{aligned}
 d x(s) &=[A(s) x(s)+B(s) \alpha(s)]ds+\left[C(s) x(s)+D(s) \alpha(s)\right]d W_{s},\quad  s \in[0, \infty), \\
x(0) & =x.
\end{aligned}\right.
\end{equation}
They pointed out that if $S(\cdot)$ is a nonzero matrix, the values of parameter $K$ will be adjusted accordingly. It is obvious that the optimization problem \eqref{minimization} covers LQ problem \eqref{LQ1}-\eqref{LQ2}. Thus it follows from Theorem \ref{theorem mfc solvability} that if there exists a constant $\lambda>0$ such that $$
Q(\cdot)-S(\cdot)^{\top} R(\cdot)^{-1} S(\cdot)- \lambda I \geq 0,
$$ and  
$$
K <-\frac{1}{2}\sup_{t\in[0,\infty)}\lambda_{\max} (A(t)+A(t)^{\top}) -\frac{\|C(\cdot)\|_{\infty}^{2}}{2},
$$
the Hamiltonian system corresponding to the LQ problem \eqref{LQ1}-\eqref{LQ2} admits a unique solution $(X,Y,Z)\in L^{2,K}_{\mathbb{F}}(0,\infty;\mathbb{R}^{n+n+n\times d})$. Therefore, we can obtain the same solvability results for the LQ problem \eqref{LQ1}-\eqref{LQ2} whether the cross term coefficient $S(\cdot)$ is equal to zero or not. The root causing
this different phenomenon is that the values of $S(\cdot )$ will directly affect the monotonicity condition proposed in \cite{yu2021}, but it will not affect our condition since it only appears in optimal control and  we do not need to consider its monotonicity under our condition. 
\end{remark}
\bibliographystyle{siam}
\bibliography{bibliography}

\begin{thebibliography}{10}

\bibitem{Andersson2011}
{\sc D.~Andersson and B.~Djehiche}, {\em A maximum principle for {SDE}s of
  mean-field type}, Appl. Math. Optim., 63 (2011), pp.~341--356.

\bibitem{antonelli1993backward}
{\sc F.~Antonelli}, {\em Backward-forward stochastic differential equations},
  The Annals of Applied Probability,  (1993), pp.~777--793.

\bibitem{2018DPP}
{\sc E.~Bayraktar, A.~Cosso, and H.~Pham}, {\em Randomized dynamic programming
  principle and {F}eynman-{K}ac representation for optimal control of
  {M}c{K}ean-{V}lasov dynamics}, Trans. Amer. Math. Soc., 370 (2018),
  pp.~2115--2160.

\bibitem{bayraktar2023solvability}
{\sc E.~Bayraktar and X.~Zhang}, {\em Solvability of infinite horizon
  {M}c{K}ean-{V}lasov {FBSDE}s in mean field control problems and games}, Appl.
  Math. Optim., 87 (2023), pp.~Paper No. 13, 26.

\bibitem{bensoussan2015well}
{\sc A.~Bensoussan, S.~C.~P. Yam, and Z.~Zhang}, {\em Well-posedness of
  mean-field type forward-backward stochastic differential equations},
  Stochastic Process. Appl., 125 (2015), pp.~3327--3354.

\bibitem{lijuan}
{\sc R.~Buckdahn, B.~Djehiche, and J.~Li}, {\em A general stochastic maximum
  principle for {SDE}s of mean-field type}, Appl. Math. Optim., 64 (2011),
  pp.~197--216.

\bibitem{carmonamffbsde}
{\sc R.~Carmona and F.~Delarue}, {\em Mean field forward-backward stochastic
  differential equations}, Electron. Commun. Probab., 18 (2013), pp.~no. 68,
  15.

\bibitem{carmonamkfbsde}
{\sc R.~Carmona and F.~Delarue}, {\em Forward-backward stochastic differential
  equations and controlled {M}c{K}ean-{V}lasov dynamics}, Ann. Probab., 43
  (2015), pp.~2647--2700.

\bibitem{meanfield2018book1}
{\sc R.~Carmona and F.~Delarue}, {\em Probabilistic theory of mean field games
  with applications. {I}}, vol.~83 of Probability Theory and Stochastic
  Modelling, Springer, Cham, 2018.
\newblock Mean field FBSDEs, control, and games.

\bibitem{fromm2013existence}
{\sc A.~Fromm and P.~Imkeller}, {\em Existence, uniqueness and regularity of
  decoupling fields to multidimensional fully coupled fbsdes}, arXiv preprint
  arXiv:1310.0499,  (2013).

\bibitem{hu1995}
{\sc Y.~Hu and S.~Peng}, {\em Solution of forward-backward stochastic
  differential equations}, Probab. Theory Related Fields, 103 (1995),
  pp.~273--283.

\bibitem{hua2022unified}
{\sc T.~Hua and P.~Luo}, {\em A unified approach to global solvability for
  fbsdes with diagonal generators}, arXiv preprint arXiv:2211.00913,  (2022).

\bibitem{hua2023well}
{\sc T.~Hua and P.~Luo}, {\em Well-posedness for a class of mean field type
  fbsdes and classical solutions of related master equations}, arXiv preprint
  arXiv:2307.11536,  (2023).

\bibitem{ma1994}
{\sc J.~Ma, P.~Protter, and J.~M. Yong}, {\em Solving forward-backward
  stochastic differential equations explicitly---a four step scheme}, Probab.
  Theory Related Fields, 98 (1994), pp.~339--359.

\bibitem{ma2015}
{\sc J.~Ma, Z.~Wu, D.~Zhang, and J.~Zhang}, {\em On well-posedness of
  forward-backward {SDE}s---a unified approach}, Ann. Appl. Probab., 25 (2015),
  pp.~2168--2214.

\bibitem{mabook2007}
{\sc J.~Ma and J.~Yong}, {\em Forward-backward stochastic differential
  equations and their applications}, vol.~1702 of Lecture Notes in Mathematics,
  Springer-Verlag, Berlin, 1999.

\bibitem{tang1999}
{\sc E.~Pardoux and S.~Tang}, {\em Forward-backward stochastic differential
  equations and quasilinear parabolic {PDE}s}, Probab. Theory Related Fields,
  114 (1999), pp.~123--150.

\bibitem{peng2000}
{\sc S.~Peng and Y.~Shi}, {\em Infinite horizon forward-backward stochastic
  differential equations}, Stochastic Process. Appl., 85 (2000), pp.~75--92.

\bibitem{peng1999fully}
{\sc S.~Peng and Z.~Wu}, {\em Fully coupled forward-backward stochastic
  differential equations and applications to optimal control}, SIAM J. Control
  Optim., 37 (1999), pp.~825--843.

\bibitem{Pham2017dpp}
{\sc H.~Pham and X.~Wei}, {\em Dynamic programming for optimal control of
  stochastic {M}c{K}ean-{V}lasov dynamics}, SIAM J. Control Optim., 55 (2017),
  pp.~1069--1101.

\bibitem{pham2018dpp}
{\sc H.~Pham and X.~Wei}, {\em Bellman equation and viscosity solutions for
  mean-field stochastic control problem}, ESAIM Control Optim. Calc. Var., 24
  (2018), pp.~437--461.

\bibitem{reisinger2020path}
{\sc C.~Reisinger, W.~Stockinger, and Y.~Zhang}, {\em {Path regularity of
  coupled McKean-Vlasov FBSDEs}}, arXiv preprint arXiv:2011.06664,  (2020).

\bibitem{shi2020}
{\sc Y.~Shi and H.~Zhao}, {\em Forward-backward stochastic differential
  equations on infinite horizon and quasilinear elliptic {PDE}s}, J. Math.
  Anal. Appl., 485 (2020), pp.~123791, 19.

\bibitem{wei2023infinite}
{\sc Q.~Wei, Y.~Xu, and Z.~Yu}, {\em {Infinite Horizon Mean-Field Linear
  Quadratic Optimal Control Problems with Jumps and the related Hamiltonian
  Systems}}, arXiv preprint arXiv:2311.07018,  (2023).

\bibitem{yu2021}
{\sc Q.~Wei and Z.~Yu}, {\em Infinite horizon forward-backward {SDE}s and
  open-loop optimal controls for stochastic linear-quadratic problems with
  random coefficients}, SIAM J. Control Optim., 59 (2021), pp.~2594--2623.

\bibitem{wu2003fully}
{\sc Z.~Wu}, {\em Fully coupled {FBSDE} with {B}rownian motion and {P}oisson
  process in stopping time duration}, J. Aust. Math. Soc., 74 (2003),
  pp.~249--266.

\bibitem{yin2008solutions}
{\sc J.~Yin}, {\em On solutions of a class of infinite horizon {FBSDE}s},
  Statist. Probab. Lett., 78 (2008), pp.~2412--2419.

\bibitem{yin2011forward}
{\sc J.~Yin}, {\em Forward-backward {SDE}s with random terminal time and
  applications to pricing special {E}uropean-type options for a large
  investor}, Bull. Sci. Math., 135 (2011), pp.~883--895.

\bibitem{yong1997}
{\sc J.~Yong}, {\em Finding adapted solutions of forward-backward stochastic
  differential equations: method of continuation}, Probab. Theory Related
  Fields, 107 (1997), pp.~537--572.

\bibitem{yong2010}
{\sc J.~Yong}, {\em Forward-backward stochastic differential equations with
  mixed initial-terminal conditions}, Trans. Amer. Math. Soc., 362 (2010),
  pp.~1047--1096.

\bibitem{yu2017}
{\sc Z.~Yu}, {\em Infinite horizon jump-diffusion forward-backward stochastic
  differential equations and their application to backward linear-quadratic
  problems}, ESAIM Control Optim. Calc. Var., 23 (2017), pp.~1331--1359.

\end{thebibliography}
\end{document}